\documentclass[11pt,twoside,reqno]{amsart} 

\usepackage[utf8]{inputenc}
\usepackage[T1]{fontenc}

\usepackage{enumitem}
\usepackage[bookmarks = true, pagebackref]{hyperref}
\hypersetup{  
	colorlinks=true,
	linkcolor=blue,
	citecolor=green,
	urlcolor=magenta,
}
\usepackage[all]{xy}
\usepackage{amsmath, amsthm, amssymb, bbm, mathrsfs,mathtools}
\usepackage[alphabetic]{amsrefs} 

\usepackage{varwidth}

\newtheorem{theorem}{Theorem}[section]
\newtheorem*{theorem*}{Theorem}
\newtheorem{lemma}[theorem]{Lemma}
\newtheorem{proposition}[theorem]{Proposition}
\newtheorem{corollary}[theorem]{Corollary}

\theoremstyle{definition}
\newtheorem{remark}[theorem]{Remark}

\newtheorem{definition}[theorem]{Definition}
\newtheorem{example}[theorem]{Example}

\numberwithin{equation}{section}

\makeatletter

\def\@buildmath#1{%
  \expandafter\def\csname mb#1\endcsname{\mathbb{#1}}%
  \expandafter\def\csname mc#1\endcsname{\mathcal{#1}}%
  \expandafter\def\csname mf#1\endcsname{\mathfrak{#1}}%
  \expandafter\def\csname rm#1\endcsname{\mathrm{#1}}%
  \expandafter\def\csname bf#1\endcsname{\mathbf{#1}}%
  \expandafter\def\csname sf#1\endcsname{\mathsf{#1}}%
}

\def\@buildmathletters#1{%
  \ifx#1\relax
    \expandafter\@gobble
  \else
    \@buildmath{#1}%
    \expandafter\@buildmathletters
  \fi
}

\@buildmathletters ABCDEFGHIJKLMNOPQRSTUVWXYZabcdefghijklmnopqrstuvwxyz\relax

\makeatother

\newcommand{\Hom}{\operatorname{Hom}}

\newcommand{\End}{\operatorname{End}}

\newcommand{\Lie}{\operatorname{Lie}}
\newcommand{\Spec}{\operatorname{Spec}}

\newcommand{\GL}{\operatorname{GL}}

\newcommand{\GU}{\operatorname{GU}}

\newcommand{\id}{\mathrm{id}}

\newcommand{\Stab}{\operatorname{Stab}}

\newcommand{\loc}{\mathrm{loc}}

\newcommand{\tr}{\mathrm{tr}}

\newcommand{\Sh}{\mathrm{Sh}}

\newcommand{\Res}{\operatorname{Res}}

\newcommand{\Sym}{\operatorname{Sym}}
\newcommand{\IGr}{\mathrm{IGr}}
\newcommand{\Gr}{\mathrm{Gr}}
\newcommand{\Fl}{\mathrm{Fl}}
\newcommand{\adj}{\mathrm{adj}}
\newcommand{\im}{\operatorname{im}}
\newcommand{\ch}{\operatorname{ch}}
\newcommand{\coker}{\operatorname{coker}}

\newcommand{\wt}{\widetilde}

\newcommand{\lr}{\longrightarrow}
\newcommand{\ov}{\overline}

\title{Pappas's conjecture on ramified unitary local models}
	
\author{Yu LUO}
\address{Massachusetts Institute of Technology, Department of Mathematics, 77 Massachusetts Avenue, Cambridge, MA 02139, USA}
\email{yuluo@mit.edu}
	
\date{\today}
	
\begin{document}
\begin{abstract}
We prove that the wedge local model for a ramified unitary group of signature $(r,s)$ ($r>s$) at self-dual level is flat when $p\geq 2s-1$, confirming Pappas's conjecture in this case. The proof first reduces the problem to determining the defining equations of a nilpotent orbit closure of self-adjoint matrices, and then to computing the global sections of the corresponding Kempf resolution. The computation is carried out by reducing to the type $A$ case using the Koszul complex.
\end{abstract}

\maketitle
\tableofcontents
\section{Introduction}
\subsection{Background}
Shimura varieties, developed by Deligne \cite{Deligne71,Deligne79} in the 1970s, generalize classical objects such as modular curves, moduli spaces of principally polarized abelian varieties and Hilbert modular varieties. 
Roughly speaking, they are algebraic varieties that carry both Hecke symmetries and Galois symmetries, making them interesting and important objects in arithmetic geometry. In many cases, their arithmetic properties are encoded in the reduction to positive characteristic $p>0$ and have contributed to spectacular developments in number theory and arithmetic geometry in the past decades.

Local models are flat projective schemes over complete discrete valuation rings of characteristic $(0,p)$ that are designed to model the singularities in the reduction modulo $p$ of Shimura varieties with (quasi-)parahoric level structure. 
The theory was first formalized for Shimura varieties of PEL type in the book of Rapoport--Zink \cite{RZ96}. Since then, much research has been devoted to local models and their relation to Shimura varieties. A breakthrough is due to Pappas and Zhu \cite{PZ13,Zhu_coherent}, who gave a purely group-theoretic construction of flat local models. Later, Scholze and Weinstein \cite{SW20} proposed a more canonical approach using $p$-adic geometry, which was recently completed by \cite{AGLR,GL_tubular,FHLR,CL26}.

However, as mentioned in \cite{AGLR}, the group-theoretic approach does not provide a moduli interpretation of local models, which remains an interesting and difficult problem that may not have a uniform solution. Explicit moduli descriptions of local models, and consequently of the corresponding integral models of Shimura varieties, are important for arithmetic intersection problems, including Kudla--Rapoport conjecture \cite{KR11,KR14,LZ22,LL22,Yao_KR,HLSY,Luo_KR}, arithmetic inner product formula \cite{LL21,LL22,Chi_AIPF}, arithmetic fundamental lemma \cite{WZhang_AFL,WZhang_AFL-proof,Mihatsch-Zhang}, and arithmetic Gan--Gross--Prasad conjecture \cite{RSZ_regular,DZ}. In all of these problems, unitary Shimura varieties, unitary local models, and their moduli descriptions play an important role.

Let $F/F_0$ be a quadratic extension and let $V$ be a hermitian space. For any pair $(r,s)$ with $r+s=\dim V$, one can consider unitary Shimura varieties and local models of signature $(r,s)$. When the extension $F/F_0$ is split or inert, the moduli problem proposed in \cite{RZ96} is known to be flat by \cite{Gortz01}. However, when $F/F_0$ is ramified, finding a moduli description becomes a much more subtle problem. The case of signature $(r,s)=(n-1,1)$ with self-dual level structure was first considered by Pappas \cite{Pappas2000}. 
Since then, subsequent work includes \cite{PR2008,PR09,Arzdorf,Richarz13,Smithling_odd,Smithling_even,RSZ_regular,Smithgling15}. This case was recently settled by the author in \cite{Luo_LM}. For signatures beyond $(n-1,1)$, recent work on integral models includes \cite{Kramer,Zachos24,ZZI,ZZII,BH25,BZZ}. However, little was known about moduli interpretations of the \emph{canonical} integral models in these signatures, except for the case of signature $(4,2)$ considered in \cite{Trentin}.
The main goal of this paper is to take a first step toward a moduli interpretation of the integral models of ramified unitary Shimura varieties for general signature $(r,s)$.

\subsection{Main result}
Let $F/F_0$ be a ramified quadratic extension of $p$-adic fields with $p>2$, and let $\Lambda$ be a self-dual lattice in an $n$-dimensional hermitian space.

For integers $(r,s)$ with $r+s=n$, let $\mbM_{(r,s)}^\wedge$ be the wedge local model of signature $(r,s)$, we refer the reader to Definition \ref{def:wedgedLM} for a precise definition. Roughly speaking, it parameterizes filtrations of the hermitian lattice that are compatible with the $O_F$-action and are modeled on the Hodge filtrations of abelian schemes. 
The local model is the Zariski closure of the generic fiber inside the wedge local model:
$$
\mbM^\wedge_{(r,s),\eta}\hookrightarrow\mbM_{(r,s)}^{\loc}\hookrightarrow \mbM_{(r,s)}^{\wedge}.
$$
The wedge moduli functor gives the moduli interpretation precisely when the closed immersion $\mbM_{(r,s)}^\loc\hookrightarrow \mbM_{(r,s)}^{\wedge}$ is an isomorphism. The main result of the paper is the following:
\begin{theorem}(Theorem \ref{thm:Pappas-rs})
Suppose $p\neq 2$. Let $(r,s)$ be such that $r+s=n$ and $r>s$, and assume $p\geq 2s-1$. For every ramified quadratic extension $F/F_0$ and every self-dual lattice $\Lambda$ as above, we have
$$
\mbM_{(r,s)}^\loc=\mbM_{(r,s)}^{\wedge}.
$$
In particular, the wedge local model is flat and has a geometrically reduced special fiber. This proves a conjecture of Pappas, stated implicitly in \cite[\S 4.16]{Pappas2000}.
\end{theorem}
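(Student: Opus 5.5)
The strategy follows the abstract: reduce flatness of $\mbM^\wedge_{(r,s)}$ to a statement about a nilpotent orbit closure, and prove that statement by computing global sections on a Kempf-type resolution.

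\textbf{Step 1: reduction to the worst point.} Both $\mbM^\loc_{(r,s)}$ and $\mbM^\wedge_{(r,s)}$ are projective over $O_{\breve F_0}$ with the same generic fiber. The special fiber of $\mbM^\wedge$ carries an action of the reductive quotient of the special fiber of the parahoric group scheme, which is an orthogonal or symplectic group $H$ over $k$, since $\Lambda$ is self-dual and the extension is ramified. Every $H$-orbit contains the worst point $\mathcal{F}_0=\pi\Lambda/\pi^2\Lambda\subset \Lambda/\pi^2\Lambda$ in its closure. It therefore suffices to show that the equality $\mbM^\loc=\mbM^\wedge$ holds on an open affine neighborhood $U$ of $\mathcal{F}_0$.

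\textbf{Step 2: local equations.} On $U$, write the filtration as the graph of a matrix $X$ and translate the conditions of Definition \ref{def:wedgedLM} into explicit equations. These are:
\begin{itemize}
\item $(\pi\otimes 1-1\otimes\pi)\mathcal F=0$ together with isotropy, which makes $X$ self-adjoint for the form $\langle\,,\rangle$ and gives $X^2=\pi_0$;
\item the wedge condition, which says $\wedge^{s+1}(X-\sqrt{\pi_0})=0$ and $\wedge^{r+1}(X+\sqrt{\pi_0})=0$ after base change, or in Pappas's integral formulation the vanishing of the corresponding wedge powers of $\pi\otimes1\pm1\otimes\pi$.
\end{itemize}
On the special fiber these equations become: $X$ self-adjoint, $X^2=0$ and $\wedge^{s+1}X=0$. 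So the special fiber of $U\cap\mbM^\wedge$ is the scheme $Z_s$ cut out by these equations. It is set-theoretically the closure $\ov{\mathcal O}$ of the self-adjoint nilpotent orbit of rank $s$ with square zero; this is a symplectic or orthogonal nilpotent orbit closure via $X\mapsto JX$.

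\textbf{Step 3: the flatness criterion.} The generic fiber has dimension equal to $\dim\ov{\mathcal O}$, and the closure of the generic fiber surjects onto $\ov{\mathcal O}$ since $r>s$. Hence, by the standard argument, $\mbM^\loc=\mbM^\wedge$ near $\mathcal F_0$ follows once $Z_s$ is reduced, i.e.\ its ideal is the radical ideal of $\ov{\mathcal O}$. More precisely, the argument runs as follows. Flatness of the wedge scheme comes from a lifting: the special fiber of $\mbM^\loc$ is contained in $Z_s$, it has the same dimension, and it contains the reduced, irreducible $\ov{\mathcal O}$. Then $Z_s$ reduced forces equality of the special fibers. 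Nakayama on the $\pi$-torsion ideal then gives $\mbM^\loc=\mbM^\wedge$, and also the reducedness claim. The condition $r>s$ is what ensures the orbit is irreducible and that $s<n/2$ fits the Lagrangian bound.

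\textbf{Step 4: reducedness of $Z_s$ (the main obstacle).} I would take the Kempf collapsing
$$Y=\{(W,X): W\in\IGr(s),\ \im X\subset W\subset \ker X\},$$
a vector bundle over the isotropic Grassmannian given by $\Sym^2$ or $\wedge^2$ of the tautological bundle. Then $p_*\mathcal O_Y=\mathcal O_{\ov{\mathcal O}}$, with vanishing higher cohomology, in characteristic $p$ large enough. The task is to show that the ring $H^0(Y,\mathcal O_Y)=\bigoplus_d H^0(\IGr,\Sym^d(\mathcal Q^\vee\text{-type}))$ is generated by the entries of $X$ modulo exactly the ideal generated by $X^2$ and $\wedge^{s+1}X$. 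Following the abstract, I would reduce to type $A$ in three moves:
\begin{itemize}
\item embed $\IGr(s)$ into $\Gr(s)$ and resolve its structure sheaf by the Koszul complex of the section of $\wedge^2$ or $\Sym^2$ of the dual tautological bundle that cuts it out;
\item use known type $A$ results for the determinantal varieties $\{\operatorname{rk}\le s,\ X^2=0\}$ (De Concini--Procesi, or Kempf--Weyman in good characteristic) to compute the cohomology of each Koszul term twisted by $\Sym^\bullet$;
\item chase the spectral sequence.
\end{itemize}
The hypothesis $p\ge 2s-1$ is where Bott vanishing and the good-filtration arguments on the Koszul terms $\wedge^i(\wedge^2\mathcal R)$ of weight up to about $2s$ would be needed. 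I expect this cohomology computation, namely the control of the characteristic-$p$ vanishing and of the surjectivity onto the quotient ring, to be the hardest part.
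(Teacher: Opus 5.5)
\textbf{Gap 1: the local equations in Step 2 are wrong.} Your Steps 1--3 follow the paper's reduction: topological flatness, then reducedness of the special fiber of the chart at the worst point. But Step 2 drops the Kottwitz condition, and with it the equation $\tr(X)=0$. (Also, $(\pi\otimes 1-1\otimes\pi)\mcF=0$ is not a condition of the moduli problem; it would force signature $(n,0)$. The relation $X^2=\pi_0$ simply comes from $\Pi^2=\pi_0$.)

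This breaks Step 4. For $s\geq 1$, the ideal generated by the entries of $X^2$ and the $(s+1)\times(s+1)$ minors is homogeneous and generated in degrees $\geq 2$. So it does not contain the nonzero linear form $\tr(X)$ on $N$. Yet $\tr(X)$ vanishes on $\ov{\mathcal O}$, since its points are nilpotent. Hence your $Z_s$ is \emph{not} reduced, and the statement you propose to prove in Step 4 is false. The actual chart is $\Spec k[X]/(X-X^t,\tr(X),X^2,\bigwedge^{s+1}X)$. The Kottwitz condition contributes $\tr(X)$, and Newton's identities (which need $p>s$) show that its other coefficients are redundant modulo $\tr(X)$, $X^2$ and $I_{s+1}(X)$.

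\textbf{Gap 2: Step 4 lacks the idea that actually produces the equations.} Consider the Koszul resolution of $\IGr\subset\Gr$. Its terms are $\bigwedge^j(\Sym^2\mcR)$, not $\bigwedge^j(\bigwedge^2\mcR)$, because the form is symmetric at self-dual level. Combine it with the vanishing of $H^{>0}(\Gr,\bigwedge^j\mcS\otimes\Sym^d\mcS^\vee)$. This vanishing is where $p\geq 2s-1$ enters: the weights of $\bigwedge^j(\Sym^2\bfR)$ satisfy $\chi_a-\chi_b\leq s$, so there is a good filtration with factors satisfying $\gamma_s\geq-(s+1)\geq-(n-s)$.

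All this yields only $k[N_u]=k[N_s]/\alpha(\bfM)$, where $\bfM=\bigoplus_d H^0(\Gr,\mcS\otimes\Sym^d\mcS^\vee)$. The type-$A$ input here is the symmetric determinantal variety $N_s$, with no square-zero condition, and $k[N_s]_d=H^0(\Gr,\Sym^d\mcS^\vee)$. Computing these groups as representations describes $I(N_u)$ as a representation but does not give generators, and no spectral-sequence chase does.

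The missing step is that $\bfM$ is generated over $k[N_s]$ by two kinds of elements:
\begin{itemize}
\item the identity $\tau\in\bfM_1$, with $\alpha(\tau)=\tr(X)$ (this is exactly your missing trace);
\item the elements $\Phi(\bfN^\vee)\subset\bfM_2$, where $\Phi\colon U\mapsto[X\mapsto XUX]$, whose images under $\alpha$ are the entries of $X^2$.
\end{itemize}
The paper proves this by a $\GL_s$ computation, in four steps.
\begin{itemize}
\item $\ker\Psi_d\cong\bfP_{d-s-1}\otimes(\det\bfR^\vee)^{\otimes 2}$, embedded via $(\det,-\adj)$. Hence $\im\Psi_d$ has a good filtration with $p$-independent character.
\item In characteristic $0$, Schur--Weyl duality identifies $\im\Psi_d$ with the sum of the components $S_\lambda(\bfR^\vee)$ with $\lambda_s\geq 0$.
\item Consequently the factors of $\coker\Psi_d$ all satisfy $-(s+1)\leq\gamma_s<0$, so they are acyclic on $\Gr$ by Bott's theorem.
\item Finally, $\bfN^\vee\otimes\Gamma(\Gr,\Sym^{d-2}\mcS^\vee)\to\Gamma(\Gr,\mcS^\vee\otimes\Sym^{d-2}\mcS^\vee)$ is surjective, again by Bott vanishing.
\end{itemize}
Without this generation statement, your plan stops at a representation-theoretic description of $k[N_u]$ and never reaches its defining equations.
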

Through the local model diagram, this equality gives a flat canonical PEL moduli problem for unitary Shimura varieties of signature $(r,s)$ with principal polarization, see \S \ref{sec:moduliforSV} for more discussion. Here canonical is in the sense of \cite{SW20,PR-shtuka}. We note that a splitting model variant was also proposed in \cite{Kramer} and studied in \cite{BH25} for any signature. 
Our result is the first moduli description of the canonical integral model in the ramified unitary case beyond signature $(n-1,1)$ for arbitrarily large dimension.

The key algebraic result identifies, in an affine neighborhood of the worst point, the defining equations of the special fiber of the local model, using nilpotent orbit closure.

To state it, let $k$ be an algebraically closed field and let $(\bfF,q)$ now be a nondegenerate quadratic space over $k$. We identify $\bfN=\Sym^2\bfF$ with the space of $q$-self-adjoint endomorphisms of $\bfF$. In an orthonormal basis, these are symmetric matrices.
Let $N$ be the corresponding affine variety and let $N_{u}$ be the reduced closed subvariety of matrices defined by $X^2=0$ and $\mathrm{rank}(X)\leq s$. 

\begin{theorem}(Theorem \ref{thm:main-equation})
Suppose $p=0$ or $p\geq 2s-1$ ($p\neq 2$). Assume $n\geq 2s+1$. The defining ideal of $N_{u}$ in $k[N]$ is 
$$
I(N_{u})=\bigl(
\tr(X),(X^2)_{ij},I_{s+1}(X)
\bigr),
$$
where $I_{s+1}(X)$ is the ideal generated by the $(s+1)\times (s+1)$ minors of $X$.
\end{theorem}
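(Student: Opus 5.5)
We follow the strategy indicated in the abstract: resolve $N_u$ by a Kempf collapsing and compute global sections. Note first that $X^2=0$ forces $X$ nilpotent, so $\tr X=0$ on $N_u$. A self-adjoint $X$ with $X^2=0$ satisfies $\im X\subseteq\ker X=(\im X)^\perp$, so $\im X$ is isotropic. Hence $N_u$ is the image of the total space $Z$ of a vector bundle over the isotropic Grassmannian $\IGr(s,\bfF)$. Over a point $U\subseteq\bfF$ ($U$ isotropic, $\dim U=s$), the fiber of $Z$ is $\Sym^2U\subseteq\Sym^2\bfF$, i.e.\ the self-adjoint maps $\bfF\to\bfF/U^\perp\cong U^\vee\to U$. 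Since $n\ge 2s+1$, a generic element has rank exactly $s$, so $Z\to N_u$ is birational. The plan is to show that $N_u$ is normal and that the surjection $k[N]\twoheadrightarrow H^0(Z,\mcO_Z)$ has kernel equal to $J:=(\tr X,(X^2)_{ij},I_{s+1}(X))$. Indeed, $H^0(Z,\mcO_Z)=\bigoplus_d H^0(\IGr,\Sym^d(\Sym^2\mcU)^\vee)$ with $\mcU$ the tautological bundle. The containment $J\subseteq I(N_u)$ is clear. So it suffices to prove that $k[N]/J\to H^0(Z,\mcO_Z)$ is injective, i.e.\ that $k[N]/J$ and $H^0$ have the same Hilbert function (or character).

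The key steps are as follows. (1) Show that $R^iq_*\mcO_Z$ vanishes for $i>0$. This gives $k[Z]=H^0$, and via Kempf's criterion, normality and rational singularities of $N_u$. We would use the Koszul complex of the sub-bundle $\Sym^2\mcU^{\perp\perp}$ inside the trivial bundle $\Sym^2\bfF$. Concretely, we take the quotient $\xi=\Sym^2\bfF/\Sym^2\mcU$ and the resolution $\wedge^\bullet\xi\otimes\mcO_{\Sym^2\bfF}$ of $\mcO_Z$ (Weyman's geometric technique). The cohomology $H^i(\IGr,\wedge^j\xi)$ then yields a minimal free complex resolving $k[Z]$ whose degree-$1$ terms we want to identify with the generators of $J$. (2) Identify the degree-one terms $H^0$ and $H^1$ of $\wedge^\bullet\xi$. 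The group $\xi$ has a filtration with graded pieces $\Sym^2(\mcU^\perp/\mcU)$, $(\mcU^\perp/\mcU)\otimes\mcU^\vee$ type terms, and $\Sym^2\mcU^\vee$-type terms. We would compute the cohomology of their exterior powers by pushing along $\IGr(s,\bfF)$, viewed as a fibration over the type $A$ Grassmannian, or by restricting to the type $A$ picture. There Lascoux's resolution of symmetric determinantal varieties (rank $\le s$ symmetric matrices) is known, and the isotropy condition contributes the Koszul factor accounting for $X^2=0$ and $\tr X$. The expected output is that the generators are the $(s+1)$-minors (from the type $A$ part) together with $\Sym^2$-equations $X^2=0$ and the trace (from $\Sym^2(\mcU^\perp/\mcU)$, whose invariant part gives $\tr$ and whose traceless part gives $(X^2)_{ij}$).

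(3) Alternatively, and more robustly, we would prove the inequality of Hilbert functions directly. We filter $k[N]/J$ by the standard monomial or Cauchy decomposition. In characteristic $0$, $k[\Sym^2\bfF]=\bigoplus_\lambda S_{2\lambda}\bfF$ as $\GL$-modules. We would show that modulo $(X^2)_{ij},\tr$ the $\mathrm{O}(\bfF)$-module structure collapses to the harmonic parts, and modulo $I_{s+1}$ only $\lambda$ with at most $s$ rows survive. Bott's theorem on $\IGr$ gives $H^0$ as $\bigoplus_{\ell(\lambda)\le s}$ of the irreducible $\mathrm{O}$-module with highest weight $2\lambda$. In characteristic $p\ge 2s-1$ we replace irreducibles by Weyl/good filtrations. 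The bound $p\geq 2s-1$ should be exactly what makes the relevant Schur functors $\wedge^j$ and $\Sym^2$ of rank-$s$ bundles have good filtrations and Kempf vanishing hold (weights bounded by about $2s-2$).

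The main obstacle is step (2)/(3) in positive characteristic: controlling higher cohomology of the $\wedge^j\xi$ so that no extra degree-$\geq2$ generators appear, and so that the comparison holds characteristic-free for $p\ge 2s-1$. We would first attempt the Koszul reduction described in the abstract. We resolve $\mcO_{\IGr}$-modules via the Koszul complex of the section cutting $\IGr(s,\bfF)$ out of $\Gr(s,\bfF)$, namely the section of $\Sym^2\mcU^\vee$. This transports the computation to type $A$ Grassmannian cohomology, where characteristic-free results for symmetric determinantal ideals (Abeasis, Kutz, and related work) are available, and the Koszul terms $\wedge^j\Sym^2\mcU^\vee$ are the source of the restriction on $p$.
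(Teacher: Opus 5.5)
There is a genuine gap. Your outline assembles the right objects: the collapsing $Z\to N_u$ over $\IGr(s,\bfF)$, the Koszul resolution of $\IGr\subset\Gr$, and good filtrations as the source of the bound $p\geq 2s-1$. But the decisive step is never carried out. Nothing in steps (2) and (3) shows that the kernel of $k[N_s]\to H^0(Z,\mcO_Z)$ is generated by $\tr X$ and the $(X^2)_{ij}$. Sentences such as ``the expected output is\ldots'' and ``modulo $(X^2)_{ij},\tr$ the $\mathrm{O}(\bfF)$-module structure collapses to the harmonic parts'' restate the theorem rather than prove it. The characteristic-free results on symmetric determinantal rings give $k[N_s]=k[N]/I_{s+1}$, but they say nothing about the isotropy condition.

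Write $\mcR$ for your $\mcU$ and $\mcS=\Sym^2\mcR$. In the paper, the Koszul complex (whose terms are $\bigwedge^j\mcS$, not $\bigwedge^j\mcS^\vee$) together with the vanishing of $H^{>0}(\Gr,\bigwedge^j\mcS\otimes\Sym^d\mcS^\vee)$ yields a right exact sequence $\bfM\to k[N_s]\to k[N_u]\to 0$, with $\bfM_d=H^0(\Gr,\mcS\otimes\Sym^d\mcS^\vee)$. The real content is that $\bfM$ is generated over $k[N_s]$ by the identity section $\tau\in\bfM_1$ (mapping to $\tr X$) and the sections $\Phi(\lambda)\in\bfM_2$ coming from $U\mapsto[X\mapsto XUX]$ (mapping to $\lambda(X^2)$). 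Proving this needs ingredients absent from your plan:
\begin{itemize}
\item the computation $\ker\Psi_d\cong\bfP_{d-s-1}\otimes(\det\bfR^\vee)^{\otimes 2}$ via $\det$ and $\adj$, so that $\im\Psi_d$ has a good filtration with $p$-independent character;
\item the characteristic-zero identification, by Schur--Weyl duality, of $\im\Psi_d$ with the sum of the components $S_\lambda(\bfR^\vee)$ with $\lambda_s\geq 0$;
\item the consequence that $\coker\Psi_d$ has only factors $S_\gamma(\bfR^\vee)$ with $-(s+1)\leq\gamma_s<0$, which are acyclic on $\Gr$ because $n-s\geq s+1$;
\item an $H^1$-vanishing to lift sections through $\Psi_d$;
\item the surjectivity of $\bfN^\vee\otimes\Gamma(\Gr,\Sym^{d-2}\mcS^\vee)\to\Gamma(\Gr,\mcS^\vee\otimes\Sym^{d-2}\mcS^\vee)$, obtained from the Schur complex of $0\to\mcQ^\vee\to\mcF^\vee\to\mcR^\vee\to 0$.
\end{itemize}
Your heuristic placing $\tr X$ and $(X^2)_{ij}$ in the invariant and traceless parts of the single bundle $\Sym^2(\mcU^\perp/\mcU)$ is also off. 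The former is linear and the latter quadratic, so in Weyman's complex they would come from $H^0(\bigwedge^1\xi)$ and $H^1(\bigwedge^2\xi)$ respectively.

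A secondary gap concerns normality. Your Hilbert-function formulation presupposes the surjection $k[N]\twoheadrightarrow H^0(Z,\mcO_Z)$, i.e.\ normality of $N_u$, and step (1) does not deliver it in characteristic $p$. Since $Z$ is smooth and $Z\to N_u$ is proper and birational, $H^0(Z,\mcO_Z)$ is always the coordinate ring of the normalization of $N_u$. So vanishing of $R^iq_*\mcO_Z$ does not by itself give normality, and Kempf's collapsing criterion rests on characteristic-zero input. Your injectivity formulation does not need normality, since $k[N_u]\hookrightarrow H^0(Z,\mcO_Z)$ regardless. The paper gets normality from the local model (Proposition \ref{prop:normal-orbit-closure}, via the coherence conjecture and topological flatness). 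Within your own framework, the surjectivity of $k[N_s]_d=H^0(\Gr,\Sym^d\mcS^\vee)\to H^0(\IGr,\Sym^d\mcS^\vee)$ at the end of the Koszul sequence would give it directly.
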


\subsection{Proof idea}
An important observation made in \cite[\S 5.4]{PR09} is that $N_{u}$ is in fact a nilpotent orbit closure of self-adjoint matrices. The analogous problem in type $A$ was studied by \cite{Weyman_89}, and the method was further developed in \cite{Weyman_03}. 

Let $\IGr(s,\bfF)$ be the isotropic Grassmannian. Let $\mcR\subset\mcF$ be the tautological subbundle. We consider 
$$
Y_{u}\subset N\times \IGr(s,\bfF)\to \IGr(s,\bfF).
$$
Here $Y_{u}$ is the total space of $\Sym^2\mcR\subset\Sym^2\mcF$. The geometric points of $Y_{u}$ can be described as
$$
Y_{u}(k)=\{(X,R)\in N\times \IGr(s,\bfF)\mid X\bfF\subseteq R\}.
$$
In particular, every matrix $X$ occurring here has rank $\leq s$, and the projection to $N$ is a Kempf type resolution \cite{Kempf} of $N_u$
\begin{equation*}
\begin{aligned}
\xymatrix{
N_{u}&Y_{u}\ar[r]\ar[l]_-{\pi_{u}}&\IGr(s,\bfF)
}
\end{aligned}
\end{equation*}
and the ring of regular section of $N_u$ is identical to that of $Y_u$.

Since $Y_{u}$ is the total space of a vector bundle over the isotropic Grassmannian, the structure sheaf (in fact, cohomology) of $Y_{u}$ can be studied using geometric representation theory. This gives a representation-theoretic description of the ring of functions on $N_{u}$. However, extracting generators for the defining ideal from this description remains difficult, even when the ideal is known explicitly as a representation.

To proceed, we reduce the problem to the type $A$ setting. Let $\Gr(s,\bfF)$ be the usual Grassmannian. Since all bundles we consider above are pulled back from $\Gr(s,\bfF)$, our resolution fits into the following diagram:
\begin{equation*}
\begin{aligned}
\xymatrix{
Y_{u}\ar[dd]\ar@{^(->}[rr]\ar[dr]&\ar@{}[dr]|{\Box}&N\times \IGr(s,\bfF)\ar[r]\ar@{^(->}[d]\ar@{}[dr]|{\Box}&\IGr(s,\bfF)\ar@{^(->}[d]\\
&Y_s\ar@{^(->}[r]\ar[d]&N\times\Gr(s,\bfF)\ar[r]\ar[d]&\Gr(s,\bfF)\\
N_{u}\ar@{^(->}[r]&N_s\ar@{^(->}[r]&N.
}
\end{aligned}
\end{equation*}
The subvariety $Y_s$ is the total space of $\Sym^2\mcR$ (without the isotropic requirement), and $N_s$ is nothing but the symmetric determinantal variety of rank $\leq s$. Its geometric structure is well understood.

The closed immersion $\IGr(s,\bfF)\subset\Gr(s,\bfF)$ is a local complete intersection. We use this to relate cohomology on $\IGr(s,\bfF)$ to cohomology on $\Gr(s,\bfF)$ using Koszul resolution. This reduces the problem to representation theoretic calculations $\GL_s(k)$.

The reduction procedure works perfectly when $p=0$. In positive characteristic, we show that the terms of the Koszul complex admit good filtrations, which requires some care with modular representation theory.

Let us finish the section with several remarks.
\begin{remark}\label{rmk:intro}
\begin{enumerate}
\item The framework should also apply to signature $(r,r)$, although that case seems to require a different strategy.
\item In \cite[\S 5.4]{PR09}, Pappas and Rapoport also consider a nilpotent orbit closure at the $\pi$-modular level. We expect that a similar approach applies.
It would be interesting to generalize this from self-dual level structure to maximal parahoric level, as in the case of signature $(n-1,1)$ in \cite{Luo_LM}. 
\item The Kempf resolution $Y_u\to N_{u}$ considered in this paper coincides over the special fiber with (the proper transform of) the Kr\"amer resolution \cite{Kramer}. This suggests that Kr\"amer type resolutions have some canonical geometric meaning, even though they do not have a canonical definition. It would be interesting to compare this resolution with those considered in \cite{HLS_regular,ZZII,LRZ25}.
\end{enumerate}
\end{remark}

\begin{remark}
Since the initial draft of this manuscript, we have used AI to find a different strategy for proving Pappas's conjecture using invariant theory. The resulting proof appears to be substantially shorter, although it is also technically more involved. We intend to explore this method in the future.
\end{remark}

\subsection{Acknowledgments}
The idea of using representation theory to study local models was suggested by M. Rapoport and G. Pappas in 2023 (for different purposes). The idea for this paper arose during the banquet of ICM 2026. The author's travel was supported by the National Science Foundation under Grant No. 2529637. The author would like to thank M. Rapoport for helpful discussions and comments, and I. Zachos for comments on an early draft.

\subsection{AI usage}
The author used ChatGPT and Claude for general research assistance (e.g. literature searches and exploring proof strategies). 
In particular, initial proof strategies for Lemma \ref{lem:good-fil-surj} and Proposition \ref{prop:char0-calculation} were suggested by ChatGPT Astra 6.0 and Claude Opus 5.5, these strategies generalize a more naive argument that works in dimension two.

The author used Claude Opus 5.5 to review the paper and polish the writing. During this review, Claude Opus 5.5 also improved the bound in Proposition \ref{prop:wedge-good} from $p>s(s+1)/2$ to $p\geq 2s-1$, and suggested a simplification that avoids \cite[Theorem 5.1.2]{Weyman_03}, which the first draft used. The author takes full responsibility for the content of this article.

\section{Vector bundles on Grassmannians}\label{sec:weyman}
\subsection{Schur functors}
We recall the construction of Schur functors and their properties. Most notation conventions follow \cite{SSam16}.

Let $X$ be a scheme and let $\mcE$ be a vector bundle of rank $s$. 
For a partition $\lambda$ with transpose $\lambda'$, we define the \emph{Schur functor} $S_\lambda(\mcE)$ to be the image of the natural map
$$
\bigwedge^{\lambda'_1}\mcE\otimes \cdots \otimes \bigwedge^{\lambda'_{\lambda_1}}\mcE\lr
\Sym^{\lambda_1}\mcE\otimes\cdots\otimes \Sym^{\lambda_t}\mcE.
$$
This is $L_{\lambda'}(\mcE)$ in \cite[\S 2.1]{Weyman_03}. We choose this notation for better compatibility with representations.

For any dominant weight $\lambda=(\lambda_1\geq \cdots\geq \lambda_s)\in\mbZ^s$, we define 
$$
S_{\lambda}(\mcE):=S_{(\lambda_1-\lambda_s,\cdots,\lambda_{s-1}-\lambda_s,0)}(\mcE)\otimes \det(\mcE)^{\otimes \lambda_s}.
$$
These functors are well-defined and functorial, see \cite{ABW-schur}.

We recall some properties of Schur functors.
For any vector bundle $\mcV$, we write 
$$
\mcV\approx \bigoplus \mcV_i
$$
if $\mcV$ admits a filtration with associated graded part isomorphic to the right hand side of the formula.

\begin{theorem}\label{thm:schur-decomp}
\begin{enumerate}
\item For any dominant weights $\lambda,\mu\in \mbZ^s$, there is a filtration such that
$$
S_{\lambda}(\mcE)\otimes S_\mu(\mcE)\approx  
\bigoplus_{\substack{|\nu|=|\lambda|+|\mu|\\ \nu_s\geq \lambda_s+\mu_s}} S_\nu(\mcE)^{\oplus c(\lambda,\mu;\nu)}
$$
where $\nu$ runs over the dominant weights. See \cite[Theorem 2.3.4]{Weyman_03} and \cite{Boffi-LR} for a formula for $c(\lambda,\mu;\nu)$. The filtration is universal in the sense that it depends only on $\lambda,\mu$.
\item For any $m\geq 0$, there is a filtration
$$
\Sym^m(\Sym^2\mcE)\approx \bigoplus_{\substack{|\lambda|=2m,\\
\lambda_i\text{ even for all }i}}S_{\lambda}(\mcE).
$$
The filtration is universal in the sense that it depends only on $m$.
\end{enumerate}
\end{theorem}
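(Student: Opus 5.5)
Since both statements are characteristic-free and "universal", I would prove them first for the tautological representation of $\GL_s$ over $\mbZ$ and then globalize. Concretely, let $V=\mbZ^s$ with its standard $\GL_{s,\mbZ}$-action. If I produce a $\GL_{s,\mbZ}$-stable filtration of the relevant module, with graded pieces isomorphic to $S_\nu(V)$, each graded piece free over $\mbZ$ and the whole construction functorial in $V$, then I can transport it to $\mcE$. The transport goes through the frame bundle $P=\mathrm{Isom}(\mcO_X^s,\mcE)$, which is a $\GL_s$-torsor: form the associated bundle $P\times^{\GL_s}(-)$. This is exact on $\GL_s$-stable filtrations and sends $S_\nu(V)$ to $S_\nu(\mcE)$, because Schur functors commute with base change and with the associated-bundle construction. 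This is exactly the meaning of "the filtration depends only on $\lambda,\mu$" (resp. $m$). Using $S_\lambda(\mcE)=S_{\lambda-\lambda_s(1,\dots,1)}(\mcE)\otimes\det(\mcE)^{\lambda_s}$, part (1) for arbitrary dominant weights reduces to the case of honest partitions, i.e.\ $\lambda_s=\mu_s=0$. Twisting back then accounts for the condition $\nu_s\ge \lambda_s+\mu_s$.

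For part (1) with partitions, I would use the identification of $S_\lambda(V\otimes k)$ with the induced (dual Weyl) module $H^0(\lambda)$ for $\GL_s$ over any field $k$, together with two facts.
\begin{itemize}
\item[(a)] Donkin--Mathieu: a tensor product of modules with good filtrations has a good filtration.
\item[(b)] In a good filtration, the multiplicity of $H^0(\nu)$ is determined by the formal character.
\end{itemize}
The character of $S_\lambda\otimes S_\mu$ is $s_\lambda s_\mu=\sum c(\lambda,\mu;\nu)s_\nu$, so the multiplicities are the Littlewood--Richardson numbers. To get a filtration over $\mbZ$ that is functorial, rather than one depending on the field, I would instead invoke Boffi's explicit construction \cite{Boffi-LR}. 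That construction gives a natural filtration of $S_\lambda\otimes S_\mu$ indexed by LR-tableaux, whose graded pieces are $S_\nu$, and it is defined over $\mbZ$. Facts (a) and (b) then serve only to check the multiplicities fibrewise. The constraint $|\nu|=|\lambda|+|\mu|$ is just the degree, and $\nu_s\ge 0$ holds because all weights are polynomial.

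For part (2), the character identity is Littlewood's formula
$$\sum_m\ch\Sym^m(\Sym^2V)=\prod_{i\le j}(1-x_ix_j)^{-1}=\sum_{\lambda\text{ even}}s_\lambda ,$$
so the multiplicities are forced once a good filtration exists. The existence of a natural good filtration over $\mbZ$ is the main obstacle, since symmetric powers of a module with good filtration need not have one in general. Here I would use that $\Sym(\Sym^2V)$ is the coordinate ring of symmetric $s\times s$ matrices, on which $\GL_s$ acts by $g\cdot X=gXg^t$. I would filter it by the ideals generated by products of symmetric minors, ordered by shape; this is the standard monomial theory for symmetric matrices of De Concini--Procesi and Abeasis. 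The standard monomials in symmetric minors, whose shapes have columns of even length in the appropriate convention, form a $\mbZ$-basis compatible with this filtration. Each subquotient is then identified with $S_\lambda(V)$ for $\lambda$ even by matching straightening relations, or equivalently by the Akin--Buchsbaum--Weyman characteristic-free Cauchy-type argument. Freeness of the subquotients over $\mbZ$ follows from the standard monomial basis, and equivariance and naturality in $V$ are clear because the filtration is defined by ideals of minors. Globalizing via the torsor as above finishes the proof.
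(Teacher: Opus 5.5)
Your part (1) is the paper's argument. The paper's proof just cites Boffi's universal Littlewood--Richardson filtration \cite{Boffi-LR} (see also \cite[\S 2.3]{Weyman_03}). Your reduction to partitions by twisting with $\det$, and your globalization through the frame bundle, only make explicit what ``universal'' means there.

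For part (2) you take a genuinely different route. The paper quotes Boffi's plethysm theorem \cite[\S 3]{Boffi-Plethysms}, valid over any commutative ring, as a black box. You instead rebuild it from standard monomial theory for the generic symmetric matrix, filtering $\Sym(\Sym^2V)$ by the spans of products of symmetric minors of shape $\geq\sigma$. This has the merit of making the filtration explicit, by $\GL$-stable ideals of the coordinate ring of symmetric matrices that are compatible with the determinantal ideals.

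However, it puts all the weight on the step you only gesture at. You must show that the subquotient $Q_\sigma$ of shape $\sigma$ is the Schur module $S_{2\sigma'}(V)$, and not some other $\mbZ$-form (for instance the Weyl module) of the same rational irreducible. Abeasis works in characteristic $0$, and the Akin--Buchsbaum--Weyman Cauchy argument concerns $\Sym(F\otimes G)$, so neither settles this. The characteristic-free input has to be the De Concini--Procesi straightening law for symmetric minors \cite{dCP76}, which gives that $Q_\sigma$ is $\mbZ$-free.

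One clean way to close the step is the following. Set $T_\sigma:=\bigotimes_i(\bigwedge^{\sigma_i}V)^{\otimes 2}$. Both $Q_\sigma$ and $S_{2\sigma'}(V)$ are torsion-free equivariant quotients of $T_\sigma$: the first via products of minors of shape $\sigma$, the second by the definition of $S_\lambda$, since $(2\sigma')'=(\sigma_1,\sigma_1,\sigma_2,\sigma_2,\dots)$. After $\otimes\mbQ$, both are the irreducible of highest weight $2\sigma'$ (for $Q_\sigma$ by the characteristic-zero theory). This irreducible occurs in $T_\sigma\otimes\mbQ$ with multiplicity one, the Kostka number $K_{a,a}=1$ for $a=(\sigma_1,\sigma_1,\sigma_2,\sigma_2,\dots)$. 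Hence the two kernels agree rationally. Both kernels are saturated sublattices of $T_\sigma$, since the quotients are torsion-free, so they agree over $\mbZ$ and $Q_\sigma\cong S_{2\sigma'}(V)$ equivariantly.

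With this added, your proof of part (2) is complete and independent of \cite{Boffi-Plethysms}.
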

\begin{proof}
Part (1) is proved in \cite{Boffi-LR}, see also \cite[\S 2.3]{Weyman_03}. Part (2) is \cite[\S 3]{Boffi-Plethysms}.
\end{proof}

We remark that $\bigwedge^m(\Sym^2(\mcE))$ does not admit a universal filtration, see \cite[\S 4]{Boffi-Plethysms}.

Schur functors also extend to complexes and to derived categories. We conclude with the following example.
\begin{theorem}\label{thm:Schur-complex}
Let 
$$
0\lr \mcV_1\lr \mcV_2\lr \mcV_3\lr 0
$$ 
be an exact sequence of vector bundles. We have a long exact sequence
$$
0\lr \bigwedge^2 \mcV_1\lr \mcV_1\otimes \mcV_2\lr \Sym^2 \mcV_2\lr \Sym^2 \mcV_3\lr 0.
$$

\end{theorem}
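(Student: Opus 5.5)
The plan is to obtain this sequence as the degree-two piece of the Koszul-type complex attached to the surjection $\mcV_2\to\mcV_3$ with kernel $\mcV_1$; more precisely, as the second graded piece of the resolution of $\Sym^\bullet\mcV_3$ over $\Sym^\bullet\mcV_2$. The question is local, since all maps below are defined globally and functorially. So I first write down the maps globally and then check exactness Zariski locally, where the sequence splits.

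\textbf{Step 1: the maps.} The map $\Sym^2\mcV_2\to\Sym^2\mcV_3$ is $\Sym^2$ of the quotient map, hence surjective. The map $\mcV_1\otimes\mcV_2\to\Sym^2\mcV_2$ is $a\otimes b\mapsto a\cdot b$, using $\mcV_1\subset\mcV_2$. The map $\bigwedge^2\mcV_1\to\mcV_1\otimes\mcV_2$ is $a\wedge a'\mapsto a\otimes a'-a'\otimes a$. This is well defined in every characteristic, because $a\wedge a\mapsto 0$. The composites vanish:
\begin{itemize}
\item $a\otimes a'-a'\otimes a\mapsto aa'-a'a=0$ in $\Sym^2\mcV_2$;
\item the image of $a\cdot b$ in $\Sym^2\mcV_3$ is $\bar a\bar b=0$, since $\bar a=0$.
\end{itemize}

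\textbf{Step 2: reduction to the split case.} Locally $\mcV_2\cong\mcV_1\oplus\mcV_3$ compatibly with the given maps. Then
$$
\Sym^2\mcV_2=\Sym^2\mcV_1\oplus(\mcV_1\otimes\mcV_3)\oplus\Sym^2\mcV_3
\quad\text{and}\quad
\mcV_1\otimes\mcV_2=(\mcV_1\otimes\mcV_1)\oplus(\mcV_1\otimes\mcV_3).
$$
Under these decompositions the complex is the direct sum of three complexes:
\begin{enumerate}
\item the identity $\mcV_1\otimes\mcV_3\to\mcV_1\otimes\mcV_3$;
\item the identity $\Sym^2\mcV_3\to\Sym^2\mcV_3$;
\item the sequence $0\to\bigwedge^2\mcV_1\to\mcV_1^{\otimes 2}\to\Sym^2\mcV_1\to 0$.
\end{enumerate}
So everything reduces to the exactness of (3).

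\textbf{Step 3: exactness of (3).} The right map is surjective and its kernel is, by definition, the submodule generated by the antisymmetric tensors $a\otimes a'-a'\otimes a$. This submodule is precisely the image of the left map, so the sequence is exact in the middle.

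Injectivity of the left map is the only point that needs care, since we cannot use characteristic-zero splittings. Take a local basis $e_i$ of $\mcV_1$. The images of $e_i\wedge e_j$ for $i<j$ are $e_i\otimes e_j-e_j\otimes e_i$, and these are linearly independent, because they involve disjoint pairs of basis tensors. This holds over any base ring, so no assumption on the characteristic enters.

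The main obstacle, though a mild one, is making sure the local identification in Step 2 really decomposes the complex as described. This amounts to checking that the left map lands in the $\mcV_1\otimes\mcV_1$ summand, which it does, since $\bigwedge^2\mcV_1$ is built from $\mcV_1$ alone. As a final sanity check, the ranks match: with $r_i=\operatorname{rk}\mcV_i$ and $r_2=r_1+r_3$, one has
$$
\binom{r_1}{2}-r_1r_2+\binom{r_2+1}{2}-\binom{r_3+1}{2}=0.
$$
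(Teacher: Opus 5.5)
Your argument is correct, and it differs from the paper's only in how exactness is established.

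\textbf{What the paper does.} The paper identifies the complex as the Schur complex $\Sym^2(\mcV_1\to\mcV_2)$, which is the same degree-two strand of the Koszul complex that you write down. It then cites Weyman's acyclicity theorem for Schur complexes of a two-term resolution, which gives that the complex is locally quasi-isomorphic to $\Sym^2\mcV_3$ placed in degree $0$.

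\textbf{What you do instead.} You prove exactness by hand. After a local splitting, the complex decomposes into two identity pieces and the sequence $0\to\bigwedge^2\mcV_1\to\mcV_1^{\otimes 2}\to\Sym^2\mcV_1\to 0$. You then check exactness of that sequence directly over an arbitrary base.

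\textbf{Comparison.} Your route is self-contained and gives explicit maps. It also makes it transparent that nothing depends on the characteristic, which matters here because the paper uses this sequence in characteristic $p$. The citation buys generality, since it covers all Schur functors of a complex at once. Even so, your splitting argument extends to every $\Sym^d$: in the split case the degree-$d$ strand is the Koszul complex of $\mcV_1$ tensored with $\Sym^\bullet\mcV_3$, which is exact over any base.
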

\begin{proof}
This is a special case of the construction of Schur functors for complexes \cite[\S 2.4]{Weyman_03}.
We describe the construction explicitly in this case. Let $\Phi:\Phi_1\to \Phi_0$ be a two-term complex. By 
the discussion before \cite[Proposition 2.4.1]{Weyman_03}, we see that
$$
\Sym^d\Phi:\bigwedge^d\Phi_1\lr \bigwedge^{d-1}\Phi_1\otimes\Phi_0\lr \cdots\lr \Phi_1\otimes\Sym^{d-1}\Phi_0\lr \Sym^d\Phi_0\lr 0.
$$
Now consider the complex $\mcV_1\overset{\rho}{\lr} \mcV_2$, where $\mcV_1$ is in degree $1$ and $\mcV_2$ is in degree $0$, then
$$
\Sym^2(\mcV_1\lr \mcV_2)=\Bigl[
0\lr \bigwedge^2 \mcV_1\lr \mcV_1\otimes \mcV_2\lr \Sym^2 \mcV_2
\Bigr]
$$
and by \cite[Theorem 2.4.10]{Weyman_03}, it is (locally) quasi-isomorphic to $\Sym^2 \mcV_3$, concentrated in degree $0$.
\end{proof}

\subsection{Modular representations}
In this subsection, we review some basic facts about modular representations. We refer the reader to \cite[\S 4]{AJ84} and \cite[\S 2.4]{Lorincz23} for a good summary.

Let $k$ be an algebraically closed field of characteristic $p$. Let $G$ be a (connected) reductive group over $k$, with $T\subset B\subset G$ a torus and Borel for $G$. For any character $\lambda$ of $T$, we have the induced representation
$$
\nabla_\lambda=H^0(G/B,\mcO(\lambda)).
$$
When $G=\GL(\bfV)$, by \cite[Theorem 4.1.10]{Weyman_03}, we have $\nabla_\lambda=S_\lambda(\bfV)$. In the remaining part of the paper, we will focus on this situation.

Let $\bfV$ be a $G$-module.
Following \cite{Donkin85}, an ascending exhaustive filtration
$$
0=\bfV_0\subset \bfV_1\subset \bfV_2\subset \cdots
$$
of $G$-submodules of $\bfV$ is called a \emph{good filtration} of $\bfV$ if each $\bfV_i/\bfV_{i-1}$ is isomorphic to $\nabla_\lambda$ for some $\lambda$. 

If one picks a total ordering $0=\lambda_0,\lambda_1,\cdots$ of the dominant cocharacters, compatible with their dominance order, one can choose $\bfV_i\subset \bfV$ to be the maximal subrepresentation admitting only weights $\lambda_j$ with $j\leq i$. In that case, $(\bfV_i/\bfV_{i-1})^\vee$ is generated by its highest weight space $\bfW_i^*$, and by adjunction there is a map
$$
\bfV_i/\bfV_{i-1}\lr \bfW_i\otimes \nabla_{\lambda_i};
$$
then $\bfV$ admits a good filtration if and only if all of these maps are isomorphisms.

We summarize the properties we will use.
\begin{lemma}\label{lem:goodfil}
Let $\bfV$ and $\bfW$ be finite dimensional $G$-modules.
\begin{enumerate}
\item Consider the long exact sequence
$$
0\lr \bfV_n\lr \bfV_{n-1}\lr \cdots\lr \bfV_1\lr \bfV\lr 0
$$
of finite dimensional $G$-modules. If all $\bfV_i$ have good filtrations, then so does $\bfV$.
\item If $\bfV$ and $\bfW$ admit good filtrations, then $\bfV\otimes \bfW$ has a good filtration.
\item If $\langle \chi+\rho,\alpha^\vee\rangle\leq p$ for all weights $\chi$ of $\bfV$ and all positive roots $\alpha$, then $\bfV$ has a good filtration.
\end{enumerate}
\end{lemma}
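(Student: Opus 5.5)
The plan is to derive all three statements from the cohomological characterization of good filtrations due to Donkin and Friedlander. A finite-dimensional $G$-module $\bfV$ has a good filtration if and only if $\operatorname{Ext}^1_G(\Delta_\mu,\bfV)=0$ for all dominant $\mu$, where $\Delta_\mu=\nabla_{-w_0\mu}^\vee$ is the Weyl module. Moreover, in that case $\operatorname{Ext}^i_G(\Delta_\mu,\bfV)=0$ for all $i\geq 1$. This rests on $\operatorname{Ext}^i_G(\Delta_\mu,\nabla_\lambda)=0$ for $i>0$, which follows from Kempf vanishing. I would quote this criterion from \cite[\S 4]{AJ84} or \cite{Donkin85} and not reprove it.

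For (1), split the resolution into short exact sequences $0\to \bfK_{i+1}\to \bfV_i\to \bfK_i\to 0$, with $\bfK_1=\bfV$ and $\bfK_n=\bfV_n$. We show by descending induction on $i$ that $\operatorname{Ext}^j_G(\Delta_\mu,\bfK_i)=0$ for all $j\geq 1$. The base case $i=n$ holds because $\bfV_n$ has a good filtration. For the inductive step, the long exact Ext sequence gives
$$\operatorname{Ext}^j(\Delta_\mu,\bfV_i)\to\operatorname{Ext}^j(\Delta_\mu,\bfK_i)\to\operatorname{Ext}^{j+1}(\Delta_\mu,\bfK_{i+1}).$$
Both outer terms vanish for $j\geq 1$, the left one since $\bfV_i$ has a good filtration and the right one by induction. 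Hence $\operatorname{Ext}^j(\Delta_\mu,\bfK_i)=0$ for $j\geq1$. At $i=1$ the criterion shows that $\bfV$ has a good filtration. The only care needed is to use the vanishing of all higher Ext, not just $\operatorname{Ext}^1$.

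For (2), by the filtration argument and (1) (or simply because extensions of modules with good filtrations again have good filtrations, by the Ext criterion) it suffices to treat $\nabla_\lambda\otimes\nabla_\mu$. This is the Wang–Donkin–Mathieu theorem. For $G=\GL(\bfV)$, which is the case the paper needs, it also follows from Theorem \ref{thm:schur-decomp}(1), since $\nabla_\lambda=S_\lambda(\bfV)$ and the universal filtration there has subquotients $S_\nu(\bfV)=\nabla_\nu$. I would cite Mathieu or Donkin for general $G$. This is the deepest input, but it is cited rather than proved.

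For (3), the hypothesis places every weight $\chi$ in the closure of the bottom $p$-alcove, because $\langle\chi+\rho,\alpha^\vee\rangle\leq p$ for all positive $\alpha$. By the linkage principle, $\bfV$ is the direct sum of its blocks, and the composition factors $L_\lambda$ have dominant $\lambda$ in this closed alcove. For such $\lambda$ the strong linkage principle gives $L_\lambda=\nabla_\lambda=\Delta_\lambda$. Also, $\operatorname{Ext}^1_G(L_\lambda,L_{\lambda'})=0$ for two such weights, since distinct weights in the closed bottom alcove are not linked, or lie on walls where the Weyl module is simple. So $\bfV$ is semisimple, a direct sum of modules $\nabla_\lambda$, and in particular it has a good filtration. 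The step I expect to be the main obstacle is the boundary case of equality $=p$, where weights lie on alcove walls. There I would use that a wall weight $\lambda$ is linked only to weights of the form $w\cdot\lambda$ lying outside the set of dominant weights satisfying the bound, and check this via the Jantzen sum formula, whose sum is empty in this case. Alternatively I would cite \cite[\S 2.4]{Lorincz23} and \cite{AJ84} directly.
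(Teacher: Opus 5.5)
Your proposal is correct. The paper gives no argument of its own here, only the citation \cite[\S 4.1]{AJ84}, and your sketch is a faithful reconstruction of the standard proofs behind it. Donkin's $\operatorname{Ext}$-criterion, together with $\operatorname{Ext}^i_G(\Delta_\mu,\nabla_\lambda)=0$ for $i>0$, gives (1). The Wang--Donkin--Mathieu theorem gives (2), and the linkage principle gives (3).

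Your observation that for $G=\GL(\bfV)$, the only case the paper uses, (2) already follows from the natural (hence $\GL(\bfV)$-stable) filtration of Theorem \ref{thm:schur-decomp}(1) is worth keeping. It removes the deepest input.

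A few small corrections:
\begin{enumerate}
\item In (2), closure under extensions comes from the $\operatorname{Ext}$-criterion, not from (1) as stated.
\item In (3), only the dominant weights of $\bfV$ lie in the closed bottom alcove $\overline{C}$. Those are the only ones you use, so this is just a matter of phrasing.
\item In (3), to conclude semisimplicity you also need $\operatorname{Ext}^1_G(L_\lambda,L_\lambda)=0$, which holds for every reductive $G$ but should be stated.
\item The wall case is not an obstacle. $\overline{C}$, walls included, is a fundamental domain for the dot action of the affine Weyl group $W_p$. So two distinct dominant weights satisfying the bound are never linked, and the linkage principle alone kills $\operatorname{Ext}^1$ between them. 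The Jantzen sum formula would only reprove $\Delta_\lambda=L_\lambda=\nabla_\lambda$, which strong linkage already gives.
\end{enumerate}
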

\begin{proof}
See \cite[\S 4.1]{AJ84}.
\end{proof}

The result we need is the following:
\begin{proposition}\label{prop:wedge-good}
Suppose $p=0$, or $p\geq 2s-1$ ($p\neq 2$). Let $\bfR$ be a vector space of dimension $s$, and let $m:=\dim \Sym^2 (\bfR)=s(s+1)/2$. Then:
\begin{enumerate}
\item For any $0\leq j\leq m$, the module $\bigwedge^j(\Sym^2 (\bfR))$ has a good filtration, with associated graded part 
$$
\bigwedge^{j}(\Sym^2(\bfR))\approx \bigoplus_{\lambda\in Q_{1}(2j)}S_{\lambda}(\bfR)
$$
where $Q_{1}(2j)$ is the set of partitions (in hook notation) $\lambda=(a_1+1,\cdots,a_k+1|a_1,\cdots,a_k)$ is a partition of $2j$, such that $a_1>\cdots>a_k>0$, see \cite[\S 1.1.2]{Weyman_03}.

\item For any $0\leq j\leq m$ and $d\geq 0$, the module 
$$
\bigwedge^j(\Sym^2 (\bfR))\otimes \Sym^d(\Sym^2 (\bfR^\vee))
$$
has a good filtration, whose factors $S_{\gamma}(\bfR^\vee)$ satisfy $\gamma_s\geq -(s+1)$.
\end{enumerate}
\end{proposition}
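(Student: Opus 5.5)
We prove part (1) by Lemma \ref{lem:goodfil}(3), and then deduce part (2) from part (1), Theorem \ref{thm:schur-decomp} and Lemma \ref{lem:goodfil}(2). In characteristic $0$ every module is semisimple, so only the decompositions need to be checked. These are the classical plethysm for $\bigwedge^j\Sym^2$ (\cite[\S 1.1.2]{Weyman_03}) and the Littlewood--Richardson rule. We therefore concentrate on $p\geq 2s-1$.

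\textbf{Part (1).} Every weight $\chi$ of $\bigwedge^j(\Sym^2\bfR)$ is a sum $\chi=\sum_{(a,b)\in S}(\epsilon_a+\epsilon_b)$ over a $j$-element set $S$ of unordered pairs $a\leq b$ in $\{1,\dots,s\}$. For a positive root $\alpha=\epsilon_i-\epsilon_l$ with $i<l$ we have $\langle\rho,\alpha^\vee\rangle=l-i$. The pairs containing $i$ are $(i,b)$ for $b=1,\dots,s$, counted once each, and the pair $(i,i)$ contributes $2$ to $\chi_i$. Hence $\chi_i\leq s+1$. Moreover $\chi_i-\chi_l$ is at most the contribution of pairs containing $i$ but not $l$, which are $s-1$ pairs including $(i,i)$, so $\chi_i-\chi_l\leq s$. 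This crude bound gives $\langle\chi+\rho,\alpha^\vee\rangle\leq s+(l-i)\leq 2s-1$. The worst case $l-i=s-1$ forces $i=1$, $l=s$, and the bound is then $\leq 2s-1\leq p$. So Lemma \ref{lem:goodfil}(3) applies. It remains to identify the factors. The characters of the factors $\nabla_\lambda$ of a good filtration are determined by the character of the module, and the formal character of $\bigwedge^j\Sym^2\bfR$ is independent of characteristic. The multiplicities are therefore those of characteristic zero, which gives $Q_1(2j)$.

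\textbf{Part (2).} By Theorem \ref{thm:schur-decomp}(2), $\Sym^d(\Sym^2\bfR^\vee)$ has a filtration with factors $S_\mu(\bfR^\vee)=\nabla$-modules, so it has a good filtration. Tensoring with part (1) and applying Lemma \ref{lem:goodfil}(2), the product has a good filtration. Again the factors are read off from the character, so they are the $S_\gamma(\bfR^\vee)$ appearing in $S_\mu(\bfR^\vee)\otimes S_\lambda(\bfR)$. We rewrite $S_\lambda(\bfR)=S_{-w_0\lambda}(\bfR^\vee)$, whose last entry is $-\lambda_1$. For $\lambda\in Q_1(2j)$ we have $\lambda_1=a_1+1\leq s+1$, because a partition fitting in $s$ rows forces $a_1\leq s$. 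By Theorem \ref{thm:schur-decomp}(1), every factor satisfies $\gamma_s\geq\mu_s+(-\lambda_1)\geq 0-(s+1)$.

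The main obstacle is the sharp weight estimate in Part (1). The crude bound $\chi_i-\chi_l\leq s$ combined with $l-i\leq s-1$ gives exactly $2s-1$, so there is no slack. If an edge case made the bound fail, for instance a weight achieving $\chi_1-\chi_s=s$ with $l-i=s-1$ giving $2s-1$, that case is still fine. I would double-check the counting: $(i,i)$ contributes $2$ and the pairs $(i,b)$ with $b\neq i,l$ contribute $1$ each, so the contribution is $2+(s-2)=s$. The bound therefore holds.
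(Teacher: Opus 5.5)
Your proof is correct and follows the paper's argument. You verify the hypothesis of Lemma \ref{lem:goodfil}(3) through $\chi_i-\chi_l\leq s$ and $l-i\leq s-1$, identify the factors from the characteristic-independent formal character, and for part (2) combine Lemma \ref{lem:goodfil}(2) with $\lambda_1\leq s+1$ and Theorem \ref{thm:schur-decomp}(1). Your count of pairs containing $i$ but not $l$ is only a slightly cleaner version of the paper's case split on whether $\chi_a=s+1$.
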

\begin{proof}
When $p=0$, this is standard, in the remaining proof, we assume $p\geq 2s-1$. We use Lemma \ref{lem:goodfil}(3). Let $\alpha^\vee=\epsilon_a-\epsilon_b$ be a positive coroot with $a<b$. For any weight $\chi$ of $\bigwedge^j(\Sym^2(\bfR))$ and any $1\leq i\leq s$, we have $0\leq \chi_i\leq s+1$, with the maximal one given by
$$
e_ie_1\wedge e_ie_2\wedge\cdots\wedge e_ie_s.
$$
We claim that $\chi_a-\chi_b\leq s$: if $\chi_a=s+1$, then $\chi_b\geq 1$ for all $b\neq a$, hence $\chi_a-\chi_b\leq  s$, the other cases follow trivially.

Hence 
$$
\langle \chi+\rho,\alpha^\vee\rangle=\chi_a-\chi_b+(b-a)\leq  s+(s-1)=2s-1\leq  p.
$$
Hence $\bigwedge^j(\Sym^2 (\bfR))$ has a good filtration. Once we have a good filtration, the associated graded part of the filtration is uniquely determined by the formal character (which is defined for any finite dimensional $G$-module). The assertion then follows from the corresponding Plethysms identity.

For part (2), we have a good filtration by Lemma \ref{lem:goodfil}(2), it remains to bound the weights.

Let $S_{\lambda}(\bfR)$ be a factor of $\bigwedge^j(\Sym^2(\bfR))$, then $\lambda=(a_1+1,\cdots,a_k+1|a_1,\cdots,a_k)$. In particular, we have $\lambda_1=a_1+1$ and $\lambda$ has $a_1=\lambda_1-1$ rows. Since $\bfR$ has rank $s$, we have $\lambda_1-1\leq s$, hence $\lambda_1\leq s+1$.
On the other hand, all $\mu\in\mbZ^s$ such that $S_\mu (\bfR^\vee)$ contributes to $\Sym^d(\Sym^2 (\bfR^\vee))$ have $\mu_i\geq 0$. By Theorem \ref{thm:schur-decomp}(1), all $\nu=(\nu_1,\cdots,\nu_s)$ such that $S_{\nu}(\bfR^\vee)$ appears in the tensor product
$$
S_{(-\lambda_s,\cdots,-\lambda_1)}(\bfR^\vee)\otimes S_{\mu}(\bfR^\vee)
$$
must have $\nu_s\geq -\lambda_1+\mu_s\geq -(s+1)$.
\end{proof}

\subsection{Vector bundles on Grassmannians}
In this subsection, we discuss vector bundles on Grassmannians. Let $\bfF$ be a vector space of dimension $n$ over $k$. Consider the Grassmannian $\Gr=\Gr(s,\bfF)$ with tautological bundle $\mcR\subseteq\mcF:=\bfF\otimes_{k}\mcO_{\Gr}$. We want to consider Schur functors and their filtrations on $\Gr$. Note that there is no notion of good filtration for families.

\begin{proposition}\label{prop:exact-fun}
Let $G$ be an affine algebraic group and $H\subset G$ a closed subgroup. For each finite dimensional rational $H$-module $\bfW$, consider the sheaf of $\mcO_{G/H}$-modules $\mcL_{G/H}(\bfW)$ which sends an open subset $U\subset G/H$ to
$$
\mcL_{G/H}(\bfW)(U)=\{\text{sections of }G\times^{H}\bfW\lr G/H\text{ above }U\}.
$$
The functor $\mcL_{G/H}(-)$ induces an equivalence of categories between finite dimensional rational $H$-modules and $G$-equivariant algebraic vector bundles on $G/H$. This functor is exact and commutes with tensor powers, duals, symmetric powers, divided powers, exterior powers, and Frobenius twists. In particular, it commutes with Schur functors.
\end{proposition}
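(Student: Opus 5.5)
This is the classical correspondence between homogeneous vector bundles and representations of the stabilizer, so I would follow the standard argument (as in Jantzen, Part I, \S 5). First I construct $G\times^H\bfW$ as the quotient of $G\times\bfW$ by the free right $H$-action $(g,w)h=(gh,h^{-1}w)$. This quotient exists as a scheme and is locally trivial over $G/H$ in the fppf topology. The reason is that $G\to G/H$ is an $H$-torsor, and the associated bundle descends because $\bfW$ is affine with a linear action. Sections of $\mcL_{G/H}(\bfW)$ over $U$ are then identified with $H$-equivariant morphisms $f\colon\pi^{-1}(U)\to\bfW$, i.e.\ those with $f(gh)=h^{-1}f(g)$. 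In other words,
$$
\mcL_{G/H}(\bfW)(U)=\bigl(\mcO(\pi^{-1}(U))\otimes\bfW\bigr)^H .
$$
This description makes $G$-equivariance and functoriality in $\bfW$ evident.

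For the equivalence, the inverse functor takes a $G$-equivariant vector bundle $\mcE$ to its fiber $\mcE_{x_0}$ at the base point $x_0=eH$. This fiber carries an action of $\Stab(x_0)=H$. The composite $\bfW\mapsto\mcL(\bfW)_{x_0}$ is visibly $\cong\bfW$. Conversely, the map $G\times\mcE_{x_0}\to\mcE$, $(g,v)\mapsto gv$, is $H$-invariant and descends to a morphism $G\times^H\mcE_{x_0}\to\mcE$ of vector bundles. This morphism is an isomorphism on every fiber by transitivity of the $G$-action, hence an isomorphism. Fully faithfulness follows the same way: a $G$-equivariant bundle map is determined by its value on the fiber at $x_0$, and that value must be $H$-equivariant.

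Exactness can be checked after the faithfully flat base change $\pi\colon G\to G/H$. Since $\pi^*\mcL(\bfW)\cong\mcO_G\otimes\bfW$ functorially in $\bfW$, exactness of sequences of $H$-modules gives exactness of the pulled-back sequences, and faithful flatness descends it. The same trivialization also handles the tensor operations. Tensor products, duals, symmetric, divided and exterior powers, and Frobenius twists of $\mcO_G\otimes\bfW$ are computed fiberwise by the corresponding constructions on $\bfW$. The natural comparison maps (for instance $\mcL(\bfW_1)\otimes\mcL(\bfW_2)\to\mcL(\bfW_1\otimes\bfW_2)$) are defined globally and $G$-equivariantly, so by descent they are isomorphisms. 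Schur functors are defined as images of natural maps between tensor products of exterior and symmetric powers. An exact functor that commutes with those powers and with the natural maps between them preserves such images, so it commutes with Schur functors.

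The main technical point is the existence and local triviality of $G\times^H\bfW$ in general, for a non-reduced or non-smooth stabilizer in characteristic $p$. If I cannot cite this, I would restrict to the case needed in the paper, $G=\GL(\bfF)$ and $H$ a parabolic subgroup, where $G\to G/H$ is Zariski-locally trivial. There the big cell gives explicit local sections, and $G\times^H\bfW$ is glued from trivial pieces.
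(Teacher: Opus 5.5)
Your argument is correct. It is the standard construction (Jantzen, Part I, \S 5) behind the paper's proof, which simply cites \cite[Proposition 3.2]{CFP16} for the equivalence, exactness, and compatibility with tensor operations, while your deduction for Schur functors (an exact functor commuting with symmetric and exterior powers and the natural maps between them preserves their images) is exactly the paper's remark that this part follows from the construction of Schur functors.
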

\begin{proof}
This is \cite[Proposition 3.2]{CFP16}, where the last part follows from the construction of the Schur functor.
\end{proof}

\begin{example}\label{exmp:grassmannian}
Let $\bfF$ be a vector space of dimension $n$ and let $\bfR\subset \bfF$ be a subspace of dimension $s$. 
Let $P_s\subset \GL(\bfF)$ be the parabolic stabilizing $\bfR$, with Levi factor $\GL(\bfR)\times \GL(\bfF/\bfR)$. 
Via the projection $P_s\to \GL(\bfR)$, the standard representation of $\GL(\bfR)$ becomes a $P_s$-module, and the tautological bundle is $\mcR=\mcL_{G/P_s}(\bfR)$, see \cite[Example 3.5]{CFP16}.
\end{example}

Next, we discuss some Borel--Weil--Bott theorems that will be used later.

\begin{theorem}\label{thm:GLn-BWB}
Let $\mcR\subset \mcF$ be the tautological subbundle over $\Gr(s,\bfF)$. Consider the short exact sequence:
$$
0\lr \mcR\lr \mcF\lr \mcQ\lr 0.
$$
Let $\lambda$ and $\mu$ be weakly decreasing sequences of integers of lengths $s$ and $n-s$. Let $\mcV$ be the vector bundle $S_\lambda(\mcR^\vee)\otimes S_\mu(\mcQ^\vee)$ on $\Gr$.
Set $\beta=(\lambda_1,\cdots,\lambda_s,\mu_1,\cdots,\mu_{n-s})$.
Then
\begin{enumerate}
\item If $\beta$ is weakly decreasing, then 
$$
H^0(\Gr,\mcV)=S_\beta(\bfF^\vee)
$$
as $\GL(\bfF)$-representations, and all higher cohomology of $\mcV$ vanishes.
\item When $\mu=0$ and $-(n-s)\leq \lambda_s\leq -1$, then $H^i(\Gr,\mcV)=0$ for all $i$.

\item When $\mu=(1)$, or $(1,1)$, and $\lambda_s=0$, then $H^i(\Gr,\mcV)=0$ for all $i$.
\end{enumerate}
\end{theorem}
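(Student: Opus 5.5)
The plan is to realize $\Gr=\Gr(s,\bfF)$ as a partial flag variety $\GL(\bfF)/P$ and reduce all three parts to Borel--Weil--Bott for line bundles on the full flag variety $\GL(\bfF)/B$. Let $\pi:\GL(\bfF)/B\to\Gr$ be the projection, whose fibres are $\Fl(\bfR)\times\Fl(\bfF/\bfR)$. By Proposition \ref{prop:exact-fun} and Example \ref{exmp:grassmannian}, $\mcV=\mcL_{G/P}(S_\lambda(\bfR^\vee)\otimes S_\mu((\bfF/\bfR)^\vee))$. Relative Borel--Weil for the Levi $\GL(\bfR)\times\GL(\bfF/\bfR)$, using $\nabla_\lambda=S_\lambda$ from \cite[Theorem 4.1.10]{Weyman_03}, gives $\pi_*\mcO(\beta)=\mcV$ (with the sign conventions chosen so that $S_\lambda(\mcR^\vee)$ corresponds to $\lambda$ on the dual). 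Kempf vanishing on the fibres, valid in any characteristic, gives $R^i\pi_*\mcO(\beta)=0$ for $i>0$. Hence
$$
H^i(\Gr,\mcV)=H^i(\GL(\bfF)/B,\mcO(\beta))\quad\text{for all } i.
$$

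For part (1), if $\beta$ is dominant, Kempf vanishing gives $H^{>0}=0$, and $H^0=\nabla_\beta=S_\beta(\bfF^\vee)$, again by \cite[Theorem 4.1.10]{Weyman_03}.

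For parts (2) and (3) I would use the characteristic-free vanishing criterion: if $\langle\beta+\rho,\alpha^\vee\rangle=0$ for some simple root $\alpha$, then all cohomology of $\mcO(\beta)$ vanishes. This follows by pushing forward along the $\mbP^1$-fibration $G/B\to G/P_\alpha$, since $\mcO(-1)$ on $\mbP^1$ is acyclic (\cite[II.5.4]{Jantzen}). More generally, one can reduce first through the parabolic for a Levi $\GL_a$ and use vanishing for $\mcO(-1),\dots,\mcO(-a)$ on $\mbP^{a}$. This is the only subtle point in positive characteristic, because the full Bott theorem fails there.

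In part (2), $\beta=(\lambda_1,\dots,\lambda_s,0,\dots,0)$ with $-(n-s)\le\lambda_s\le-1$. I would not look at a single simple root. Instead, consider the $\mbP^{n-s}$-bundle structure relating positions $s,\dots,n$. Pushing forward along $G/B\to G/P'$, where $P'$ has Levi $\GL_1^{s-1}\times\GL_{n-s+1}$, the weight restricted to the last $n-s+1$ coordinates is $(\lambda_s,0,\dots,0)$. On $\mbP^{n-s}$ this is, up to a convention-dependent dualization, $\mcO(\lambda_s)$ with $-(n-s)\le\lambda_s\le-1$. That bundle is acyclic in every characteristic, because the cohomology of $\mcO(d)$ on projective space is characteristic-free. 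Hence all cohomology vanishes.

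Part (3) is handled the same way. For $\mu=(1)$ and $\lambda_s=0$, $\beta=(\dots,0,1,0,\dots)$ at positions $s,s+1$, so $\langle\beta+\rho,\alpha_s^\vee\rangle=0-1+1=0$ and the simple reflection argument applies. For $\mu=(1,1)$, use the Levi $\GL_3$ on positions $s,s+1,s+2$, where the weight is $(0,1,1)$. Equivalently, since $\langle\beta+\rho,\alpha_s^\vee\rangle=0$ again, the $\mbP^1$ argument suffices. One can also argue directly: $\mcQ^\vee\subset\mcF^\vee$ and the sequence $0\to\mcQ^\vee\to\mcF^\vee\to\mcR^\vee\to0$ tensored with $S_\lambda(\mcR^\vee)$ identifies cohomology via part (1) and a Pieri filtration (Theorem \ref{thm:schur-decomp}), but the root argument is cleaner.

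The main obstacle is bookkeeping sign and dualization conventions: matching $S_\lambda(\mcR^\vee)$ with the right $B$-character, and checking that the relevant inner product really vanishes rather than equals $\pm1$. I would fix the conventions by checking the case $\lambda=(1,0,\dots)$, $\mu=0$, where $H^0=\bfF^\vee$.
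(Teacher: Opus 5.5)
Your proposal is correct and is essentially the paper's argument: identify $H^i(\Gr,\mcV)$ with the cohomology of the line bundle $\mcL_\beta$ on the full flag variety (the paper cites Weyman's Corollary 4.1.12 where you invoke relative Kempf vanishing), and for (2) kill the cohomology by factoring through $\Fl(1,\dots,s)$ and pushing forward along the $\mbP^{n-s}$-fibration $\Fl(1,\dots,s)\to\Fl(1,\dots,s-1)$, on whose fibres the bundle is $\mcO(\lambda_s)$. The only variation is in (3): you use a single $\mbP^1$-fibration forgetting $\mcU_s$ (fibrewise $\mcO(\beta_s-\beta_{s+1})=\mcO(-1)$) for both $\mu=(1)$ and $\mu=(1,1)$, whereas the paper uses the $\mbP^a$-fibration $\Fl(1,\dots,s,s+a)\to\Fl(1,\dots,s-1,s+a)$ with $a=1,2$ and reaches the same $\mcO(-1)$ conclusion.
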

\begin{proof}
Part (1) is \cite[Theorem 2.5.1]{SSam16}. To proceed, we use \cite[Corollary 4.1.12]{Weyman_03}.
For any increasing chain $(i_1,\cdots,i_k)$, let
$
\Fl(i_1,\cdots,i_k)
$
be the partial flag variety parameterizing increasing filtrations
$$
\Fl(i_1,\cdots,i_k): \mcU_{i_1}\subset \cdots\subset \mcU_{i_k},\quad \operatorname{rank}(\mcU_{i})=i.
$$
Let $\Fl$ be the full flag variety with natural projection $\pi:\Fl\to \Gr(s,\bfF)$. For any $\beta$, consider
$$
\mcL_\beta=\bigotimes_{i=1}^s (\mcU_i/\mcU_{i-1})^{\otimes(-\lambda_i)}\otimes \bigotimes_{j=1}^{n-s}(\mcU_{s+j}/\mcU_{s+j-1})^{\otimes (-\mu_j)}.
$$
By \cite[Corollary 4.1.12]{Weyman_03}, we have
$$
R\pi_*\mcL_\beta=\mcV.
$$
Hence we are reduced to computing the cohomology of $\mcL_\beta$ over $\Fl$.

For part (2), consider 
$$
\Fl\overset{f}{\lr} \Fl(1,\cdots,s)\overset{g}{\lr} \Gr(s,\bfF).
$$
Let 
$$
\mcL_{\lambda}:=\bigotimes_{i=1}^s(\mcU_i/\mcU_{i-1})^{\otimes (-\lambda_i)}
$$
over $\Fl(1,\cdots,s)$. Since the fibers of $f$ are complete flag varieties, we have
$$
Rf_*\mcL_\beta=\mcL_{\lambda}\text{ and }Rg_*\mcL_{\lambda}=\mcV.
$$
Consider now the morphism 
$$
h:\Fl(1,\cdots,s)\lr \Fl(1,\cdots,s-1),
$$
whose fibers are isomorphic to $\mbP^{n-s}$, with the restriction
$$
(\mcU_s/\mcU_{s-1})|_{\mbP^{n-s}}=\mcO(-1).
$$
Hence we have fiberwise
$$
\mcL_\lambda|_{\mbP^{n-s}}\simeq \mcO(\lambda_s).
$$
Hence $Rh_*\mcL_{\lambda}=0$ since $-(n-s)\leq \lambda_s\leq -1$. Therefore,
$$
H^i(\Gr,\mcV)=H^i(\Fl(1,\cdots,s),\mcL_\lambda)=0\quad\text{for all }i.
$$

For part (3), let $a=1$ or $2$. Consider
$$
\Fl\overset{f}{\lr} \Fl(1,\cdots,s,s+a)\overset{g}{\lr} \Gr(s,\bfF).
$$
Let
$$
\mcL'=\bigotimes_{i=1}^s(\mcU_i/\mcU_{i-1})^{\otimes (-\lambda_i)}\otimes \det(\mcU_{s+a}/\mcU_s)^\vee
$$
on $\Fl(1,\cdots,s,s+a)$, then $Rg_*\mcL'=\mcV$. Consider
$$
h:\Fl(1,\cdots,s,s+a)\lr \Fl(1,\cdots,s-1,s+a)
$$
whose fibers are $\mbP^a$, and that
$$
\mcL'|_{\mbP^a}\simeq \mcO(\lambda_s)\otimes \mcO(-1)=\mcO(-1).
$$
Hence $Rh_*\mcL'=0$ and we conclude the assertion.
\end{proof}

\subsection{Symmetric determinantal varieties}
We recall some properties of symmetric determinantal varieties. 
We keep the notation of the previous subsection. Recall that $\Gr=\Gr(s,\bfF)$ is the Grassmannian, with universal tautological bundle $\mcR\subset \mcF$.

Let $\bfN=\Sym^2\bfF$ be the space of symmetric matrices, to distinguish it from the vector space, let $N$ be the corresponding algebraic variety over $k$.

Let $N_s\subset N$ be the subvariety consisting of symmetric matrices of rank $\leq s$. Let $\mcS:=\Sym^2\mcR$ with total space $Y_s\subset N\times \Gr\to \Gr$. By definition, it consists of symmetric matrices of rank $\leq s$ such that $X\bfF\subseteq \bfR$. Consider the following diagram:
\begin{equation*}
\begin{aligned}
\xymatrix{
Y_s\ar@{^(->}[r]\ar[d]^-{\pi_s}&N\times \Gr\ar[r]\ar[d]&\Gr\\
N_s\ar@{^(->}[r]&N
}
\end{aligned}
\end{equation*}
We recall the following result.
\begin{theorem}\label{thm:sym-determ}
Let $k[N]=\Sym(\Sym^2(\bfF^\vee))$ and let $I_{s+1}\subseteq k[N]$ be the ideal generated by the $(s+1)\times (s+1)$ minors. 
\begin{enumerate}
\item The subvariety $N_s$ is normal with defining ideal $I(N_s)=I_{s+1}\subset k[N]$.

\item The projection $\pi_s:Y_s\to N_s$ is surjective and birational. Thus there is an isomorphism of graded algebras
$$
\Gamma(Y_s,\mcO_{Y_s})=\bigoplus_{d\geq 0}\Gamma(\Gr,\Sym^d\mcS^\vee)\overset{\sim}{\lr} k[N_s].
$$
\end{enumerate}
\end{theorem}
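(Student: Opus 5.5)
The plan is to treat the two parts separately. Part (1) is a classical characteristic-free statement, which I would import from the literature rather than reprove. Part (2) I would derive from (1) by a Zariski main theorem argument.

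For part (1), I would cite the characteristic-free results on symmetric determinantal ideals: Kutz and De Concini--Procesi, as presented in \cite[\S 6.3]{Weyman_03}. These show that $I_{s+1}$ is prime (in fact it has a straightening-law/standard-monomial basis), so $I_{s+1}=I(N_s)$, and that $k[N]/I_{s+1}$ is Cohen--Macaulay and normal. Here $p\neq 2$ is what allows symmetric matrices to be identified with $\Sym^2\bfF$, i.e. with symmetric bilinear forms. If a self-contained argument is wanted, one would:
\begin{enumerate}
\item show set-theoretically that $V(I_{s+1})$ is the set of rank $\leq s$ symmetric matrices;
\item prove reducedness and primality via a standard-monomial basis on doset tableaux;
\item obtain normality by checking Serre's $R_1$ on the smooth locus of rank exactly $s$ (the complement has codimension $\geq 2$), together with Cohen--Macaulayness from the same standard-monomial theory.
\end{enumerate}
I expect the ideal-theoretic statement $I(N_s)=I_{s+1}$ (reducedness in positive characteristic) to be the genuine obstacle. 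For this reason I would quote it rather than reprove it.

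For part (2), first note that $\pi_s$ is proper, since $\Gr$ is projective, so $Y_s\to N$ is proper. Its image lies in $N_s$ because $X\bfF\subseteq\bfR$ forces $\operatorname{rank}X\leq s$. For surjectivity, given $X\in N_s(k)$, choose any $s$-dimensional $\bfR\supseteq X\bfF$; then $X\in\Sym^2\bfR$, since a symmetric tensor whose image lies in $\bfR$ lies in $\Sym^2\bfR$. For birationality, over the open locus of $N_s$ where $\operatorname{rank}X=s$, the fiber is the single point $\bfR=X\bfF$, and one checks that $\pi_s$ is an isomorphism there. This can be done by exhibiting the inverse $X\mapsto(X,\operatorname{im}X)$, which is a morphism on the rank-$s$ locus because $\operatorname{im}X$ is a rank-$s$ subbundle. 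Since $Y_s$ is irreducible (a vector bundle over $\Gr$) and of the same dimension, $\pi_s$ is birational.

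Now, $\pi_s$ is proper and birational, and $N_s$ is normal by (1). By Zariski's main theorem (Stein factorization), $\pi_{s*}\mcO_{Y_s}=\mcO_{N_s}$. Since $N_s$ is affine, taking global sections gives $\Gamma(Y_s,\mcO_{Y_s})=k[N_s]$. Finally, because $Y_s$ is the total space of $\mcS$, pushing forward along the affine morphism $Y_s\to\Gr$ gives $\Gamma(Y_s,\mcO_{Y_s})=\bigoplus_{d}\Gamma(\Gr,\Sym^d\mcS^\vee)$, graded by $d$. The map $k[N]\to\Gamma(Y_s,\mcO)$ is induced by pulling back linear functions along $\Sym^2\bfF^\vee\to\Sym^2\mcR^\vee$, so it respects the grading. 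This yields the stated graded isomorphism.
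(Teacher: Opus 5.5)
Your proposal is correct and follows the paper's route. For part (1) you quote the characteristic-free theory of De Concini--Procesi, with Weyman \S 6.3 in characteristic zero, exactly as the paper does. For part (2) you write out the standard Kempf-collapsing argument that the paper only invokes by reference: properness, surjectivity, an isomorphism over the rank-$s$ locus, and then normality of $N_s$ together with Zariski's main theorem to get $\pi_{s*}\mcO_{Y_s}=\mcO_{N_s}$, hence the graded isomorphism via $\Gamma(Y_s,\mcO_{Y_s})=\bigoplus_d\Gamma(\Gr,\Sym^d\mcS^\vee)$.
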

\begin{proof}
Part (1) follows from \cite[\S 6.3]{Weyman_03} when $\mathrm{char}(k)=0$, and from \cite{dCP76} in general. Part (2) follows from the standard arguments of \cite[\S 6.3]{Weyman_03} which also apply in positive characteristic.
\end{proof}

\subsection{Isotropic Grassmannian}
Let $(\bfF,q)$ be a nondegenerate quadratic space of dimension $n\geq 2s+1$. 
Recall that we have the Grassmannian $\Gr=\Gr(s,\bfF)$ with tautological bundle $\mcR\subseteq\mcF:=\bfF\otimes_{k}\mcO_{\Gr}$. 

Let $\IGr=\IGr(s,\bfF)$ be the isotropic Grassmannian, defined as the closed subvariety of $\Gr$ consisting of isotropic subspaces. It is a Grassmannian for the orthogonal group associated with a maximal parabolic subgroup. Let $\iota:\IGr\hookrightarrow \Gr$ be the closed immersion. We use the same symbols $\mcR$ and $\mcF$ for their restrictions to $\IGr$.

The symmetric form $q$ induces a symmetric form on the Grassmannian
$$
q:\mcF\times \mcF\lr \mcO_{\Gr},
$$
where $\mcF$ is the universal vector bundle, lifting $\bfF$. By restricting to $\mcR\subset \mcF$, we get
$$
q:\mcR\times \mcR\lr \mcO_{\Gr}.
$$
Since $q$ is symmetric, it factors through the symmetric power $\Sym^2\mcR$. This defines a global section $q|_{\mcR}\in H^0(\Gr,\Sym^2\mcR^\vee)$.

By definition, the regular section $q|_{\mcR}$ vanishes on $\IGr(s,\bfF)$. Recall the following
\begin{proposition}\label{prop:OGr-lci}
The closed immersion $\iota:\IGr\subseteq \Gr$ is a local complete intersection, and $\IGr$ is the scheme-theoretic zero locus of 
$$
q|_{\mcR}\in \Gamma(\Gr,\Sym^2\mcR^\vee).
$$
The section is regular and gives the following exact Koszul resolution on $\Gr$:
$$
0\lr \bigwedge^{\binom{s+1}{2}}\bigl(\Sym^2\mcR\bigr)\lr\cdots\lr\bigwedge^2(\Sym^2\mcR\bigr)\lr\Sym^2\mcR\overset{\psi} {\lr}\mcO_{\Gr}\lr \iota_*\mcO_{\IGr}\lr 0.
$$
\end{proposition}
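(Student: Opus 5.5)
The plan is the standard criterion: the zero locus of a section of a vector bundle of rank $m$ on a Cohen--Macaulay (here smooth) variety is a local complete intersection cut out by a regular sequence, with exact Koszul complex, precisely when it has the expected codimension $m$. Here $\mcE:=\Sym^2\mcR^\vee$ has rank $m=\binom{s+1}{2}$. The argument proceeds in four steps.

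\textbf{Step 1 (local description).} I will work locally on $\Gr$. Fix a point $R\in\Gr$ and a complement $C$, so that an affine chart is $\Hom(R,C)\cong\mbA^{s(n-s)}$, with $\mcR$ trivialized as the graph $\{v+\phi(v)\}$. Choosing a basis $e_1,\dots,e_s$ of $R$, the section $q|_{\mcR}$ becomes the $m$ functions
$$
f_{ij}(\phi)=q(e_i+\phi e_i,e_j+\phi e_j),\qquad 1\le i\le j\le s.
$$
The scheme-theoretic zero locus of $q|_{\mcR}$ is, by definition, the functor of subspaces on which $q$ vanishes. This is exactly the moduli description of $\IGr$ as a closed subscheme of $\Gr$, so the second assertion is essentially tautological once $\IGr$ is defined this way.

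\textbf{Step 2 (dimension count).} I will show that $\IGr$ has codimension exactly $m$ in $\Gr$. By Witt's theorem, $\IGr(k)$ is a single orbit of $O(\bfF)$. Its dimension is $\dim O(\bfF)-\dim P$ for the stabilizer parabolic $P$, which gives
$$
\dim\IGr=s(n-s)-\tbinom{s+1}{2}.
$$
Alternatively, one can count directly: choose $e_1$ isotropic, then $e_2\in e_1^\perp$ isotropic, and so on. Since $n\ge 2s+1$, $\IGr$ is nonempty. Its codimension therefore equals the number of equations, and the $f_{ij}$ form a regular sequence in the regular local rings of $\Gr$ along $\IGr$. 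This uses that regular local rings are Cohen--Macaulay, where a sequence of $c$ elements cutting out a locus of codimension $c$ is regular. Away from $\IGr$ the section generates $\mcO$, so the complex is exact there trivially.

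\textbf{Step 3 (reducedness).} I also want the zero scheme to be reduced, so that it agrees with the reduced variety $\IGr$. For this I will check that the differential of $(f_{ij})$ at an isotropic point $R$ is surjective. The tangent map sends $\phi\in\Hom(R,\bfF/R)$ to the symmetric form $(u,v)\mapsto q(\phi u,v)+q(u,\phi v)$ on $R$. Since $q$ is nondegenerate, $\bfF/R^\perp\cong R^\vee$, so $\phi$ can be chosen to realize any map $R\to R^\vee$. In particular any symmetric one $B$ is attained, for instance as $\phi=\tfrac12 B$ composed with a splitting; this uses $p\neq2$. Hence the zero scheme is smooth of the expected dimension, so it is reduced and equal to $\IGr$. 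Smoothness is also an alternative route to regularity of the sequence.

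\textbf{Step 4 (Koszul complex).} The Koszul complex of the regular section $q|_{\mcR}$ of $\mcE$ is
$$
\bigwedge^\bullet\mcE^\vee=\bigwedge^\bullet(\Sym^2\mcR).
$$
Since the section is regular, this complex is exact and resolves $\iota_*\mcO_{\IGr}$, and it has length $m$.

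The main obstacle is Step 3, the Jacobian computation in characteristic $p$. The key point is identifying $T_R\Gr/T_R\IGr$ with $\Sym^2R^\vee$ (the normal bundle), and the factor $\tfrac12$ is where $p\ne2$ enters.
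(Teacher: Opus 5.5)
Your argument is correct, but it takes a different, self-contained route: the paper gives no argument of its own and simply quotes Weyman's Propositions 4.3.8 and 4.3.10, treating $\dim\bfF$ odd and even separately.

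You prove the statement directly and uniformly in the parity of $n$. Your Jacobian computation at an isotropic point $R$ identifies the normal space $T_R\Gr/T_R\IGr$ with the fibre $\Sym^2R^\vee$ of the bundle. So the zero scheme of $q|_{\mcR}$ is smooth of codimension $\binom{s+1}{2}$. This gives at once that the zero scheme is reduced, hence equal to the variety $\IGr$, that the local equations $f_{ij}$ form a regular sequence, and therefore that the Koszul complex is exact.

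What your route buys is transparency about hypotheses. The only inputs are:
\begin{itemize}
\item nondegeneracy of $q$;
\item $p\neq 2$, used for the factor $\tfrac12$, and also for identifying $\mcE^\vee$ with $\Sym^2\mcR$ when you set $\mcE=\Sym^2\mcR^\vee$ (the section $q|_{\mcR}$ lives more naturally in $(\Sym^2\mcR)^\vee$);
\item nonemptiness of $\IGr$.
\end{itemize}
So the paper's standing bound $n\geq 2s+1$ is not needed here; $n\geq 2s$ suffices. The citation buys brevity and keeps the paper inside Weyman's framework, which it uses throughout.

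One small economy: your Step 3 makes Step 2 redundant. Linearly independent differentials of the $f_{ij}$ at every point of the zero locus already make them part of a regular system of parameters, so the Cohen--Macaulay dimension count, though correct, is not needed.
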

\begin{proof}
When $\dim \bfF$ is odd, this is \cite[Proposition 4.3.8]{Weyman_03}, when $\dim \bfF$ is even, this is \cite[Proposition 4.3.10]{Weyman_03}.
\end{proof}

The following observation plays a crucial role:
\begin{proposition}\label{prop:sym-reduction}
Suppose that $n\geq 2s+1$, and that $p=0$ or $p\geq 2s-1$.
Let $\mcS=\Sym^2\mcR$ on $\Gr$. For $d\geq 0$, $j\geq 0$, and $i> 0$, we have
$$
H^i(\Gr,\bigwedge^j\mcS\otimes \Sym^d\mcS^\vee)=0.
$$
Applying this vanishing to Proposition \ref{prop:OGr-lci}, after tensoring with $\Sym^d\mcS^\vee$ and taking hypercohomology, we obtain the following long exact sequence:
\begin{multline}\label{equ:reduction-les}
0\lr H^0(\Gr,\bigwedge^{\binom{s+1}{2}}\mcS\otimes\Sym^d\mcS^\vee)\lr H^0(\Gr,\bigwedge^{\binom{s+1}{2}-1}\mcS\otimes\Sym^d\mcS^\vee)\lr\cdots\\
\lr H^0(\Gr,\mcS\otimes\Sym^d\mcS^\vee)\lr H^0(\Gr,\Sym^d\mcS^\vee)\lr H^0(\IGr,\Sym^d\mcS^\vee)\lr 0.
\end{multline}
\end{proposition}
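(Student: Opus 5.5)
The vanishing will come from combining the good filtration of Proposition \ref{prop:wedge-good}(2) with the Borel--Weil--Bott statements of Theorem \ref{thm:GLn-BWB}. The bundle $\bigwedge^j\mcS\otimes\Sym^d\mcS^\vee$ is $\mcL_{G/P_s}(\bfW)$ for the $P_s$-module $\bfW=\bigwedge^j(\Sym^2\bfR)\otimes\Sym^d(\Sym^2\bfR^\vee)$, with $P_s$ acting through $\GL(\bfR)$ (Example \ref{exmp:grassmannian}).

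\textbf{Step 1: filter the bundle.} By Proposition \ref{prop:wedge-good}(2), $\bfW$ has a good filtration as a $\GL(\bfR)$-module. The graded pieces are $S_\gamma(\bfR^\vee)$ with $\gamma_s\geq -(s+1)$. The filtration is by $\GL(\bfR)$-submodules, hence by $P_s$-submodules via the projection $P_s\to\GL(\bfR)$. Proposition \ref{prop:exact-fun} says $\mcL_{G/P_s}$ is exact and commutes with Schur functors, so the bundle acquires a filtration whose graded pieces are $S_\gamma(\mcR^\vee)$ with the same bound on $\gamma$. In characteristic $0$ this step is automatic from complete reducibility.

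\textbf{Step 2: vanishing on each piece.} Apply Theorem \ref{thm:GLn-BWB} with $\mu=0$. If $\gamma_s\geq 0$, then $\beta=(\gamma,0,\dots,0)$ is weakly decreasing, and part (1) gives vanishing of all higher cohomology. If $-(s+1)\leq\gamma_s\leq -1$, the hypothesis $n\geq 2s+1$ gives $n-s\geq s+1$, so $\gamma_s\geq -(n-s)$. Part (2) then gives vanishing of all cohomology. In both cases $H^i(\Gr,S_\gamma(\mcR^\vee))=0$ for $i>0$. The long exact cohomology sequences attached to the finite filtration yield $H^i(\Gr,\bigwedge^j\mcS\otimes\Sym^d\mcS^\vee)=0$ for $i>0$.

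Step 2 also shows something stronger that will be useful later: the pieces with $\gamma_s<0$ contribute nothing, not even to $H^0$.

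\textbf{Step 3: the exact sequence.} Tensor the Koszul resolution of Proposition \ref{prop:OGr-lci} with the locally free sheaf $\Sym^d\mcS^\vee$. This keeps it exact, and it becomes a resolution of $\iota_*(\Sym^d\mcS^\vee|_{\IGr})$. Split it into short exact sequences and use the acyclicity from Step 2; equivalently, run the hypercohomology spectral sequence, which degenerates since every term is $\Gamma$-acyclic. Taking global sections then gives the long exact sequence \eqref{equ:reduction-les}, including surjectivity onto $H^0(\IGr,\Sym^d\mcS^\vee)$.

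\textbf{Main obstacle.} The delicate point is Step 1 in positive characteristic. One must ensure that the good filtration of $\bfW$ really transports to a filtration of the bundle with the stated graded pieces. This is why the bound $\gamma_s\geq-(s+1)$ of Proposition \ref{prop:wedge-good}, and hence $p\geq 2s-1$, are needed. With that filtration in hand, the rest is formal.
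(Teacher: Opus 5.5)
Your proposal is correct and follows the paper's proof: you transport the good filtration of Proposition \ref{prop:wedge-good}(2) to the bundle via the exact functor $\mcL_{G/P_s}$, then kill higher cohomology of each graded piece with Theorem \ref{thm:GLn-BWB}(1)/(2) using $-(s+1)\geq -(n-s)$, and you also spell out the splitting of the tensored Koszul complex, which the paper leaves implicit.

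One correction to your closing remark. Transporting the filtration to the bundle is automatic from the exactness of $\mcL_{G/P_s}$. The hypothesis $p\geq 2s-1$ is used only to produce the good filtration on $\bigwedge^j(\Sym^2\bfR)$ via Lemma \ref{lem:goodfil}(3). The bound $\gamma_s\geq -(s+1)$ is a weight estimate independent of $p$, and it is what makes $n\geq 2s+1$ sufficient for Theorem \ref{thm:GLn-BWB}(2).
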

\begin{proof}
We use Proposition \ref{prop:exact-fun} and Example \ref{exmp:grassmannian}. Let $\bfR\subset \bfF$ be a subspace of dimension $s$. Consider the $P_s$-module 
$$
\bigwedge^j(\Sym^2 \bfR)\otimes \Sym^d(\Sym^2\bfR^\vee),
$$ 
with the action given by $P_s\twoheadrightarrow\GL(\bfR)$.
The sheaf considered in the proposition is obtained by applying the functor $\mcL_{G/P_s}(-)$. 

By Proposition \ref{prop:wedge-good}, the $\GL(\bfR)$-representation $\bigwedge^j(\Sym^2 \bfR)\otimes \Sym^d(\Sym^2\bfR^\vee)$ admits a good filtration, with associated graded part given by Schur functors. 
Hence by Proposition \ref{prop:exact-fun}, the sheaf $\bigwedge^j\mcS\otimes \Sym^d\mcS^\vee$ admits a filtration, with associated graded part given by $S_{\lambda}\mcR^\vee$ with $\lambda_s\geq -(s+1)\geq -(n-s)$. By Theorem \ref{thm:GLn-BWB}, the higher cohomology of each associated graded part vanishes, hence so does that of the vector bundle itself.
\end{proof}

\section{Symmetric nilpotent orbit closure}
\subsection{Kempf's collapsing}
Let $k$ be an algebraically closed field.
Let $(\bfF,q)$ be a nondegenerate quadratic space of dimension $n\geq 2s+1$ over the field $k$. Consider 
$$
\bfN:=\{X\in\End(\bfF):X^t=X\}\simeq \Sym^2 \bfF.
$$
This defines a vector space over $k$, let $N$ be the corresponding affine scheme over $k$.
Let $u=(2^s,1^{n-2s})$ be a partition of $n$ and let $N_{u}\subset N$ be the closed subvariety whose geometric points are
$$
N_{u}(k)=\{X\in \bfN(k)\mid X^2=0, \mathrm{rank}(X)\leq s\}.
$$
This is the nilpotent orbit closure for the symmetric pair considered in \cite[\S 5.4]{PR09}.

We consider Kempf's collapsing \cite{Kempf} in this case.
Let $\IGr=\IGr(s,\bfF)$ be the isotropic Grassmannian with universal subbundle $\mcR\subset \mcF$. 
Let $Y_{u}\subset \bfN\times \IGr(s,\bfF)$ be the closed subvariety whose geometric points are
$$
Y_{u}(k)=\{
(X,\bfR)\mid X\bfF\subseteq \bfR, X\bfR=(0)
\}.
$$
Since $X$ is symmetric, $q(X\bfR,\bfF)=q(\bfR,X\bfF)\subseteq q(\bfR,\bfR)=(0)$. Hence the condition $X\bfR=(0)$ is redundant.

\begin{proposition}\label{prop:birational}
\begin{enumerate}
\item The composition 
$$
Y_{u}\to N\times \IGr(s,\bfF)\to \IGr(s,\bfF)
$$
is the total space of $\mcS:=\Sym^2\mcR$, the symmetric power of the universal subbundle.
\item The composition
$$
Y_{u}\to N\times \IGr(s,\bfF)\to N
$$
factors through $N_u$.
\item The morphism $\pi:Y_u\to N_u$ is dominant and birational.
\end{enumerate}
\end{proposition}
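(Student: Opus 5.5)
We treat the three parts in order. The main work is in (3).

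For (1), we write $Y_u$ as the base change of $Y_s$ along $\iota:\IGr\hookrightarrow\Gr$. A pair $(X,\bfR)$ with $X$ symmetric and $X\bfF\subseteq\bfR$ is the same thing as an element of $\Sym^2\bfR\subset\Sym^2\bfF$. Here we identify $\bfN=\Sym^2\bfF$ with self-adjoint endomorphisms via $q$; under this identification, $v\cdot w$ corresponds to $x\mapsto q(x,w)v+q(x,v)w$, whose image lies in $\mathrm{span}(v,w)$. Conversely, a self-adjoint $X$ with image in $\bfR$ lies in $\Sym^2\bfR$: pick a complement and compute in a basis adapted to $\bfR$. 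This gives $Y_s=\mathrm{Tot}(\Sym^2\mcR)\subset N\times\Gr$ as a sub-bundle of the trivial bundle $\Sym^2\mcF$. Since $Y_u=Y_s\times_{\Gr}\IGr$ (the condition $X\bfR=0$ being redundant, as noted), $Y_u$ is the total space of $\mcS|_{\IGr}$. This is a statement about functors of points, valid over any base, so $Y_u$ is smooth and irreducible of dimension $\dim\IGr+s(s+1)/2$.

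For (2), since $Y_u$ is reduced (being smooth), it suffices to check on geometric points. If $X\bfF\subseteq\bfR$, then $\mathrm{rank}X\leq s$. Moreover $X^2\bfF\subseteq X\bfR$, and we compute
$$
q(X\bfR,\bfF)=q(\bfR,X\bfF)\subseteq q(\bfR,\bfR)=0,
$$
so $X\bfR=0$ by nondegeneracy, hence $X^2=0$. Because $N_u$ is reduced and closed, the projection factors through $N_u$.

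For (3), the map $\pi$ is proper, since $\IGr$ is projective, so dominance is equivalent to surjectivity on $k$-points. Take $X\in N_u(k)$ and set $\bfR_0=\im X$, which has dimension at most $s$. Because $X^2=0$ and $X$ is self-adjoint,
$$
q(Xa,Xb)=q(a,X^2b)=0,
$$
so $\bfR_0$ is isotropic. We then need to enlarge $\bfR_0$ to an isotropic $s$-dimensional subspace. This is possible because maximal isotropic subspaces have dimension $\lfloor n/2\rfloor\geq s$ and every isotropic subspace extends to a maximal one (Witt). So $(X,\bfR)\in Y_u$, and $\pi$ is surjective.

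For birationality, let $U\subset N_u$ be the open locus where $\mathrm{rank}X=s$. It is nonempty, since an isotropic $\bfR$ of dimension $s$ together with a nondegenerate element of $\Sym^2\bfR$ gives rank exactly $s$. Over $U$ the fiber is the single point $\bfR=\im X$. Moreover, $\pi^{-1}(U)\to U$ is an isomorphism, because $\bfR=\im X$ is recovered functorially: on $U$, the image of $X$ is a rank-$s$ subbundle, since the rank is locally constant there. This gives an inverse morphism $U\to\pi^{-1}(U)$.

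The main subtlety is whether $N_u$ is irreducible, so that "birational" makes sense, and whether $U$ is dense in it. Both follow from surjectivity: $N_u=\pi(Y_u)$ is the image of an irreducible variety, hence irreducible, and $\pi^{-1}(U)$ is a nonempty open, hence dense, subset of $Y_u$. The scheme-theoretic inverse on $U$ also needs care. We would define it via the Fitting ideal: on $U$ the $s\times s$ minors do not all vanish, so $\coker X$ is locally free of rank $n-s$ and $\im X$ is a subbundle, as required.
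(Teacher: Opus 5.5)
Your proposal is correct and follows essentially the same route as the paper. Part (1) is the base change of the $\Gr(s,\bfF)$ description of $Y_s$ along $\IGr\hookrightarrow\Gr$, and part (2) is the computation $q(X\bfR,\bfF)=q(\bfR,X\bfF)=0$. For part (3) the paper records the fiber as $\pi^{-1}(X)=\IGr(s-\operatorname{rank}X,(X\bfF)^\perp/X\bfF)$; its nonemptiness is your Witt-extension argument, and it is the single point $X\bfF$ over the open rank-$s$ orbit. The details you add are correct and fill in what the paper leaves implicit: the Fitting-ideal inverse on the rank-$s$ locus, and the irreducibility of $N_u$ as the image of $Y_u$. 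The latter uses that $\IGr(s,\bfF)$ is irreducible, which holds here because $n\geq 2s+1$.
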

\begin{proof}
Part (1) follows by pulling back the $\Gr(s,\bfF)$ case of \cite[\S 6.3]{Weyman_03}.
Part (2) follows directly from the definition. 
For part (3), notice that for any $X\in N_{u}$, we have an isomorphism
$$
\pi^{-1}(X)=\IGr(s-\operatorname{rank}(X),(X\bfF)^\perp/X\bfF).
$$
In particular, $\pi$ is an isomorphism over the open orbit.
\end{proof}

We will show in Proposition \ref{prop:normal-orbit-closure} that $N_u$ is normal (when $p=0$, it follows from \cite[Theorem 1]{Trevisiol24}), by relating $N_u$ to Schubert varieties in the affine Grassmannian. Hence $\Gamma(Y_u,\mcO_{Y_u})=\Gamma(N_u,\mcO_{N_u})$ by Proposition \ref{prop:birational}.

Consider now the composition
$$
Y_u\lr N_u\lr N.
$$
Our goal then reduces to finding the kernel of the morphism
$$
\Gamma(N,\mcO_N)\twoheadrightarrow\Gamma(Y_u,\mcO_{Y_{u}})=\bigoplus_{d\geq 0}H^0(\IGr,\Sym^d(\Sym^2\mcR^\vee)).
$$

\begin{remark}
It is possible to find the defining equations for the nilpotent orbit closure directly. Over the complex numbers, this would also follow from the basic theorem in \cite[\S 5.1]{Weyman_03}, since it is possible to calculate all cohomology groups explicitly.
\end{remark}

\subsection{The main theorem}
Recall that $\mcS=\Sym^2\mcR$.
Consider the graded $k[N_s]$-module
$$
\bfM=\bigoplus_{d\geq 0}\bfM_d=\bigoplus_{d\geq 0} H^0(\Gr,\mcS\otimes \Sym^d\mcS^\vee).
$$
By Proposition \ref{prop:sym-reduction}, we have a right exact sequence
$$
\bfM\overset{\alpha}{\lr} k[N_s]\lr k[N_u]\lr 0.
$$
We now introduce two families of elements of $\bfM$.

\begin{enumerate}
\item Let $\tau\in\Gamma(\Gr,\mcS\otimes\mcS^\vee)=\bfM_1$ be the identity element. 
\item Consider the homomorphism of vector bundles
$$
\Phi:\mcS^\vee \lr \mcS\otimes \Sym^2(\mcS^\vee),\quad U\mapsto \bigl[X\mapsto XUX \bigr],
$$
here we view an element of the right hand side as an element of the sheaf hom $\underline{\Hom}(\mcS^\vee,\Sym^2(\mcS^\vee))$. 
By taking global sections, we obtain
$$
\Phi:\bfN^\vee\lr \bfM_2.
$$
\end{enumerate}

In the next section, we will prove the following theorem.
\begin{theorem}\label{thm:surjective-multi}
The $k[N_s]$-module $\bfM$ is generated by $\tau\in \bfM_1$ and the sections $\Phi(\lambda)\in \bfM_2$ for $\lambda\in \bfN^\vee$. Equivalently, we have
$$
\bfM_0=0,\quad
\bfM_1=k\cdot\tau,\quad
\bfM_2=\bfN^\vee\cdot \tau+\Phi(\bfN^\vee),
$$
and for every $d\geq 3$, the multiplication map
$$
\Bigl(\Gamma(\Gr,\Sym^{d-1}\mcS^\vee)\otimes \bfM_1\Bigr)
\oplus 
\Bigl(\Gamma(\Gr,\Sym^{d-2}\mcS^\vee)\otimes \bfN^\vee\Bigr)\lr \bfM_d
$$
is surjective.
\end{theorem}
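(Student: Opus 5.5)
We compute $\bfM_d=H^0(\Gr,\mcS\otimes\Sym^d\mcS^\vee)$ as a $\GL(\bfF)$-representation, and we check surjectivity one isotypic piece at a time. The vanishing of higher cohomology (Proposition \ref{prop:sym-reduction}) makes global sections exact along the filtrations of Proposition \ref{prop:wedge-good}. Concretely, $\mcS\otimes\Sym^d\mcS^\vee$ has a filtration with graded pieces $S_\gamma(\mcR^\vee)$, where $\gamma_s\geq -(s+1)$. So $\bfM_d$ has a filtration with graded pieces $H^0(\Gr,S_\gamma\mcR^\vee)$. By Theorem \ref{thm:GLn-BWB}(1),(2), such a piece is $S_\gamma(\bfF^\vee)$ if $\gamma_s\geq 0$, and it is $0$ if $-(n-s)\leq\gamma_s\leq -1$.

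The first step is to decompose $\Sym^2\bfR\otimes\Sym^d(\Sym^2\bfR^\vee)$. We have $\Sym^d(\Sym^2\bfR^\vee)\approx\bigoplus S_\mu(\bfR^\vee)$ with $\mu$ even and $|\mu|=2d$, and $\Sym^2\bfR=S_{(0,\dots,0,-2)}(\bfR^\vee)$. By Pieri, the tensor product contributes the pieces $S_\mu\otimes S_{(0,\dots,-2)}$, which give weights $\mu-\epsilon_i-\epsilon_j$. Only those with $\gamma_s\geq 0$ survive. When $d=0$ every piece has $\gamma_s<0$, so $\bfM_0=0$. When $d=1$ only $\gamma=0$ survives, which gives $\bfM_1=k\tau$. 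For $d\geq 2$, the surviving $\gamma$ are partitions of size $2d-2$ obtained from an even $\mu$ by removing two boxes from distinct columns. These are either even partitions, or partitions with exactly two odd parts.

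The second step is to match these pieces with the images of the two multiplication maps. The image of $\Gamma(\Sym^{d-1}\mcS^\vee)\otimes\tau$ accounts for the even $\gamma$, namely $\Gamma(\Sym^{d-1}\mcS^\vee)=k[N_s]_{d-1}$, which is $\bigoplus S_\gamma(\bfF^\vee)$ with $\gamma$ even and at most $s$ rows. The map $\Phi$ is $\GL(\bfF)$-equivariant and nonzero, and $\bfN^\vee=S_{(2)}(\bfF^\vee)$. Its multiples by $k[N_s]_{d-2}$ lie in $S_{(2)}\otimes S_{\text{even}}$, and by Pieri these account for the pieces $\gamma=\nu+(\text{two boxes in distinct columns})$. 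This is exactly the set of partitions with two odd parts arising above. I would then check that the multiplicities agree, so that both sides are multiplicity-free or have equal multiplicity counts. The case $d=2$ gives the statement about $\bfM_2$.

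The main obstacle is surjectivity in positive characteristic. In that setting $\bfM_d$ is not semisimple, so matching characters does not prove surjectivity. I would first try to argue via highest weight vectors. Each good-filtration factor $S_\gamma(\bfF^\vee)$ is generated as a $\GL(\bfF)$-module by its highest weight line (after dualizing appropriately: the costandard module has a simple socle). It therefore suffices to show that the image of the multiplication map, which is a submodule with a good filtration, has the same character as $\bfM_d$. I would establish this by computing explicitly, on the big cell of $\Gr$, that for each $\gamma$ a specific product of $\tau$ or $\Phi(\lambda)$ with a product of principal minors restricts to a nonzero highest-weight section. Failing that, I would reduce to the fiber at one point $\bfR$ and use Frobenius reciprocity $H^0(\Gr,\mcL(\bfW))=\mathrm{Ind}_{P_s}^{G}\bfW$. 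Surjectivity then follows from surjectivity of the $P_s$-module map $\bfW'\to\Sym^2\bfR\otimes\Sym^d(\Sym^2\bfR^\vee)$ onto the part with $\gamma_s\geq0$, together with exactness of induction on good-filtration modules (Lemma \ref{lem:goodfil}).
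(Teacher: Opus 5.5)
Your setup is correct: the filtration of $\mcS\otimes\Sym^d\mcS^\vee$ with factors $S_\gamma(\mcR^\vee)$, $\gamma_s\geq-(s+1)$, the resulting description of $\bfM_d$, and the cases $d=0,1$. The gap is the surjectivity itself, and it is already present in characteristic $0$. Comparing the characters of source and target cannot prove surjectivity, because the source is strictly bigger than $\bfM_d$. An even $\gamma$ with exactly $s$ rows occurs once in $\Gamma(\Gr,\Sym^{d-1}\mcS^\vee)\otimes\bfM_1$ and once for each removable row in $\Gamma(\Gr,\Sym^{d-2}\mcS^\vee)\otimes\bfN^\vee$. In $\bfM_d$ it occurs only as often as it has addable rows among the first $s$, which is one fewer. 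Moreover, $\bfN^\vee\otimes k[N_s]_{d-2}$ contains pieces with $s+1$ rows that must map to zero.

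Your bookkeeping is also wrong. Even $\gamma$ occur in $\bfM_d$ with multiplicity up to $s$; for example, $\bfM_2\cong S_{(2)}(\bfF^\vee)^{\oplus 2}$ in characteristic $0$ for $s\geq 2$. So $\tau$ does not account for them alone: the remaining copies come from $\Phi$ via $\nu+2\epsilon_i$, and neither side is multiplicity-free. You must show, for instance, that $\bfN^\vee\cdot\tau$ and $\Phi(\bfN^\vee)$ are independent copies, and in general you need to know the kernel of the map. The paper does this fiberwise. It shows $\ker\Psi_d=(\det,-\adj)^t\bigl(\bfP_{d-s-1}\otimes(\det\bfR^\vee)^{\otimes 2}\bigr)$ (Lemma \ref{lem:good-fil-surj}, using that $\det$ is coprime to the entries of $\adj$). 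A Schur--Weyl argument (Proposition \ref{prop:char0-calculation}) then shows that in characteristic $0$ the image of $\Psi_d$ is the entire $\gamma_s\geq 0$ part of $\Sym^2\bfR\otimes\bfP_d$.

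In characteristic $p$, your first strategy rests on a false premise. The highest weight line of $\nabla_\gamma=S_\gamma(\bfF^\vee)$ generates only its simple socle $L_\gamma$; it is the Weyl module $\Delta_\gamma$ that is cyclic on its highest weight vector. So exhibiting nonzero highest weight sections in the image does not show that whole factors are hit. Nothing in your plan computes the character of the image either, and the image has no a priori reason to carry a good filtration.

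Your fallback via $\mathrm{Ind}_{P_s}^G$ is the paper's route, but it omits both essential inputs.
\begin{enumerate}
\item[(a)] You need fiberwise surjectivity of $\Psi_d$ onto the $\gamma_s\geq0$ part in characteristic $p$. The paper gets this from the explicit kernel, which gives $\im\Psi_d$ a good filtration (Lemma \ref{lem:goodfil}(1)) and a $p$-independent character, hence the characteristic-$0$ answer. One then needs specific vanishings: $H^1=0$ for the kernel bundle (factors with $\lambda_s\geq 2$), and vanishing of all cohomology of the cokernel bundle (using $-(n-s)\leq-(s+1)\leq\gamma_s<0$, i.e.\ $n\geq 2s+1$). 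There is no general exactness of induction in Lemma \ref{lem:goodfil} to appeal to.
\item[(b)] Step (a) yields surjectivity only from $\Gamma(\Gr,\Sym^{d-1}\mcS^\vee)\oplus\Gamma(\Gr,\mcS^\vee\otimes\Sym^{d-2}\mcS^\vee)$, whereas the theorem uses $\bfN^\vee\otimes\Gamma(\Gr,\Sym^{d-2}\mcS^\vee)$. You still need the map $\bfN^\vee\otimes\Gamma(\Gr,\Sym^{d-2}\mcS^\vee)\to\Gamma(\Gr,\mcS^\vee\otimes\Sym^{d-2}\mcS^\vee)$ to be surjective. The paper proves this via the Schur complex $0\to\bigwedge^2\mcQ^\vee\to\mcQ^\vee\otimes\mcF^\vee\to\Sym^2\mcF^\vee\to\mcS^\vee\to0$ (Theorem \ref{thm:Schur-complex}) together with Theorem \ref{thm:GLn-BWB}(3).
\end{enumerate}
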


\begin{lemma}\label{lem:alpha-calculate}
\begin{enumerate}
\item We have $\alpha(\tau)=\tr(X)\in k[N_s]_1$;
\item For any $\lambda\in \bfN^\vee$, we have
$$
\alpha(\Phi(\lambda))=\lambda(X^2)\in k[N_s]_2
$$
as functions on $N_s$.
\end{enumerate}
\end{lemma}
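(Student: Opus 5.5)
We will compute $\alpha$ by unwinding the Koszul map $\psi:\mcS\to\mcO_{\Gr}$ of Proposition \ref{prop:OGr-lci} fiberwise and pairing it with the identification of Theorem \ref{thm:sym-determ}(2). Fix a point $\bfR\in\Gr$. The fiber of the total space $Y_s$ over $\bfR$ is $\Sym^2\bfR$, viewed as symmetric matrices $X$ with $X\bfF\subseteq\bfR$. A section of $\Sym^d\mcS^\vee$ is a degree-$d$ polynomial function on these fibers. Under Theorem \ref{thm:sym-determ}(2) it corresponds to the function on $N_s$ obtained by evaluating at $X$ in the fiber over any $\bfR\supseteq X\bfF$.

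The map $\alpha$ is induced by $\psi\otimes\id:\mcS\otimes\Sym^d\mcS^\vee\to\Sym^d\mcS^\vee$. Here $\psi:\Sym^2\mcR\to\mcO$ is the contraction with $q|_{\mcR}\in\Sym^2\mcR^\vee$, so $e\,e'\mapsto q(e,e')$. An element of $\mcS\otimes\Sym^d\mcS^\vee$ is a $\Sym^2\bfR$-valued polynomial $G(X)$ of degree $d$. Its image under $\alpha$ is therefore the function $X\mapsto q(G(X))$, where $q$ is extended linearly to $\Sym^2\bfR$. In an orthonormal basis of $\bfF$, a symmetric matrix $Y\in\Sym^2\bfR\subset\Sym^2\bfF$ satisfies $q(Y)=\tr(Y)$. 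One must check the normalization of the identification $\Sym^2\bfF\simeq\bfN$ here: with $e e'\mapsto \tfrac12(e e'^t+e'e^t)$ the pairing is exactly the trace, and since $p\ne2$ any constant factor is harmless up to a unit. The first step is to fix this convention once and for all.

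For (1), $\tau$ is the identity section, i.e.\ $G(X)=X$. Hence $\alpha(\tau)=q(X)=\tr(X)$, which is globally defined on $N_s$.

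For (2), take $\lambda\in\bfN^\vee$. Restricting $\lambda$ to $\Sym^2\bfR$ gives a section $U$ of $\mcS^\vee$. Then $\Phi(\lambda)$ is the $\Sym^2\bfR$-valued quadratic $X\mapsto XUX$, where $U$ is interpreted as a map $\bfR\to\bfR^\vee$ and $X$ as a map $\bfR^\vee\to\bfR$. Applying $\psi$ gives $q(XUX)=\tr(XUX)=\tr(UX^2)$. We now use that $X\bfF\subseteq\bfR$ and $X$ is symmetric, so $X$ kills $\bfR^\perp$. It follows that $X^2$ lies in $\Sym^2\bfR$ (this does not need $\bfR$ isotropic), and pairing $U$ against it computes $\lambda(X^2)$. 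Thus $\alpha(\Phi(\lambda))=\lambda(X^2)$ on the dense open locus where $\operatorname{rank}X=s$. Since $N_s$ is reduced (indeed normal, by Theorem \ref{thm:sym-determ}(1)), the identity extends to all of $N_s$.

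The main obstacle is bookkeeping rather than ideas. One must make precise the identification between $\mcS\otimes\Sym^2\mcS^\vee$ and quadratic maps, and between $\bfR^\vee$ and $\bfR$-valued pairings via $q$ and the embedding $\Sym^2\bfR\subset\Sym^2\bfF$. Under these identifications the composite ``$U$ sandwiched between two copies of $X$, then contracted with $q$'' must equal $\tr(\tilde U X^2)$, where $\tilde U$ is the matrix representing $\lambda$ under the trace pairing. I would verify this in coordinates adapted to $\bfR=\langle e_1,\dots,e_s\rangle$, with $X$ supported in the top-left $s\times s$ block.
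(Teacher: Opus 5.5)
Your proposal is correct and follows essentially the same route as the paper: you identify $\alpha$ with contraction against $q|_{\mcR}$ and compute in an orthonormal basis, obtaining $\tr(X)$ for $\tau$ and $\tr(X\tilde U X)=\tr(\tilde U X^2)=\lambda(X^2)$ for $\Phi(\lambda)$; the paper instead evaluates explicitly on the spanning set $\lambda=E_{ij}+E_{ji}$ rather than using cyclicity of the trace. Your detour through the rank-$s$ locus and the reducedness of $N_s$ is harmless but unnecessary, since the fiberwise computation already holds at every point $(X,\bfR)$ of $Y_s$.
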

\begin{proof}
Recall from Proposition \ref{prop:OGr-lci} that $\alpha$ is defined by evaluating the quadratic form $q|_{\mcR}:\mcS\to \mcO$. Fix an orthonormal basis $e_1,\cdots,e_n$ so that $q(e_i,e_j)=\delta_{ij}$.
For part (1), we have $\alpha(\sum_{ij}x_{ij}\otimes x_{ij}^\vee)=\sum_{i}x_{ii}^\vee=\tr(X)$.
For part (2), we have
$$
\Phi(E_{ij}+E_{ji})=X\mapsto (x_{ai}x_{jb}+x_{aj}x_{ib})_{ab},
$$
hence $\alpha(\Phi(E_{ij}+E_{ji}))=2\sum_k x_{ik}x_{kj}=2(X^2)_{ij}$ (note that $X=X^t$). The assertion follows by spanning.
\end{proof}

Now we are ready to prove the main theorem.
\begin{theorem}\label{thm:main-equation}
Suppose $p=0$, or $p\geq 2s-1$ ($p>2$). Let $(\bfF,q)$ be a nondegenerate quadratic space of dimension $n\geq 2s+1$. The subvariety $N_u\subset N$ has defining ideal
$$
I(N_u)=\bigl(
\tr(X),(X^2)_{ij},I_{s+1}(X)
\bigr)\subset k[N].
$$
\end{theorem}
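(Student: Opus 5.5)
The plan is to assemble the pieces already set up in this section. We want the kernel of $k[N]\to k[N_u]$. This map factors as $k[N]\twoheadrightarrow k[N_s]\twoheadrightarrow k[N_u]$.

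First, by Theorem \ref{thm:sym-determ}(1), the kernel of the first map is $I_{s+1}(X)$. Here we identify $k[N_s]$ with $\bigoplus_d\Gamma(\Gr,\Sym^d\mcS^\vee)$ via Theorem \ref{thm:sym-determ}(2).

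Second, we need the target. Normality of $N_u$ (Proposition \ref{prop:normal-orbit-closure}) together with the birationality of $\pi:Y_u\to N_u$ (Proposition \ref{prop:birational}) identifies $k[N_u]$ with $\Gamma(Y_u,\mcO_{Y_u})=\bigoplus_d H^0(\IGr,\Sym^d\mcS^\vee)$. Here we also use that $\pi$ is proper and surjective onto the reduced variety $N_u$. The map $k[N_s]\to k[N_u]$ is then the restriction from $\Gr$ to $\IGr$. By the exact sequence \eqref{equ:reduction-les} of Proposition \ref{prop:sym-reduction}, which uses the hypotheses $p=0$ or $p\ge 2s-1$ and $n\ge 2s+1$, this restriction is surjective with kernel $\alpha(\bfM)$. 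Hence
$$
I(N_u)=I_{s+1}(X)+\widetilde{\alpha(\bfM)},
$$
where the tilde denotes the preimage in $k[N]$.

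Third, we compute $\alpha(\bfM)$. The map $\alpha$ is $k[N_s]$-linear, since it is induced by contraction with the section $q|_{\mcR}$, which is linear over $\Sym^\bullet\mcS^\vee$. Theorem \ref{thm:surjective-multi} says $\bfM$ is generated as a $k[N_s]$-module by $\tau$ and the $\Phi(\lambda)$ for $\lambda\in\bfN^\vee$. Therefore $\alpha(\bfM)$ is the ideal of $k[N_s]$ generated by $\alpha(\tau)$ and the $\alpha(\Phi(\lambda))$.

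By Lemma \ref{lem:alpha-calculate}, these generators are $\tr(X)$ and $\lambda(X^2)$. As $\lambda$ runs over $\bfN^\vee$, the functions $\lambda(X^2)$ span the entries $(X^2)_{ij}$; the factor $2$ in the proof of that lemma is harmless because $p\neq 2$. Lifting to $k[N]$ gives the claimed ideal $(\tr(X),(X^2)_{ij},I_{s+1}(X))$.

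The genuine content sits in two earlier inputs: the generation statement Theorem \ref{thm:surjective-multi}, and the normality of $N_u$. The step needing the most care here is the identification $k[N_u]=\Gamma(Y_u,\mcO)$. Without normality we would only obtain the ideal of the scheme-theoretic image, not of the reduced subvariety. Birationality plus properness gives $\Gamma(Y_u,\mcO)$ as a finite birational extension of $k[N_u]$, and normality makes it equal. One consistency check is that the ideal so obtained is radical, which follows automatically because $\Gamma(Y_u,\mcO)$ is reduced.
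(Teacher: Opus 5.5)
Your proposal is correct and follows the paper's proof. It reduces via $I(N_s)=I_{s+1}(X)$, identifies $k[N_u]$ with $\Gamma(Y_u,\mcO_{Y_u})$, reads off $\ker(k[N_s]\to k[N_u])=\alpha(\bfM)$ from \eqref{equ:reduction-les}, and obtains the generators $\tr(X),(X^2)_{ij}$ from Theorem \ref{thm:surjective-multi} and Lemma \ref{lem:alpha-calculate}. One correction to your closing remark: since $Y_u$ is reduced and maps onto $N_u$, the kernel of $k[N]\to\Gamma(Y_u,\mcO_{Y_u})$ is already $I(N_u)$, so together with the surjectivity in \eqref{equ:reduction-les} normality is not actually needed for this step (invoking it, as the paper does, is harmless).
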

\begin{proof}
By Theorem \ref{thm:surjective-multi} and Lemma \ref{lem:alpha-calculate}, the kernel
$$
\ker(k[N_s]\lr k[N_u])\cong \alpha(\bfM)
$$
is generated by $\tr(X)$ and $(X^2)_{ij}$. Since $N_s\subset N$ has defining ideal $I(N_s)=(I_{s+1}(X))\subset k[N]$, the assertion follows.
\end{proof}

\section{The multiplication maps}\label{sec:multi-maps}
In this section, we complete the proof of the main result by proving Theorem \ref{thm:surjective-multi}. We keep the notation of the previous section, in particular, $\Gr=\Gr(s,\bfF)$ with tautological subbundle $\mcR$, and $\mcS=\Sym^2\mcR$. Recall that
$$
\bfM_d=\Gamma(\Gr,\mcS\otimes \Sym^d(\mcS^\vee)).
$$

\subsection{Modular representation calculation}\label{sec:multi-maps-rep}
We first study the analogous problem for representations of $\GL_s(k)$. Let $\bfR$ be a vector space over $k$ of dimension $s$. Fix a basis $x_1,\cdots,x_s$ of $\bfR$, with dual basis $x_1^\vee,\cdots,x_s^\vee$ of $\bfR^\vee$. Let
$$
\bfP:=\bigoplus_{d\geq 0}\bfP_d,\quad \bfP_d:=\Sym^d(\Sym^2(\bfR^\vee)).
$$
Consider
$$
\tau=\sum_{a,b}x_ax_b\otimes x_a^\vee x_b^\vee\in \Sym^2\bfR\otimes \Sym^2(\bfR^\vee).
$$
And 
$$
\Phi:\Sym^2 \bfR^\vee\otimes \bfP_{d-2}\lr \Sym^2 \bfR\otimes \bfP_d,\quad u_1u_2\mapsto \sum_{a,b}x_ax_b\otimes (x_a^\vee u_1)(x_b^\vee u_2),\quad u_1,u_2\in \bfR^\vee.
$$
It is a $\bfP$-linear map of degree $2$.
Consider 
$$
\Psi_d: \bfP_{d-1}\oplus (\Sym^2 \bfR^\vee\otimes \bfP_{d-2})\lr \Sym^2 \bfR\otimes \bfP_d,\quad (f,b)\mapsto f\tau+\Phi(b).
$$

\begin{lemma}\label{lem:good-fil-surj}
Assume $p\neq 2$. Then 
\begin{enumerate}
\item The map $\Phi$ is injective.
\item Let $\det(X)\in \bfP_s$ be the determinant and let $\adj\in \Sym^2(\bfR^\vee)\otimes \bfP_{s-1}$ be the adjoint matrix. Then we have $\GL(\bfR)$-equivariant isomorphisms
$$
(\det \bfR^\vee)^{\otimes 2}\simeq k\cdot \det\subset \bfP_s,\quad
(\det \bfR^\vee)^{\otimes 2}\simeq k\cdot \adj\subset \Sym^2 \bfR^\vee\otimes \bfP_{s-1}. 
$$
\item We have a left exact sequence
$$
0\lr \bfP_{d-s-1}\otimes (\det \bfR^\vee)^{\otimes 2}\overset{(\det,-\adj)^t}\lr \bfP_{d-1}\oplus (\Sym^2 \bfR^\vee\otimes \bfP_{d-2})\overset{\Psi_d}{\lr} \Sym^2\bfR\otimes \bfP_d.
$$
Therefore, the image $\im(\Psi_d)$ has a good filtration.

\item The formal character
$$
\ch(\im\Psi_d)=\ch \bfP_{d-1}+\ch(\Sym^2 \bfR^\vee)\cdot \ch \bfP_{d-2}-\ch((\det \bfR^\vee)^{\otimes 2})\cdot \ch \bfP_{d-1-s},
$$
is independent of $p$.
\end{enumerate}
\end{lemma}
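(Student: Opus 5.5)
The idea is to work with the coordinate description: interpret $\bfP=\Sym(\Sym^2\bfR^\vee)$ as the polynomial ring $k[X]$ in the entries of a generic symmetric $s\times s$ matrix $X\in\Sym^2\bfR$. Then $\Sym^2\bfR\otimes\bfP_d$ is the space of symmetric-matrix-valued polynomial maps in $X$ that are homogeneous of degree $d$. Under this dictionary:
\begin{itemize}
\item $\tau$ is the identity map $X\mapsto X$;
\item $\Phi(U)$ is the map $X\mapsto XUX$, extended $\bfP$-linearly, so a symmetric matrix $B$ with entries in $\bfP_{d-2}$ goes to $XBX$;
\item consequently $\Psi_d(f,B)=fX+XBX$.
\end{itemize}
The $\GL(\bfR)$-action is $X\mapsto gXg^t$ with the induced action on coefficients. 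Everything will then be argued over the fraction field $K=\mathrm{Frac}(\bfP)$, where $X$ is invertible.

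For (1), if $XBX=0$ then multiplying by $X^{-1}$ on both sides over $K$ gives $B=0$.

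For (2), $\det(gXg^t)=\det(g)^2\det(X)$, so $k\cdot\det\subset\bfP_s$ is a line on which $\GL(\bfR)$ acts through $\det^{\pm2}$. Since $\adj(X)=\det(X)X^{-1}$, the same calculation shows that $k\cdot\adj$ carries the same character. With the dual conventions fixed, this identifies both lines with $(\det\bfR^\vee)^{\otimes 2}$.

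For (3), first check that the composition vanishes: $\det\cdot X-X\,\adj\,X=\det X-\det X=0$, since $X\,\adj(X)=\det(X)\cdot I$. Injectivity of $(\det,-\adj)^t$ is clear because $\det\neq0$. For exactness in the middle, suppose $fX+XBX=0$. Multiplying by $X^{-1}$ over $K$ gives $BX=-fI$, hence
$$
B=-fX^{-1}=-\frac{f\,\adj(X)}{\det(X)}.
$$
Now use that $\bfP$ is a UFD, together with two facts:
\begin{itemize}
\item $\det(X)$ of a generic symmetric matrix is irreducible in any characteristic. Induct on $s$: $\det X$ is linear in $x_{11}$ with coefficient the $(s-1)$-determinant, and the case $s=1$ is trivial.
\item The entries of $\adj$ have no common factor. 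For $s\ge2$, the diagonal cofactors $\adj_{11}$ and $\adj_{22}$ are distinct irreducible determinants; for $s=1$, $\adj=1$.
\end{itemize}
It follows that $\det\mid f$. Writing $f=g\det$ gives $B=-g\,\adj$ with $g\in\bfP_{d-s-1}$, which is exactly the claimed kernel. Finally, equivariance of all the maps follows from the coordinate-free definitions, using (2) to identify the twisting line.

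For the good filtration, I would argue as follows. Each $\bfP_e$ has a good filtration by Theorem \ref{thm:schur-decomp}(2) together with $\nabla_\lambda=S_\lambda$. Hence $\Sym^2\bfR^\vee\otimes\bfP_{d-2}$ and $\bfP_{d-s-1}\otimes(\det\bfR^\vee)^{\otimes2}$ have good filtrations by Lemma \ref{lem:goodfil}(2). The short exact sequence
$$
0\lr \bfP_{d-s-1}\otimes(\det\bfR^\vee)^{\otimes 2}\lr \bfP_{d-1}\oplus(\Sym^2\bfR^\vee\otimes\bfP_{d-2})\lr \im\Psi_d\lr 0
$$
is then of the shape required by Lemma \ref{lem:goodfil}(1) with $n=2$, so $\im\Psi_d$ has a good filtration.

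For (4), additivity of formal characters in this sequence gives the stated formula. It is independent of $p$ because the characters of $\bfP_e$ are those of the polynomial ring, and the characters of $\Sym^2\bfR^\vee$ and of the determinant twist are characteristic-free.

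The main point needing care is the divisibility step in (3), namely irreducibility of the symmetric determinant and coprimality of the cofactors in positive characteristic. Both are handled by the elementary inductions above. Only $p\ne2$ is used, to identify $\Sym^2$ with symmetric matrices and $\tau$ with the identity map.
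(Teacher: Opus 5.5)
Your proposal is correct and follows the same route as the paper: reduce $\ker\Psi_d$ to the identity $fX+XBX=0$, use that $\det X$ is coprime to the entries of $\adj(X)$ to get $\det X\mid f$ and $B=-g\,\adj(X)$, then apply Lemma \ref{lem:goodfil}(1) to the resulting short exact sequence and use additivity of characters (inverting $X$ over the fraction field is the same computation as the paper's multiplication by $\adj(X)$). The one step to tighten is your induction for irreducibility of the symmetric determinant: writing $\det X=x_{11}D'+E$ with $D'$ the irreducible $(s-1)$-minor is not enough by itself, since you also need $D'\nmid E$; this follows by evaluating at a symmetric matrix where $D'$ vanishes but $\det X\neq 0$ (for instance $x_{12}=x_{21}=1$, $x_{jj}=1$ for $j\geq 3$, all other entries $0$).
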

\begin{proof}
Part (1) is straightforward from the coordinate expression. For part (2), for any $g\in \GL(\bfR)$, we have
$$
(g\cdot \det)(X)=\det(g^{-1}Xg^{-t})=\det(g)^{-2}\det(X).
$$
And (when $X$ is invertible) we have
$$
(g\cdot \adj)(X)=g^{-t}\adj(g^{-1}Xg^{-t})g^{-1}=g^{-t}\det(g^{-1}Xg^{-t})(g^{-1}Xg^{-t})^{-1}g^{-1}=\det(g)^{-2}\adj(X),
$$
hence both $k\cdot \det$ and $k\cdot \adj$ are isomorphic to $(\det \bfR^\vee)^{\otimes 2}$.

For part (3), it is straightforward to see that the kernel contains the image of $(\det,-\adj)$.
Suppose 
$$
(f,b)\in \ker\Psi\subset \bfP_{d-1}\oplus (\Sym^2 \bfR^\vee\otimes \bfP_{d-2}),$$
then we have
$$
fX+XbX=0\text{ for all } X\in \Sym^2(\bfR).
$$
This is an identity of symmetric matrices whose entries are polynomials. Multiplying on the right by $\adj(X)$, we get
$
(f+Xb)\det(X)=0,
$
hence $fI+Xb=0$, and $\adj(X)f=-\adj(X)Xb=-\det(X)b$.
Since $\det(X)$ is coprime to the entries of $\adj(X)$ (for symmetric matrices $X$), we conclude that $\det(X)\mid f$ and we obtain a polynomial $g:=\det(X)^{-1}f$. 
Then $Xb=-\det(X)g$, multiplying on the left by $\adj(X)$, we obtain
$
\det(X)b=-\det(X)\adj(X)g,
$
hence $\det(X)(b+\adj(X)g)=0$, and the assertion follows. For part (4), note that $\bfP_d$ has a universal filtration by Theorem \ref{thm:schur-decomp}, hence the formal characters are independent of $p$.
\end{proof}

\begin{proposition}\label{prop:char0-calculation}
Suppose $p=0$. Then for any $d\geq 0$, the subspace $\im(\Psi_d)\subset \Sym^2 \bfR\otimes \bfP_d$ is the sum of the isotypic components $S_\lambda(\bfR^\vee)$ with $\lambda_s\geq 0$.
\end{proposition}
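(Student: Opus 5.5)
The plan is to compare formal characters and then show equality by an isotypic argument in characteristic zero. In characteristic $0$ every $\GL(\bfR)$-module is semisimple, so $\Sym^2\bfR\otimes\bfP_d$ decomposes as
$$
\Sym^2\bfR\otimes \bfP_d=\bfA_d\oplus \bfB_d ,
$$
where $\bfA_d$ (resp. $\bfB_d$) is the sum of the isotypic components $S_\lambda(\bfR^\vee)$ with $\lambda_s\geq 0$ (resp. $\lambda_s<0$). The argument has two halves.

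\textbf{Containment $\im(\Psi_d)\subseteq \bfA_d$.} The source of $\Psi_d$ is $\bfP_{d-1}\oplus(\Sym^2\bfR^\vee\otimes\bfP_{d-2})$. By Theorem \ref{thm:schur-decomp}(2) and the Pieri rule, this is a polynomial representation of $\bfR^\vee$. Hence all its constituents $S_\lambda(\bfR^\vee)$ have $\lambda_s\geq 0$. Since $\Psi_d$ is equivariant, its image lies in $\bfA_d$.

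\textbf{Reverse containment, via dimensions.} Here I show $\dim \bfA_d=\dim\im\Psi_d$, with $\dim\im\Psi_d$ given by Lemma \ref{lem:good-fil-surj}(4). I would compute $\ch \bfA_d$ through a complement. Consider the short exact sequence
$$
0\to \bfR\otimes\bfR^\vee\otimes\bfP_d \text{-type pieces}
$$
More concretely, realize $\Sym^2\bfR\otimes\bfP_d$ as $\Hom(\Sym^2\bfR^\vee,\bfP_d)$. There is a natural equivariant map
$$
\bfP_d\otimes \Sym^2\bfR\to \bfP_{d-1},
$$
given by contraction/derivation, i.e. the action of $\Sym^2\bfR$ as differential operators $\partial/\partial x_{ab}$ on $\bfP=k[\Sym^2\bfR]$. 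This map is surjective in characteristic $0$. Its kernel should be identified with $\bfA_d$ after a suitable twist.

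A cleaner approach is to use Cauchy/plethysm directly:
\begin{itemize}
\item $\bfP_d=\bigoplus_{\mu\text{ even},|\mu|=2d}S_\mu(\bfR^\vee)$;
\item $\Sym^2\bfR\otimes S_\mu(\bfR^\vee)$ is obtained by the dual Pieri rule, i.e. by removing a horizontal $2$-strip from $\mu$ with weights in $\mbZ^s$. Constituents with $\lambda_s<0$ occur exactly when the strip removes boxes below row $s$, which is possible only if $\mu_s\leq 1$, so $\mu_s=0$.
\end{itemize}
One then counts the $\lambda_s\ge0$ part. This is precisely the structure of $\bfP_{d-1}+\Sym^2\bfR^\vee\cdot\bfP_{d-2}$ modulo the relation coming from $(\det\bfR^\vee)^{2}\otimes\bfP_{d-1-s}$.

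Equivalently, I would prove the identity
$$
\ch\bfA_d=\ch\bfP_{d-1}+\ch\Sym^2\bfR^\vee\,\ch\bfP_{d-2}-\ch(\det\bfR^\vee)^{2}\ch\bfP_{d-1-s}
$$
as a combinatorial identity of Littlewood--Richardson coefficients. Since $\im\Psi_d\subseteq\bfA_d$ and the dimensions then agree, equality follows.

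\textbf{Alternative if the count is messy.} Instead of counting, I would prove the reverse containment geometrically. Use the degree-$0$ contraction $\bfA_d\to\bfP_{d-1}$, namely pairing $\Sym^2\bfR$ with one factor of $\bfP_d$. Also use the interpretation of $\Sym^2\bfR\otimes\bfP$ as vector fields on $\Sym^2\bfR$. Identify $\bfA=\bigoplus_d\bfA_d$ as the $\bfP$-submodule generated in low degree: the components with $\lambda_s\geq 0$ form a $\bfP$-submodule. I would then check the generation statement on the open set $\det\neq0$. There $\Sym^2\bfR\otimes\bfP[\det^{-1}]$ is spanned over $\bfP[\det^{-1}]$ by $\tau$ and the $\Phi(u)$, because $X\mapsto XUX$ for invertible $X$ hits everything. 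Finally, use the highest weight vectors of $\bfA_d$ to clear denominators.

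\textbf{Main obstacle.} The main obstacle is the step of showing that each highest weight vector in a component with $\lambda_s\ge0$ lies in the image. I would first attempt the character identity using the Jacobi--Trudi/plethysm generating function
$$
\sum_d\ch\bfP_d\,t^d=\prod_{a\le b}(1-x_a^{-1}x_b^{-1}t)^{-1},
$$
and verify that the $\lambda_s<0$ part of $\ch\Sym^2\bfR\cdot\ch\bfP_d$ equals the discrepancy.
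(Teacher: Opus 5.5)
\textbf{There is a genuine gap: the reverse inclusion is never proved.}

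Your first half is fine. The source of $\Psi_d$ is a polynomial representation of $\bfR^\vee$, so $\im(\Psi_d)\subseteq\bfA_d$. The missing step is $\bfA_d\subseteq\im(\Psi_d)$, which is the entire content of the proposition. You correctly note that, by Lemma \ref{lem:good-fil-surj}(3)--(4), it is equivalent to the identity
$$\ch\bfA_d=\ch\bfP_{d-1}+\ch\Sym^2\bfR^\vee\cdot\ch\bfP_{d-2}-\ch(\det\bfR^\vee)^{\otimes 2}\cdot\ch\bfP_{d-1-s}.$$
But you never prove this identity. ``One then counts'' and ``this is precisely the structure of \dots'' are assertions, and you list the identity yourself as the main obstacle.

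Your side routes do not supply it either:
\begin{enumerate}
\item The kernel of the contraction $\Sym^2\bfR\otimes\bfP_d\to\bfP_{d-1}$ contains all of $\bfB_d$, because the target is polynomial. So it is not $\bfA_d$ under any twist.
\item The localization argument carries no information by itself. Over $\bfP[\det^{-1}]$, every $F\in\Sym^2\bfR\otimes\bfP_d$, including those in $\bfB_d$, satisfies $F(X)=X\bigl(X^{-1}F(X)X^{-1}\bigr)X$. So every such $F$ is a $\bfP[\det^{-1}]$-combination of the $\Phi(u)$, $u\in\Sym^2\bfR^\vee$. The condition $\lambda_s\geq0$ therefore enters only in the ``clear denominators'' step, and you give no mechanism for it.
\end{enumerate}

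\textbf{How to complete your route.} The character route can be finished, and then it is a genuinely different proof.

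In characteristic $0$, $\bfP_d\cong\bigoplus S_\mu(\bfR^\vee)$, the sum over even partitions $\mu$ of $2d$ with $\ell(\mu)\le s$. By Pieri and dual Pieri, the identity reduces to the following: for every partition $\lambda$ of $2d-2$ with $\ell(\lambda)\le s$,
$$a(\lambda)=b(\lambda)+[\lambda\text{ even and }\lambda_s=0].$$
Here $a(\lambda)$ is the number of even $\mu$ with $\ell(\mu)\le s$ and $\mu/\lambda$ a horizontal $2$-strip. Likewise $b(\lambda)$ is the number of even $\kappa$ with $\lambda/\kappa$ a horizontal $2$-strip. The last term is $[\lambda\text{ even}]$, coming from $\bfP_{d-1}$, minus $[\lambda\text{ even},\ \lambda_s\ge2]$, coming from $(\det\bfR^\vee)^{\otimes2}\otimes\bfP_{d-1-s}$.

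This is checked case by case:
\begin{enumerate}
\item If $\lambda$ is even, $a(\lambda)$ counts the distinct values among $\lambda_1,\dots,\lambda_s$, and $b(\lambda)$ counts the distinct nonzero ones. They differ by $1$ exactly when $\lambda_s=0$.
\item If $\lambda$ has exactly two odd parts $\lambda_i,\lambda_j$ with $i<j$, then $a(\lambda)=b(\lambda)$. Both equal $0$ if $j=i+1$ and $\lambda_i=\lambda_j$, and $1$ otherwise.
\item Otherwise both sides are $0$.
\end{enumerate}

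\textbf{Comparison with the paper.} The paper uses Schur--Weyl duality to write $\wt{\bfM}_d$ directly as $I_{1,2}+I_{1,3}=\tau\cdot\bfP_{d-1}+\Phi(\Sym^2\bfR^\vee\otimes\bfP_{d-2})$, without needing $\ker\Psi_d$. The completed character route instead relies on the kernel computation of Lemma \ref{lem:good-fil-surj}(3) together with characteristic-zero plethysm and Pieri combinatorics. In exchange, it avoids the explicit idempotent and permutation bookkeeping.
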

\begin{proof}
We may assume $k=\mbC$. 
Let $\wt{\bfM}_d\subset \Sym^2 \bfR \otimes \bfP_d$ be the sum of the isotypic components $S_{\lambda}(\bfR^\vee)$ with $\lambda_s\geq 0$. 
It is straightforward to verify that $\im(\Psi_d)\subseteq \wt{\bfM}_d$, we check the opposite direction.

Let $S_{2d}$ be the symmetric group and let $H_d\subset S_{2d}$ be the centralizer of $(12)(34)\cdots(2d-1,2d)$. Consider the idempotents
$$
e_1:=\frac{1}{2}(1+(12))\text{ on }(\bfR)^{\otimes 2},\quad
e_2^{(d)}:=\frac{1}{|H_d|}\sum_{h\in H_d} h\text{ on }(\bfR^\vee)^{\otimes 2d}.
$$
We have $\GL(\bfR)$-equivariant identifications:
$$
\Sym^2 \bfR=(\bfR^{\otimes 2})^{S_2}=e_1(\bfR^{\otimes 2}),\quad \bfP_d=\Sym^d(\Sym^2 \bfR^\vee)=((\bfR^\vee)^{\otimes 2d})^{H_d}=e_2^{(d)}((\bfR^\vee)^{\otimes 2d}),
$$
with the monomial $(u_1u_2)\cdots(u_{2d-1}u_{2d})$ corresponding to $e_2^{(d)}(u_1\otimes\cdots\otimes u_{2d})$. 
Consider the idempotent $e^{(d)}:=e_1\otimes e_2^{(d)}$ on $W_d:=\bfR^{\otimes 2}\otimes (\bfR^\vee)^{\otimes 2d}$, it is an equivariant projection of $W_d$ onto $\Sym^2 \bfR\otimes \bfP_d$. Since $H_k\times H_l\subseteq H_{k+l}$, it follows that
\begin{equation}\label{equ:induction-formula}
e^{(k+l)}(y\otimes z)=e^{(k)}(y)\cdot e_2^{(l)}(z)\quad \text{for }y\in W_k,z\in (\bfR^\vee)^{\otimes 2l}.
\end{equation}

By the tensor realization, the irreducible representations $S_{\lambda}(\bfR^\vee)$ with $|\lambda|=2d-2$ and $\lambda_s\geq 0$ are exactly the irreducible constituents of $(\bfR^\vee)^{\otimes (2d-2)}$. By complete reducibility, $\wt{\bfM}_d$ is therefore the sum of the images of all $\GL(\bfR)$-equivariant maps $f:(\bfR^\vee)^{\otimes (2d-2)}\to \Sym^2 \bfR\otimes \bfP_d$. Consider the adjunction isomorphism
$$
c:\End_{\GL(\bfR)}((\bfR^\vee)^{\otimes 2d})\overset{\sim}{\lr} \Hom_{\GL(\bfR)}((\bfR^\vee)^{\otimes (2d-2)}, W_d),
$$
which is given by 
$$
c(F)(u^\vee)=\sum_{a,b} x_a\otimes x_b\otimes F(u^\vee\otimes x_a^\vee\otimes x_b^\vee).
$$

By Schur--Weyl duality, the space $\End_{\GL(\bfR)}((\bfR^\vee)^{\otimes 2d})$ is spanned by the permutations $\sigma\in S_{2d}$. Hence any $f\in \Hom_{\GL(\bfR)}((\bfR^\vee)^{\otimes (2d-2)},W_d)$ is given by
$$
f=\sum_{\sigma\in S_{2d}}a_\sigma c(\sigma),\quad a_\sigma\in \mbC.
$$
Suppose furthermore that $f\in \Hom_{\GL(\bfR)}((\bfR^\vee)^{\otimes (2d-2)},\wt{\bfM}_d)$, then
$$
f=e^{(d)}\circ f=\sum_{\sigma} a_\sigma e^{(d)}\circ c(\sigma).
$$
Since each $e^{(d)}\circ c(\sigma)$ is itself an equivariant map in $\Hom_{\GL(\bfR)}((\bfR^\vee)^{\otimes (2d-2)},\Sym^2 \bfR\otimes \bfP_d)$, we conclude that
$$
\wt{\bfM}_d=\sum_{\sigma\in S_{2d}}\operatorname{im}(e^{(d)}\circ c(\sigma)).
$$

We now study the image of each map explicitly. For distinct $p,q\in\{1,\cdots,2d\}$ and vectors $u_1^\vee,\cdots,u_{2d-2}^\vee\in \bfR^\vee$, let
$$
w_{p,q}(a,b)=u_1^\vee\otimes\cdots\otimes u_{p-1}^\vee\otimes x_a^\vee\otimes \cdots \otimes x_b^\vee\otimes u_{2d-2}^\vee\in (\bfR^\vee)^{\otimes 2d}
$$ 
be the pure tensor with $x_a^\vee$ in position $p$ and $x_b^\vee$ in position $q$, and $u_1^\vee,\cdots,u_{2d-2}^\vee$ in the remaining positions in increasing order. Let $I_{p,q}$ be the span of the vectors 
$$
e^{(d)}\Bigl(
\sum_{a,b}x_a\otimes x_b\otimes w_{p,q}(a,b)
\Bigr),\quad u_1^\vee,\cdots,u_{2d-2}^\vee\in \bfR^\vee.
$$
We have
\begin{align*}
c(\sigma)(u_1^\vee\otimes \cdots \otimes u_{2d-2}^\vee)={}&\sum_{a,b}x_a\otimes x_b\otimes \sigma(u_1^\vee\otimes \cdots\otimes u_{2d-2}^\vee\otimes x_a^\vee\otimes x_b^\vee)\\
={}&\sum_{a,b}x_a\otimes x_b\otimes w_{\sigma(2d-1)\sigma(2d)}(a,b).
\end{align*}
Since the $u_i^\vee$ range over all of $\bfR^\vee$, we see that 
$$
\operatorname{im}(e^{(d)}\circ c(\sigma))=I_{\sigma(2d-1)\sigma(2d)}.
$$

Moreover, $I_{p,q}=I_{q,p}$ since $e_1(x_b\otimes x_a)=e_1(x_a\otimes x_b)$. For any $h\in H_d$, we have $I_{p,q}=I_{h(p),h(q)}$ since $e_2^{(d)}\circ h=e_2^{(d)}$. Therefore, we have
$$
\wt{\bfM}_d=\sum_{p,q}I_{p,q}=I_{1,2}+I_{1,3}.
$$
It remains to compute $I_{1,2}$ and $I_{1,3}$.

First, we have $I_{1,2}=\tau\cdot \bfP_{d-1}$. By \eqref{equ:induction-formula} with $(k,l)=(1,d-1)$, we have
$$
e^{(d)}\Bigl(
x_a\otimes x_b\otimes x_a^\vee\otimes x_b^\vee\otimes u_1^\vee\otimes \cdots \otimes u_{2d-2}^\vee
\Bigr)=\tau\cdot e_2^{(d-1)}(u_1^\vee\otimes \cdots\otimes u_{2d-2}^\vee),
$$
and we have $e_2^{(d-1)}((\bfR^\vee)^{\otimes (2d-2)})=\bfP_{d-1}$.

Next, we have $I_{1,3}=\Phi(\Sym^2 \bfR^\vee\otimes \bfP_{d-2})$. Let $z^\vee:=u_3^\vee\otimes \cdots \otimes u_{2d-2}^\vee$, by \eqref{equ:induction-formula} with $(k,l)=(2,d-2)$, we have
$$
e^{(d)}\Bigl(
\sum_{a,b}x_a\otimes x_b\otimes x_a^\vee\otimes u_1^\vee\otimes x_b^\vee\otimes u_2^\vee\otimes z^\vee
\Bigr)=\Phi(u_1^\vee u_2^\vee)\cdot e_2^{(d-2)}(z^\vee).
$$
As $u_1^\vee,u_2^\vee$ range over $\bfR^\vee$, the products $u_1^\vee u_2^\vee$ span $\Sym^2 \bfR^\vee$, and $e_2^{(d-2)}((\bfR^\vee)^{\otimes (2d-4)})=\bfP_{d-2}$, which proves the claim and hence the proposition.
\end{proof}

\begin{corollary}\label{cor:charp-quotient}
Suppose $p\neq 2$. For any $d\geq 0$, the quotient 
$$
\Sym^2 \bfR\otimes \bfP_d\twoheadrightarrow \coker(\Psi_d)
$$ 
admits a good filtration whose associated graded parts are $S_{\gamma}(\bfR^\vee)$ with $\gamma_s<0$.
\end{corollary}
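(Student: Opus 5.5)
The plan is to first show that $\coker(\Psi_d)$ has a good filtration at all, using the Ext-criterion for good filtrations. Then I identify its factors by a formal character computation, which reduces to the characteristic zero statement of Proposition \ref{prop:char0-calculation}.

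\emph{Step 1: the ambient module has a good filtration.} We have $\Sym^2\bfR\simeq S_{(0,\dots,0,-2)}(\bfR^\vee)=\nabla_{(0,\dots,0,-2)}$. By Theorem \ref{thm:schur-decomp}(2), $\bfP_d$ has a filtration whose factors are Schur functors $S_\lambda(\bfR^\vee)=\nabla_\lambda$, so it has a good filtration. Lemma \ref{lem:goodfil}(2) then shows that $\Sym^2\bfR\otimes\bfP_d$ has a good filtration. Its factors are universal, i.e.\ given by Theorem \ref{thm:schur-decomp}, and so its formal character is independent of $p$.

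\emph{Step 2: the image has a good filtration.} I use Lemma \ref{lem:good-fil-surj}(3). The sequence
$$0\lr \bfP_{d-s-1}\otimes(\det\bfR^\vee)^{\otimes 2}\lr \bfP_{d-1}\oplus(\Sym^2\bfR^\vee\otimes\bfP_{d-2})\lr \im\Psi_d\lr 0$$
is exact, and its first two terms have good filtrations by Step 1's argument. Lemma \ref{lem:goodfil}(1) therefore gives $\im\Psi_d$ a good filtration.

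\emph{Step 3: the cokernel has a good filtration.} Set $A=\im\Psi_d$, $B=\Sym^2\bfR\otimes\bfP_d$, $C=\coker\Psi_d$, so that $0\to A\to B\to C\to 0$ is exact. I use Donkin's criterion: a finite dimensional module $V$ has a good filtration if and only if $\operatorname{Ext}^1_G(\Delta_\lambda,V)=0$ for all dominant $\lambda$. Moreover, $\operatorname{Ext}^i_G(\Delta_\lambda,V)=0$ for all $i>0$ whenever $V$ has a good filtration (cf.\ \cite[\S4]{AJ84}). The long exact sequence contains
$$\operatorname{Ext}^1(\Delta_\lambda,B)\lr\operatorname{Ext}^1(\Delta_\lambda,C)\lr\operatorname{Ext}^2(\Delta_\lambda,A).$$
Both outer terms vanish, hence so does the middle one, and $C$ has a good filtration. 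This step is where some care is needed. Lemma \ref{lem:goodfil}(1) as stated concerns resolutions ending in $\bfV$, not quotients, so I invoke the Ext-characterization from \cite{AJ84} directly. I expect this to be the main subtle point, though it is standard.

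\emph{Step 4: identify the factors.} The good filtration factors of $C$ are determined by $\ch(C)=\ch(B)-\ch(A)$. Both terms on the right are independent of $p$: $\ch(B)$ by Step 1 and $\ch(A)$ by Lemma \ref{lem:good-fil-surj}(4). Hence $\ch(C)$ agrees with the character of the characteristic zero cokernel. In characteristic $0$, Proposition \ref{prop:char0-calculation} says $\im\Psi_d$ is exactly the sum of the isotypic components $S_\lambda(\bfR^\vee)$ of $B$ with $\lambda_s\ge0$. By complete reducibility, the cokernel is then the sum of the components with $\lambda_s<0$. Characters of the $\nabla_\gamma$ are linearly independent, so the factors $S_\gamma(\bfR^\vee)$ of the good filtration of $C$ in characteristic $p$ are precisely these, and all satisfy $\gamma_s<0$.
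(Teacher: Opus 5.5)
Your proposal is correct and follows the paper's proof: the good filtration on $\im(\Psi_d)$ comes from Lemma \ref{lem:good-fil-surj}(3), which together with the good filtration on the ambient module gives one on the cokernel, and the factors are then identified via the $p$-independent character $\ch(\Sym^2\bfR)\ch(\bfP_d)-\ch(\im\Psi_d)$ and Proposition \ref{prop:char0-calculation}. The Ext detour in your Step 3 is unnecessary, since $0\to A\to B\to C\to 0$ is exactly the case $n=2$ of Lemma \ref{lem:goodfil}(1), the same way you already used that lemma in Step 2 (your Ext argument is just the proof of that lemma).
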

\begin{proof}
By Lemma \ref{lem:good-fil-surj}, we know that $\im(\Psi_d)$ has good filtration. Since $\Sym^2 \bfR\otimes \bfP_d$ has good filtration, we see that the cokernel $\coker(\Psi_d)$ has good filtration. The contribution of $S_\lambda(\bfR^\vee)$ can be calculated from the formal character
$$
\ch(\coker(\Psi_d))=\ch(\Sym^2\bfR)\cdot \ch(\bfP_d)-\ch(\im(\Psi_d)),
$$
which is independent of $p$. Hence the assertion follows from Proposition \ref{prop:char0-calculation}.
\end{proof}

\subsection{Surjectivity of the global sections}
Now we are ready to prove the main result. Consider the short exact sequence
$$
0\lr \bfP_{d-s-1}\otimes (\det \bfR^\vee)^{\otimes 2}\overset{(\det,-\adj)^t}{\lr} \bfP_{d-1}\oplus (\Sym^2 \bfR^\vee\otimes \bfP_{d-2})\overset{\Psi_d}{\lr}\im(\Psi_d)\lr 0.
$$
Applying the functor $\mcL_{G/P_s}(-)$ in Proposition \ref{prop:exact-fun}, we obtain the following exact sequence over $\Gr$ (recall that $\mcS=\Sym^2\mcR$):
$$
0\lr \Sym^{d-s-1}(\mcS^\vee)\otimes (\det \mcR^\vee)^{\otimes 2}\overset{(\det,-\adj)^t}{\lr} \Sym^{d-1}(\mcS^\vee)\oplus (\mcS^\vee\otimes \Sym^{d-2}(\mcS^\vee))\overset{\Psi_d}{\lr}\mcL_{G/P_s}(\im(\Psi_d))\lr 0,
$$
where by the exactness, $\mcL_{G/P_s}(\im(\Psi_d))$ is identical to
$$
\im\Psi_{d}:=\im\Bigl(
\Psi_d: \Sym^{d-1}(\mcS^\vee)\oplus (\mcS^\vee\otimes \Sym^{d-2}(\mcS^\vee))\lr \mcS \otimes \Sym^{d}(\mcS^\vee)
\Bigr).
$$

\begin{proposition}
Suppose $p\neq 2$ and $n\geq 2s+1$.
\begin{enumerate}
\item The inclusion $\im(\Psi_d)\subseteq \mcS\otimes \Sym^d(\mcS^\vee)$ induces an isomorphism
$$
\Gamma(\Gr,\im(\Psi_d))\overset{\sim}{\lr} \Gamma(\Gr,\mcS \otimes \Sym^d(\mcS^\vee))=\bfM_d.
$$
\item We have a surjection 
$$
\Psi_d: \Gamma(\Gr,\Sym^{d-1}(\mcS^\vee))\oplus \Gamma(\Gr,\mcS^\vee\otimes \Sym^{d-2}(\mcS^\vee))\lr \Gamma(\Gr,\im(\Psi_d)).
$$
\end{enumerate}
\end{proposition}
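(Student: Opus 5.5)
Both parts reduce to cohomology vanishing on $\Gr$, obtained by transporting the good filtrations of \S\ref{sec:multi-maps-rep} through the exact functor $\mcL_{G/P_s}(-)$ of Proposition \ref{prop:exact-fun} and then applying Theorem \ref{thm:GLn-BWB}.

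For part (1), we consider the short exact sequence of $P_s$-modules
$$
0\lr \im(\Psi_d)\lr \Sym^2\bfR\otimes \bfP_d\lr \coker(\Psi_d)\lr 0
$$
and apply $\mcL_{G/P_s}$ to get a short exact sequence of bundles
$$
0\lr \im\Psi_d\lr \mcS\otimes\Sym^d\mcS^\vee\lr \mcL_{G/P_s}(\coker\Psi_d)\lr 0.
$$
By Corollary \ref{cor:charp-quotient}, $\coker(\Psi_d)$ has a good filtration whose factors are $S_\gamma(\bfR^\vee)$ with $\gamma_s<0$. Exactness of $\mcL_{G/P_s}$ then gives a filtration of $\mcL_{G/P_s}(\coker\Psi_d)$ with graded pieces $S_\gamma(\mcR^\vee)$. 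Each such piece should satisfy $-(n-s)\leq\gamma_s\leq -1$, so it is acyclic by Theorem \ref{thm:GLn-BWB}(2).

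To get the lower bound, we note that $\coker(\Psi_d)$ is a quotient of $\Sym^2\bfR\otimes\bfP_d$. Since both modules have good filtrations, the multiplicities are additive in characters, so the factors of the cokernel are among the factors $S_\nu(\bfR^\vee)$ of $\Sym^2\bfR\otimes\bfP_d$. These satisfy $\nu_s\geq -2$ by Theorem \ref{thm:schur-decomp}(1), since $\Sym^2\bfR=S_{(0,\dots,0,-2)}(\bfR^\vee)$. Because $n-s\geq s+1\geq 2$, we get $\gamma_s\in\{-1,-2\}\subset[-(n-s),-1]$. An induction on the filtration then shows all cohomology of $\mcL_{G/P_s}(\coker\Psi_d)$ vanishes. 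The long exact sequence therefore gives $H^0(\im\Psi_d)\cong H^0(\mcS\otimes\Sym^d\mcS^\vee)=\bfM_d$. For $s=1$ or degenerate $d$ the same bound still works.

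For part (2), we use the exact sequence on $\Gr$ displayed before the statement. The surjectivity follows once $H^1(\Gr,\Sym^{d-s-1}(\mcS^\vee)\otimes(\det\mcR^\vee)^{\otimes 2})=0$. The corresponding $\GL(\bfR)$-module is $\bfP_{d-s-1}\otimes(\det\bfR^\vee)^{\otimes2}$. By Theorem \ref{thm:schur-decomp}(2) it has a universal filtration with factors $S_{\lambda+(2^s)}(\bfR^\vee)$, where $\lambda$ ranges over even partitions. All of these have last entry $\geq 2\geq 0$, so they carry nonnegative dominant weights. Transporting the filtration, the bundle is filtered by $S_\beta(\mcR^\vee)$ with $\beta$ dominant and $\mu=0$. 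Such pieces are acyclic in positive degree by Theorem \ref{thm:GLn-BWB}(1), because the concatenated sequence $\beta=(\lambda+2,0,\dots,0)$ is weakly decreasing. Hence $H^1=0$, and $\Psi_d$ is surjective on global sections. (When $d<s+1$ the kernel term is zero and there is nothing to prove.)

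The main obstacle is part (1). One must make sure that the character-theoretic identification in Corollary \ref{cor:charp-quotient} really bounds $\gamma_s$ from below, and not only from above, in every characteristic $p\neq2$. The argument relies on the cokernel's factors being a subset of those of $\Sym^2\bfR\otimes\bfP_d$. This in turn needs the additivity of good-filtration multiplicities, i.e. the vanishing of $\mathrm{Ext}^1$ between induced modules that \cite{AJ84} provides. I would check this first, and fall back on the universal filtration of Theorem \ref{thm:schur-decomp}(1) applied to $S_{(0,\dots,-2)}\otimes S_\mu$ if needed.
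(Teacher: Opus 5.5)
Your proposal is correct and follows the paper's proof. Part (1) goes through the good filtration of the cokernel (Corollary \ref{cor:charp-quotient}) together with Theorem \ref{thm:GLn-BWB}(2), and part (2) through the vanishing of $H^1$ of the kernel term by Theorem \ref{thm:GLn-BWB}(1). The only difference is that you get the lower bound $\gamma_s\geq -2$ directly from Theorem \ref{thm:schur-decomp}(1) applied to $\Sym^2\bfR\otimes\bfP_d$, whereas the paper quotes the weaker bound $\gamma_s\geq -(s+1)$ from Proposition \ref{prop:wedge-good}(2); either suffices since $n-s\geq s+1$.
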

\begin{proof}
For part (1), consider the short exact sequence of $\GL(\bfR)$-modules
$$
0\lr \im(\Psi_d)\lr \Sym^2\bfR\otimes \Sym^d(\Sym^2(\bfR^\vee))\lr \coker(\Psi_d)\lr 0.
$$
By Lemma \ref{lem:good-fil-surj}, the first two terms have good filtrations, hence so does $\coker(\Psi_d)$. Therefore, the vector bundle $\mcL_{G/P_s}(\coker(\Psi_d))$ admits a filtration with associated graded part of the form $S_{\lambda}(\mcR^\vee)$.
By Corollary \ref{cor:charp-quotient}, all associated graded parts of $\coker(\Psi_d)$ are of the form $S_{\gamma}(\bfR^\vee)$ with $\gamma_s<0$. By Proposition \ref{prop:wedge-good}, we also have $\gamma_s\geq -(s+1)\geq -(n-s)$. Then all cohomology groups of $\mcL_{G/P_s}(\coker(\Psi_d))$ vanish by Theorem \ref{thm:GLn-BWB}, this proves (1).

For part (2), applying $\mcL_{G/P_s}(-)$ to the short exact sequence
$$
0\lr \bfP_{d-s-1}\otimes (\det \bfR^\vee)^{\otimes 2}\overset{(\det,-\adj)^t}\lr \bfP_{d-1}\oplus (\Sym^2 \bfR^\vee\otimes \bfP_{d-2})\overset{\Psi_d}{\lr} \im(\Psi_d)\lr 0,
$$
we reduce to prove that $H^1(\Gr,\mcL_{G/P_s}(\bfP_{d-s-1}\otimes (\det \bfR^\vee)^{\otimes 2}))=0$. The term $\bfP_{d-s-1}\otimes (\det \bfR^\vee)^{\otimes 2}$ has a good filtration, with each factor $S_{\lambda}(\bfR^\vee)$ satisfying $\lambda_s\geq 2$, hence the vanishing holds by Theorem \ref{thm:GLn-BWB}.
\end{proof}

We are ready to prove the surjectivity.
\begin{proof}[Proof of Theorem \ref{thm:surjective-multi}]
Suppose $p\neq 2$. We claim that the natural map
$$
\bfN^\vee\otimes \Gamma(\Gr,\Sym^{d-2}(\mcS^\vee))\twoheadrightarrow
\Gamma(\Gr,\mcS^\vee\otimes \Sym^{d-2}(\mcS^\vee))
$$
is surjective, then the composition 
$$
\Gamma(\Gr,\Sym^{d-1}(\mcS^\vee))\oplus 
\Bigl(\bfN^\vee\otimes \Gamma(\Gr,\Sym^{d-2}(\mcS^\vee))\Bigr)
\lr \Gamma(\Gr,\im(\Psi_d))=\bfM_d
$$
is a surjection, and the theorem follows. 

Applying Schur functor to the short exact sequence 
$$
0\lr \mcQ^\vee \lr\mcF^\vee \lr \mcR^\vee\lr 0.
$$
By Theorem \ref{thm:Schur-complex}, we obtain a long exact sequence
$$
0\lr \bigwedge^2 \mcQ^\vee \lr \mcQ^\vee\otimes \mcF^\vee \lr \Sym^2\mcF^\vee\lr \mcS^\vee \lr 0.
$$
This breaks into two short exact sequences:
$$
0\lr \bigwedge^2 \mcQ^\vee \lr \mcQ^\vee\otimes \mcF^\vee \lr \mcK\lr 0
\quad\text{and}\quad
0\lr \mcK \lr \Sym^2\mcF^\vee\lr \mcS^\vee \lr 0.
$$
The claim follows from $H^1(\Gr,\mcK\otimes \Sym^d\mcS^\vee)=0$, which would follow from 
$$
H^1(\Gr,\mcQ^\vee\otimes \mcF^\vee\otimes \Sym^d\mcS^\vee)=0
\quad\text{and}\quad
H^2(\Gr,\bigwedge^2\mcQ^\vee\otimes \Sym^d\mcS^\vee)=0.$$ 
Since there is a filtration
$$
\Sym^d(\Sym^2\mcR^\vee)\approx \bigoplus_{\lambda} S_{\lambda}\mcR^\vee,
$$
we see that $\Sym^d(\Sym^2\mcR^\vee)\otimes \mcQ^\vee$ (resp. $\Sym^d(\Sym^2\mcR^\vee)\otimes \bigwedge^2\mcQ^\vee$) admits a filtration with graded parts of the form $S_{\lambda}\mcR^\vee\otimes \mcQ^\vee$ (resp. $S_{\lambda}\mcR^\vee\otimes \bigwedge^2\mcQ^\vee$). By Theorem \ref{thm:GLn-BWB}(1)(3), all higher cohomology groups vanish, this finishes the proof.
\end{proof}

\section{Applications to unitary Shimura varieties}
In this section, we apply our result to ramified unitary local models and Shimura varieties.

\subsection{Wedge local model and Pappas's conjecture}\label{sec:Pappasconj}
Let $F/F_0$ be a ramified quadratic extension of $p$-adic fields $(p>2)$, with uniformizers $\pi\in F$ and $\pi_0\in F_0$ such that $\pi^2=\pi_0$. Let $k_F$ be the residue field of $O_F$.

Let $(V,\phi)$ be an $F$-hermitian space of dimension $n$. Consider the induced alternating form $\langle-,-\rangle:V\times V\to F_0$ given by 
$$
\langle x,y\rangle:=\frac{1}{2}\tr_{F/F_0}(\pi^{-1}\phi(x,y)).
$$
Let $\Lambda\subset V$ be an $O_F$-lattice such that $\Lambda^\vee=\Lambda$. Here we can take either the hermitian dual or the alternating dual. We thus obtain a perfect alternating form
$$
\langle-,-\rangle: \Lambda\times\Lambda\lr O_{F_0}.
$$
\begin{definition}\label{def:wedgedLM}
Fix $(r,s)$ such that $r+s=n$ and $r\neq s$. The \emph{wedge local model} $\mbM^{\wedge}_{(r,s)}$ is a projective scheme over $\Spec O_F$, representing the moduli functor that assigns to each $O_F$-algebra $R$ the set of all filtrations $(\mcF\subset\Lambda\otimes_{O_{F_0}}R)$ such that:
\begin{enumerate}[label=LM \arabic*.]
\item The filtration $\mcF\subset \Lambda\otimes_{O_{F_0}}R$ is an $O_F\otimes_{O_{F_0}}R$-submodule and an $R$-direct summand of rank $n$;
\item The perfect $R$-bilinear pairing
$$
\langle-,-\rangle:\Lambda\otimes_{O_{F_0}}R\times \Lambda\otimes_{O_{F_0}}R\lr R
$$
makes $\mcF$ a totally isotropic subspace.
\item(Kottwitz condition) The element $\pi\otimes 1\in O_F\otimes_{O_{F_0}}R$ acts on $\mcF$ as an $R$-linear endomorphism with characteristic polynomial
$$
\det(T\cdot \id-\pi\otimes 1\mid \mcF)=(T-\pi)^s(T+\pi)^r\in R[T];
$$
\item(Wedge condition) We have
$$
\bigwedge^{s+1}_R\bigl(
\pi\otimes 1+1\otimes \pi\mid \mcF
\bigr)=0,\quad 
\bigwedge^{r+1}_R\bigl(
\pi\otimes 1-1\otimes \pi\mid \mcF
\bigr)=0.
$$
\end{enumerate}

We define the \emph{local model} $\mbM_{(r,s)}^{\loc}$ to be the scheme-theoretic closure of the generic fiber $\mbM^{\wedge}_{(r,s)}\otimes_{O_{F}}F\hookrightarrow \mbM_{(r,s)}^{\wedge}$.
\end{definition}

By conjugation, we have
$$
\mbM_{(r,s)}^{\wedge}\cong \mbM_{(s,r)}^{\wedge}\quad\text{and}\quad \mbM_{(r,s)}^{\loc}\cong \mbM_{(s,r)}^{\loc}.
$$
Hence without loss of generality, we may assume $r>s$, and we keep this assumption in the remaining part of the paper.

\begin{theorem}\label{thm:coherent}
\begin{enumerate}
\item The local model $\mbM_{(r,s)}^{\loc}$ and its special fiber $\mbM_{(r,s)}^{\loc}\otimes k_F$ are normal.
\item The wedge local model $\mbM^{\wedge}_{(r,s)}$ is topologically flat; in other words, the underlying topological spaces of $\mbM^{\loc}_{(r,s)}$ and $\mbM^{\wedge}_{(r,s)}$ coincide.
\item We have $\mbM_{(r,s)}^{\loc}=\mbM_{(r,s)}^{\wedge}$ if and only if the special fiber $\mbM^\wedge_{(r,s)}\otimes k_F$ is reduced.
\end{enumerate}
\end{theorem}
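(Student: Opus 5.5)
The plan is to deduce all three statements from the group-theoretic theory of local models: identify $\mbM^{\loc}_{(r,s)}$ with a Pappas--Zhu local model, and identify the underlying reduced special fiber of $\mbM^{\wedge}_{(r,s)}$ with an admissible locus in an affine flag variety.

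\textbf{Step 1: $\mbM^{\loc}_{(r,s)}$ is a Pappas--Zhu local model.} The generic fiber of $\mbM^{\wedge}_{(r,s)}$ is the Grassmannian $G/P_\mu$ for $G=\GU(V)$ and the cocharacter $\mu$ of signature $(r,s)$. The special fiber embeds in the mixed-characteristic analogue of the lattice-chain description, so its reduced locus lies in the affine Grassmannian (flag variety) of the parahoric stabilizing $\Lambda$. Following \cite{PR09}, I would check that the closure $\mbM^{\loc}_{(r,s)}$ agrees with the local model $\mbM_{\mathcal G,\mu}$ of \cite{PZ13}. The parahoric here is special maximal, and $p>2$, so $p\nmid|\pi_1(G_{\mathrm{der}})|$.

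\textbf{Step 2: part (1).} I would then quote the main theorem of \cite{PZ13}, together with \cite{Zhu_coherent} (the coherence conjecture): $\mbM_{\mathcal G,\mu}$ is normal, and its special fiber is reduced with normal, Cohen--Macaulay irreducible components. At a special maximal level the $\mu$-admissible set has a unique maximal element, so the special fiber is the single Schubert variety $\overline{\mathrm{Gr}}_{\mathcal G,\mu}$. That Schubert variety is normal, which gives normality of $\mbM^{\loc}_{(r,s)}\otimes k_F$.

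\textbf{Step 3: part (2).} This is the main obstacle. I must show that every $k$-point of $\mbM^{\wedge}_{(r,s)}\otimes k_F$ lies in the admissible locus. On the special fiber, $\pi$ acts nilpotently on $\mcF$ with $\pi^2=0$, and the wedge condition forces $\mathrm{rank}(\pi|_{\mcF})\le s$. I would translate such points into lattices $\mcF$ between $\pi\Lambda$ and $\pi^{-1}\Lambda$ in the affine Grassmannian. Isotropy plus the rank bound should put their relative position in the closure of the $\mu$-orbit, that is, in $\mathrm{Adm}(\mu)$ for a special vertex, following the argument of \cite[\S 5]{PR09}. One then has $\mbM^{\wedge}_{(r,s),\mathrm{red}}\otimes k=\overline{\mathrm{Gr}}_{\mathcal G,\mu}$, which is the same topological space as $\mbM^{\loc}_{(r,s)}\otimes k$. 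Since both schemes share the generic fiber, their underlying spaces coincide.

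\textbf{Step 4: part (3).} The implication ($\Rightarrow$) is immediate from Step 2. For ($\Leftarrow$), suppose the special fiber of $\mbM^\wedge$ is reduced. By Step 3 it has the same underlying space as the reduced scheme $\mbM^{\loc}\otimes k_F$, and it contains it, so $\mbM^{\loc}\otimes k_F=\mbM^{\wedge}\otimes k_F$. Let $\mathcal I$ be the ideal of $\mbM^{\loc}$ in $\mbM^{\wedge}$. It is killed by a power of $\pi$, since the generic fibers agree. Because $\mbM^{\loc}$ is flat, $\mathcal I/\pi\mathcal I$ is the ideal of $\mbM^{\loc}\otimes k_F$ in $\mbM^{\wedge}\otimes k_F$, which is zero. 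Nakayama's lemma on the projective scheme then gives $\mathcal I=0$.
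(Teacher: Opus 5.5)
\textbf{Parts (1) and (3).} Your proof of (3) is correct and is exactly the G\"ortz--Wedhorn criterion \cite[Proposition 14.17]{GW_AGI} that the paper invokes. Flatness of $\mbM^{\loc}_{(r,s)}$ identifies $\mathcal I/\pi\mathcal I$ with the ideal of $\mbM^{\loc}_{(r,s)}\otimes k_F$ in the reduced scheme $\mbM^{\wedge}_{(r,s)}\otimes k_F$. Hence $\mathcal I=\pi\mathcal I$, and a coherent $\pi$-power-torsion ideal with this property vanishes. Quoting \cite{PZ13} and \cite{Zhu_coherent} for (1) is the same input as the paper's reference to \cite[Theorem 11.3]{PR2008}.

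However, your claim that the level is special maximal is false for $n$ even. There $\Stab(\Lambda)$ is only a quasi-parahoric, as the paper notes in \S\ref{sec:moduliforSV}. It contains the parahoric $\mcG^\circ$ with index $2$, and $\mcG^\circ$ belongs to a non-special facet. Its finite Weyl group is of type $D_{n/2}$, not the full relative Weyl group. So you must pass from $\mcG$ to $\mcG^\circ$ before applying \cite{PZ13}. More importantly, your reason for the special fiber being a single Schubert variety fails, and that is precisely what upgrades \emph{reduced with normal components} to \emph{normal}. The conclusion does hold for $r>s$, but it needs its own check. For example, the type-$D_{n/2}$ Weyl group still acts transitively on the relative Weyl orbit of $\mu$ because $s<n/2$. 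Alternatively, read it off the stratification by $\mathrm{rank}(\pi|_{\mcF})$.

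\textbf{Part (2).} The more serious gap is Step 3, which does not prove (2). The assertion that \emph{isotropy plus the rank bound should put the relative position in $\mathrm{Adm}(\mu)$} is topological flatness itself, not an argument for it. The paper does not reprove this. It cites Smithling: \cite{Smithling_odd} for $n$ odd and \cite[Remark 7.4.9]{Smithling_even} for $n$ even. Your description of the target as a $\mu$-orbit closure at a special vertex is again wrong for even $n$. Moreover, for even $n$ the wedge condition alone does not in general yield topological flatness; this is why Pappas--Rapoport introduced the spin condition. So any direct proof has to use features of the self-dual level.

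You should either quote these results or give a level-specific argument. The remark after the theorem indicates one route:
\begin{enumerate}
\item Stratify the reduced special fiber by $i=\mathrm{rank}(\pi|_{\mcF})\le s$.
\item Identify these strata with orbits whose closures contain the worst point.
\item Count dimensions. Near the worst point the reduced special fiber is $N_u$, the image of the Kempf collapsing of $\Sym^2\mcR$ over $\IGr(s,n)$. It is irreducible of dimension $s(n-s)=rs$.
\item Meanwhile $\mbM^{\loc}_{(r,s)}\otimes k_F$ is closed, stable, nonempty, and equidimensional of dimension $rs$ by flatness, so it must contain $N_u$.
\end{enumerate}
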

\begin{proof}
Part (1) is a direct consequence of the coherence conjecture, see \cite[Theorem 11.3]{PR2008}. The conjecture was proposed in loc.cit. and proved in \cite{Zhu_coherent}. See also \cite{GL_tubular} for a simplified modern proof.
For part (2), when $n$ is odd, this follows from \cite{Smithling_odd}. When $n$ is even, this follows from \cite[Remark 7.4.9]{Smithling_even}.
For part (3), this follows from \cite[Proposition 14.17]{GW_AGI}, combined with parts (1) and (2).
\end{proof}

\begin{remark}
Theorem \ref{thm:coherent}(2) also follows from a direct construction of the Schubert cells. For any $0\leq i\leq \mathrm{min}(r,s)$, we define $C_i\subset \mbM_{(r,s)}^\wedge\otimes k_F$ as the locally closed subvariety whose geometric points are
$$
C_i(k)=\Bigl\{\mcF\subset\Lambda\otimes_{O_{F_0}}k\mid \mathrm{rank}(\pi\otimes 1)\mcF=i
\Bigr\}.
$$
This stratification is stable under the action of the loop group, and agrees with the Schubert cell decomposition of the special fiber of the local model $\mbM^{\loc}_{(r,s)}\otimes k_F$.
\end{remark}

By Theorem \ref{thm:coherent}, the (geometric) special fibers of the wedge local model and the local model have the same underlying topological space, in particular, both contain the unique closed Schubert cell, known as the ``worst point''. 
Therefore, to study the geometry of $\mbM^\wedge_{(r,s)}$, it suffices to restrict to an open affine neighborhood of the worst point.

The worst point is given by 
$$
\Bigl((\pi\otimes 1)\Lambda\otimes_{O_{F_0}}k_F\subseteq \Lambda\otimes_{O_{F_0}}k_F\Bigr)\in \mbM^\wedge_{(r,s)}(k_F).
$$
Consider the natural embedding of the wedge local model into the Grassmannian
$$
\mbM^{\wedge}_{(r,s)}\hookrightarrow \Gr(n,\Lambda\otimes_{O_{F_0}}O_F).
$$
After fixing an orthonormal $O_F$-basis for $\Lambda$ so that $\phi$ is represented by identity matrix (after passing to some unramified extension), we restrict to an affine chart $U\subset \Gr(n,\Lambda\otimes_{O_{F_0}}O_F)$. A point in the chart $U$ can be represented by a $2n\times n$ matrix 
$
\begin{pmatrix}
X\\ I_n
\end{pmatrix}.
$
Assume $r>s$. In this case, the special fiber of the affine chart of the wedge local model admits the following identification:
\begin{equation}\label{equ:embedding-of-chart}
(\mbM^\wedge_{(r,s)}\cap U)\otimes k_F\overset{\sim}{\lr}  \Spec \frac{k_F[X]}{X-X^t,\tr(X),X^2,\bigwedge^{s+1}(X)}.
\end{equation}
To be more precise, let $\Pi:=\pi\otimes 1$. Over this chart, $\Pi$ acts on $\mcF$ via the matrix $X$. The isotropy condition LM 2 implies $X=X^t$. Since $\Pi^2=\pi_0\otimes 1=1\otimes \pi_0$ vanishes over the special fiber, we have $X^2=0$. The Kottwitz condition implies $\tr(X)=0$ (here we use Newton's identity $(p>s)$ to eliminate the other equations arising from the Kottwitz condition.), and the wedge condition implies $\bigwedge^{s+1}(X)=0$. We refer the reader to \cite[\S 3]{Luo_LM} for a similar calculation.

\begin{proposition}\label{prop:normal-orbit-closure}
For $p>2$, the nilpotent orbit closure $N_{u}$ is normal.
\end{proposition}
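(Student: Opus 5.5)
The plan is to identify $N_u$, as a reduced variety, with an open affine neighborhood of the worst point in the special fiber of the local model $\mbM^{\loc}_{(r,s)}$ for a suitable choice of $r>s$ with $r+s=n$. We then invoke Theorem \ref{thm:coherent}(1): the special fiber $\mbM^{\loc}_{(r,s)}\otimes k_F$ is normal. This is the ``relating to Schubert varieties in the affine Grassmannian'' step announced after Proposition \ref{prop:birational}. Here we work with $n\geq 2s+1$, so $r=n-s>s$ automatically. We also base change $k_F$ to the algebraically closed field $k$; since normality is insensitive to algebraically closed base extension for geometrically normal varieties, we should use that the coherence-conjecture results give normality of the geometric special fiber. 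Finally, we choose the orthonormal basis so that the quadratic form on $\bfF=\Lambda\otimes k$ is the one induced by $\phi$.

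First, by \eqref{equ:embedding-of-chart}, the scheme $(\mbM^\wedge_{(r,s)}\cap U)\otimes k$ is the closed subscheme $Z\subset N$ cut out by $\tr(X)$, the entries of $X^2$, and $I_{s+1}(X)$. Its $k$-points are symmetric $X$ with $X^2=0$ and $\mathrm{rank}\,X\leq s$; the trace condition is automatic for nilpotent $X$. Thus $Z_{\mathrm{red}}=N_u$.

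Second, by Theorem \ref{thm:coherent}(2), $\mbM^{\loc}\otimes k$ and $\mbM^\wedge\otimes k$ have the same underlying space. Moreover, $\mbM^{\loc}\otimes k$ is reduced, since it is normal by Theorem \ref{thm:coherent}(1). Hence
$$(\mbM^{\loc}_{(r,s)}\cap U)\otimes k=Z_{\mathrm{red}}=N_u,$$
and this is an open subscheme of a normal scheme containing the worst point $X=0$. Therefore $N_u$ is normal.

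The main obstacle is bookkeeping rather than any deep step. One has to check that the chart identification \eqref{equ:embedding-of-chart} is valid for the given $p$. The paper invokes Newton's identities assuming $p>s$, but for the reduced structure we only need the set-theoretic statement, which holds for all $p>2$. We also need that the worst point lies in $U$ and corresponds to $X=0$, so that $U$ meets $\mbM^{\loc}\otimes k$ in exactly the translated copy of $N_u$. In addition, we must ensure the hermitian form can be diagonalized over $O_F$ (possibly after an unramified extension), so that its reduction gives a nondegenerate quadratic space. This matches $(\bfF,q)$ as in \S3 up to isomorphism, which suffices since all nondegenerate quadratic spaces of dimension $n$ over $k$ are isomorphic.

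If one prefers to avoid the chart, an alternative is available: $N_u$ is a cone, being stable under scaling, and its normality can be checked near the vertex $0$. This is precisely the region the chart covers, since normality of a conic variety at the vertex implies normality everywhere via the $\mathbb{G}_m$-action.
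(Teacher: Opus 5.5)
Your proposal is correct and follows essentially the same route as the paper. You identify $N_u$ with the reduced locus of the chart \eqref{equ:embedding-of-chart}, use Theorem \ref{thm:coherent}(2) to see that this reduced locus is $(\mbM^{\loc}_{(r,s)}\cap U)\otimes k$, and conclude by the normality in Theorem \ref{thm:coherent}(1); your remark that only the set-theoretic description of the chart is needed, so that the Newton-identity hypothesis $p>s$ plays no role and $p>2$ suffices, is a useful precision the paper's write-up glosses over.
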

\begin{proof}
Now the reduced locus of the right hand side of \eqref{equ:embedding-of-chart} is the nilpotent orbit closure $N_{u}$, while the reduced locus of the left hand side is $(\mbM^{\loc}_{(r,s)}\cap U)\otimes k_F$ by Theorem \ref{thm:coherent}(2), which is normal by Theorem \ref{thm:coherent}(1).
\end{proof}

\begin{theorem}\label{thm:Pappas-rs}
Let $r>s$ and suppose $p\geq 2s-1$. We have $\mbM_{(r,s)}^{\loc}=\mbM_{(r,s)}^{\wedge}$.
\end{theorem}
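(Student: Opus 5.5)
By Theorem \ref{thm:coherent}(3), the equality $\mbM_{(r,s)}^{\loc}=\mbM_{(r,s)}^{\wedge}$ follows once we show that the special fiber $\mbM^\wedge_{(r,s)}\otimes k_F$ is reduced. Reducedness may be checked after base change to an algebraically closed field $k\supset k_F$, and it is an open condition. The plan is therefore to reduce to an affine neighborhood of the worst point and identify that neighborhood with $N_u$.

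\textbf{Step 1: reduction to the worst point.} The non-reduced locus of $\mbM^\wedge_{(r,s)}\otimes k$ is closed. It is also stable under the action of the (special fiber of the) loop group, or more simply under the group of automorphisms of $(\Lambda,\langle-,-\rangle)$ commuting with $O_F$. By Theorem \ref{thm:coherent}(2) together with the remark on Schubert cells, every nonempty closed stable subset of the special fiber contains the unique closed cell, the worst point. It therefore suffices to prove reducedness on the chart $U\otimes k$ containing the worst point.

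\textbf{Step 2: identify the chart with $N_u$.} By \eqref{equ:embedding-of-chart}, the chart is $\Spec k[X]/(X-X^t,\tr(X),X^2,\bigwedge^{s+1}X)$, where $X$ is an $n\times n$ symmetric matrix with respect to the orthonormal form $q$; this uses Newton's identities, valid since $p\geq 2s-1$ and $p>2$ give $p>s$ in the relevant range. Here $\bigwedge^{s+1}X$ generates exactly the ideal $I_{s+1}(X)$ of $(s+1)\times(s+1)$ minors. So the chart is $\Spec k[N]/(\tr(X),(X^2)_{ij},I_{s+1}(X))$.

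The hypothesis $n\geq 2s+1$ of Theorem \ref{thm:main-equation} holds because $r>s$ gives $n=r+s\geq 2s+1$. Theorem \ref{thm:main-equation} then says this ideal equals $I(N_u)$, the ideal of the reduced variety. Hence the chart is reduced, and in fact equals the normal variety $N_u$ of Proposition \ref{prop:normal-orbit-closure}. Combining with Step 1 gives reducedness of the whole special fiber, and Theorem \ref{thm:coherent}(3) concludes the proof.

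\textbf{Expected obstacle.} The main delicate point is Step 1, namely justifying the translation argument. One needs that the closure of every Schubert cell $C_i$ contains the worst point $C_0$, and that the group action preserves $\mbM^\wedge$ together with its non-reduced locus. I would argue this via the affine-Schubert-cell description: the cells $C_i$ are orbits, and the closure relations are linear (the rank of $\Pi$ on $\mcF$ can only drop under specialization). A secondary check is the precise form of the Newton-identity step in \eqref{equ:embedding-of-chart} under the bound $p\geq 2s-1$; I would only need the coefficients of $(T-\pi)^s(T+\pi)^r$ up to degree $s$, so only the bound $p>s$ is used.
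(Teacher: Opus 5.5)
Your proposal is correct and is essentially the paper's proof. Like the paper, you reduce to the affine chart at the worst point, identify its special fiber with $\Spec k[N]/(\tr(X),(X^2)_{ij},I_{s+1}(X))$ via \eqref{equ:embedding-of-chart}, invoke Theorem \ref{thm:main-equation} (using $n=r+s\geq 2s+1$) to get reducedness, and conclude with Theorem \ref{thm:coherent}(3). The only differences are that you spell out the worst-point reduction, which the paper merely asserts, and that you treat $s\leq 1$ uniformly instead of citing the trivial case and \cite{Pappas2000}.

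One small correction to your ``expected obstacle'' paragraph: semicontinuity of the rank of $\Pi$ only gives $\overline{C_i}\subseteq\bigcup_{j\leq i}C_j$. What you actually need is $C_0\subseteq\overline{C_i}$. This follows from $C_0$ being the unique closed orbit, or directly from the chart being a cone: degenerate a point $X\in C_i\cap U$ along $tX$ with $t\to 0$.
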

\begin{proof}
Assume $r>s$ as before.
When $(r,s)=(n,0)$, the equality clearly holds. When $(r,s)=(n-1,1)$, it was proved in \cite[4.5]{Pappas2000}. By Theorem \ref{thm:main-equation}, the affine chart \eqref{equ:embedding-of-chart} is reduced under our assumptions (note that $n=r+s\geq 2s+1$ since $r>s$). 
Hence the closed immersion is an isomorphism. The assertion then follows from Theorem \ref{thm:coherent}(3).
\end{proof}

\subsection{Moduli description for unitary Shimura varieties}\label{sec:moduliforSV}
We next provide a moduli description for the canonical model of unitary similitude Shimura varieties. For simplicity, we focus on the case where the hermitian form is defined over an imaginary quadratic field. Except possibly for the Hasse principle, all statements remain valid in the more general setting of a CM extension $\bfF/\bfF_0$, though the formulations become more involved. For a general treatment, see \cite{LMZ}.

Let $\bfF/\mbQ$ be an imaginary quadratic extension. Consider an $\bfF$-vector space $\bfV$ of dimension $n$, equipped with a skew-hermitian form $\varphi:\bfV\times\bfV\to\mbQ$. We define the unitary similitude group $\bfG=\GU(\bfV,\varphi)$. Fixing an embedding $\bfF\hookrightarrow \mbC$, we assume that the base change $\varphi_{\mbC}$ has signature $(r,s)$ ($r\neq s$). This determines a conjugacy class of homomorphisms $h:\Res_{\mbC/\mbR}\mbG_m\to \bfG_\mbR$, and the pair $(\bfG,h)$ defines a Shimura datum of PEL type.

Now fix a prime $p\neq 2$ that is ramified in $\bfF/\mbQ$ so that $\bfF\otimes \mbQ_p/\mbQ_p$ is a ramified quadratic extension.
Let $G:=\bfG_{\mbQ_p}$ be the unitary similitude group associated with the induced skew-hermitian form $\phi=\varphi_{\mbQ_p}$. 
Let $\Lambda\subset V:=\bfV_F$ be a self-dual lattice.
Let $K_p=\Stab_G(\Lambda)\subset G(\mbQ_p)$. This is a stabilizer quasi-parahoric subgroup. Fix a compact open subgroup $K^p\subset \bfG(\mbA_f^p)$ and define $K=K_pK^p$ as a compact open subgroup of $\bfG(\mbA_f)$.

We define a Deligne--Mumford stack $\mcA_K$ over $\Spec O_F$ by modifying the construction in \cite{RZ96}. 
To an $O_F$-algebra $R$, we associate the groupoid of tuples $(A,\iota,\ov{\lambda},\ov{\eta})$ satisfying the following conditions:
\begin{enumerate}
\item $A$ is an abelian scheme over $\Spec R$ equipped with an $O_F$-action:
$$
\iota:O_F\to \End A\otimes \mbZ_{(p)};
$$
\item $\ov{\lambda}$ is a $\mbZ_{(p)}$-homogeneous principal polarization of $A$, such that the induced Rosati involution is compatible with the conjugation of $O_F$;
\item $\ov{\eta}$ is a $K^p$-level structure
$$
\ov{\eta}:H_1(A,\mbA_f^p)\simeq \bfV\otimes \mbA_f^p\text{ mod }K^p,
$$
which respects the bilinear forms on both sides up to a constant in $(\mbA_f^p)^\times$;
\item The induced $O_F$-action on the Lie algebra $\Lie(A)$ satisfies the following conditions:
\begin{itemize}
\item(Kottwitz condition)
$$
\det(T\cdot \id-\iota(\pi)|\Lie(A))=(T-\pi)^r(T+\pi)^s;
$$
\item(Wedge condition)
$$
\bigwedge^{r+1}_R\bigl(
\iota(\pi)+\pi\mid \Lie(A)
\bigr)=0,\quad 
\bigwedge^{s+1}_R\bigl(
\iota(\pi)-\pi\mid \Lie(A)
\bigr)=0.
$$
\end{itemize}
\end{enumerate}

\begin{theorem}
Suppose $p>2$ and $p\geq 2s-1$.
The moduli functor $\mcA_K$ is represented by a flat and normal Deligne--Mumford stack over $\Spec(O_F)$. It defines an integral model of the Shimura variety $\Sh_K(\bfG,h)_{F}$.
\end{theorem}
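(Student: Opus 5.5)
The plan is to argue via the local model diagram, comparing $\mcA_K$ étale-locally with the wedge local model $\mbM^\wedge_{(r,s)}$, and then to invoke Theorem \ref{thm:Pappas-rs} together with Theorem \ref{thm:coherent}.

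\emph{Representability.} First, representability of $\mcA_K$ by a Deligne--Mumford stack (quasi-projective over $\Spec O_F$ when $K^p$ is neat) is standard, as in \cite{RZ96}. The Kottwitz and wedge conditions are closed conditions on the Lie algebra. Hence $\mcA_K$ is a closed substack of the usual PEL moduli stack of polarized abelian schemes with $O_F$-action and $K^p$-level structure, and that stack is representable.

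\emph{Local model diagram.} Next I construct the diagram
$$
\mcA_K\overset{\varphi}{\longleftarrow}\wt{\mcA}_K\overset{\psi}{\lr}\mbM^{\wedge}_{(r,s)}.
$$
Here $\wt{\mcA}_K$ parametrizes points of $\mcA_K$ together with an $O_F\otimes R$-linear isomorphism
$$
H^{\mathrm{dR}}_1(A/R)\simeq \Lambda\otimes_{O_{F_0}}R
$$
respecting the pairings up to a unit. Such trivializations exist Zariski locally, using that $\Lambda$ is self-dual and the polarization is principal, so the de Rham homology is locally free over $O_F\otimes R$ of rank $n$ with a perfect pairing. The map $\varphi$ is then a torsor under the smooth group scheme $\mcG=\underline{\mathrm{Aut}}(\Lambda,\langle-,-\rangle)$ up to similitude. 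The map $\psi$ sends the data to the Hodge filtration $\omega_{A^\vee}\subset H_1^{\mathrm{dR}}$. The quotient of $H_1^{\mathrm{dR}}$ by $\omega_{A^\vee}$ is $\Lie(A)$, and the image of $\psi$ lies in $\mbM^\wedge_{(r,s)}$ by the following translation of the conditions:
\begin{itemize}
\item isotropy of the Hodge filtration gives LM 2;
\item the Kottwitz and wedge conditions on $\Lie(A)$ correspond to LM 3 and LM 4 on $\mcF$, with $r$ and $s$ exchanged because we pass to the quotient.
\end{itemize}
By Grothendieck--Messing theory, $\psi$ is formally smooth of the same relative dimension as $\varphi$. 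Consequently $\mcA_K$ and $\mbM^\wedge_{(r,s)}$ are étale locally isomorphic, as in \cite[Ch.~3]{RZ96}.

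\emph{Conclusion.} Under $p>2$ and $p\geq 2s-1$, Theorem \ref{thm:Pappas-rs} gives $\mbM^\wedge_{(r,s)}=\mbM^{\loc}_{(r,s)}$. By Theorem \ref{thm:coherent}(1) this scheme is flat over $O_F$ and normal. Flatness and normality are étale-local properties and are preserved under smooth morphisms in both directions, so $\mcA_K$ is flat and normal. On the generic fiber the Kottwitz condition alone recovers the classical PEL moduli problem. By Kottwitz this moduli problem is a finite disjoint union of copies of $\Sh_K(\bfG,h)_F$, indexed by the failure of the Hasse principle; for an imaginary quadratic field in the unitary case this failure is trivial, or one restricts to the appropriate open and closed component. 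Hence $\mcA_K$ is an integral model of $\Sh_K(\bfG,h)_F$.

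\emph{Main obstacle.} The delicate step is the local model diagram itself. It requires the local freeness of $H_1^{\mathrm{dR}}$ as an $O_F\otimes R$-module, which uses self-duality at $p$ in the ramified setting, and the Grothendieck--Messing argument that lifts of the Hodge filtration compatible with the isotropy, Kottwitz and wedge conditions correspond exactly to deformations of $(A,\iota,\ov\lambda)$. I would follow \cite{RZ96} and \cite{Pappas2000}, where these points are verified for the ramified self-dual case. The argument there is insensitive to the signature, since the extra conditions are imposed only on the filtration.
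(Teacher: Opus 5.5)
Your proposal is correct and follows essentially the same route as the paper. Representability comes from \cite{RZ96} and \cite{Pappas2000}; the local model diagram has a $\Stab(\Lambda)$-torsor $\wt{\mcA}_K\to\mcA_K$ and a smooth map $\wt{\mcA}_K\to\mbM^\wedge_{(r,s)}$; then Theorem \ref{thm:Pappas-rs} gives flatness and Theorem \ref{thm:coherent}(1) gives normality.

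Two small corrections. Flatness of $\mbM^{\loc}_{(r,s)}$ holds by its definition as the scheme-theoretic closure of the generic fiber, since Theorem \ref{thm:coherent}(1) only supplies normality. Also, the exchange of $r$ and $s$ in the wedge condition does not follow merely from passing to the quotient: it comes from identifying $\Lie(A)$ with the dual of the filtration via the perfect pairing, under which $\iota(\pi)$ acts as minus the adjoint of $\pi\otimes 1$.
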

\begin{proof}
The representability of $\mcA_K$ follows from \cite{RZ96} and \cite{Pappas2000}. Furthermore, we have the following local model diagram
\begin{equation*}
\begin{aligned}
\xymatrix{
&\wt{\mcA}_K\ar[dl]_-{p}\ar[dr]^-{q}&\\
\mcA_K&&\mbM_{(r,s)}^{\wedge}.
}
\end{aligned}
\end{equation*}
Here $p$ is a $\Stab(\Lambda)$-torsor, where $\Stab(\Lambda)$ is the quasi-parahoric group scheme associated with $K_p$. The map $q$ is smooth of relative dimension $\dim \Stab(\Lambda)$. The flatness then follows from Theorem \ref{thm:Pappas-rs}, and normality follows from Theorem \ref{thm:coherent}(1).
\end{proof}

\bibliographystyle{alpha}
\bibliography{main}
		
\end{document}